\documentclass[12pt,leqno]{amsart}
\usepackage{amsmath, amssymb, amscd, amsfonts,   stmaryrd, turnstile, mathrsfs, color}

\definecolor{dullmagenta}{rgb}{0.4,0,0.4}

\definecolor{darkblue}{rgb}{0,0,0.4}

\usepackage[colorlinks=true, pdfstartview=FitV, linkcolor=red, citecolor=blue, urlcolor=darkblue]
{hyperref}

\newtheorem{theorem}{Theorem}[section]
\newtheorem{lemma}[theorem]{Lemma}
\newtheorem{proposition}[theorem]{Proposition}
\newtheorem{corollary}[theorem]{Corollary}

\theoremstyle{definition}
\newtheorem{definition}[theorem]{Definition}
\newtheorem{example}[theorem]{Example}
\newtheorem{remark}[theorem]{Remark}

\begin{document}

\title[Computing the closure of a support]{Computing the closure of a support}

\author[G. Picavet and M. Picavet]{Gabriel Picavet and Martine Picavet-L'Hermitte}

\address{Math\'ematiques, 8 Rue du Forez, 63670 Le Cendre, France}

\email{picavet.mathu(at) orange.fr}
\email{}
\begin{abstract}

When $E$ is an $R$-module over a commutative unital ring $R$, the Zariski closure of its support is of the form $\mathrm V(\mathcal O(E))$ where $\mathcal O(E)$ is a unique radical ideal. We give an explicit form of $\mathcal O(E)$ and study its behavior under various operations of algebra. Applications are given, in particular for ring extensions of commutative unital rings whose supports are closed. We provide some applications to crucial and critical ideals of ring extensions.
\end{abstract} 

\subjclass[2010] {Primary:13B02,13B30,13C99,13E15}

\keywords  {support, almost finitely generated, constructible topology, constructible support, (weak) assassinator, crucial ideal, critical ideal}

\maketitle

\section{Introduction}
We consider modules $E$ over  commutative unital rings
$R$ and  (ring) extensions of commutative unital rings. 
 
Let $\mathcal P$ be a property of modules or ring extensions. In order to measure the obstruction to $\mathcal P$, with the help of an ideal, some commutative algebraists have found useful to consider, in some contexts, localizations $E_a =E\otimes_RR_a$ of an $R$-module $E$, with respect to the multiplicatively closed subsets $\{a^n \mid n\in \mathbb N\}$, where $a\in R$. They then consider the set of all elements $a\in R$, such that $E_a$ verifies $\mathcal P$, instead of the subset of $\mathrm{Spec}(R)$, whose elements $P$  are such that $E_P$ does not verify $\mathcal P$. In this paper, the property involved is the zero property, as did S. Oda in a paper devoted to birational extensions of commutative integral domains \cite{O}. This is developed in the next sections, where we introduce the Oda ideal of modules and of ring extensions, which allows us to compute the closure of a support and other various open subsets of a spectrum, associated to ring morphisms, especially injective flat epimorphisms. We also examine the behavior of the Oda ideal with respect to different algebraic operations.
We then give an interpretation of crucial or critical ideals associated to a ring extension.
 The case of  Nagata extensions is examined.

 We first give some notation and recalls. 

(1) The annihilator of an $R$-module $E$ is denoted by $0:_RE$ or $0:E$. If $R\subseteq S$ is a (ring) extension, $0:_R (S/R)$ is the conductor $(R:S)$ of the extension, the greatest ideal shared by $R$ and $S$. 
 
(2) We say that an $R$-module $E$ is almost finitely generated (afg) over $R$ if there is a ring morphism $R \to S$ such that $E$ is an $S$-module of finite type, inducing on $E$ the original structure of $R$-module. A ring extension $R \subseteq S$ is said (module) finite if the $R$-module $S$ is of finite type and of finite type if $S$ is an $R$-algebra of finite type. 

(3) In this paper, a compact topological space does not need to be separated (Hausdorff).
 If $R$ is a ring and $I$ an ideal of $R$, we denote by $\mathrm V(I)$ the subset $\{P\in \mathrm{Spec}(R) \mid I \subseteq P\}$ of $\mathrm{Spec}(R)$ and by $\mathrm D(I)$ its complement. We recall that the spectrum $\mathrm{Spec}(R)$ of a ring $R$ is a topological space whose closed subsets are the sets $\mathrm V(I)$ where $I$ is an ideal of $R$. As usual this topology is called the Zariski topology. Note that a closed set $\mathrm V(I)$ is determined by $\sqrt I$, because $\mathrm V(I) =\mathrm V(\sqrt I)$ and  $\mathrm V (I) = \mathrm V (J)$ if and only if $\sqrt I = \sqrt J$. This remark justifies the title of the paper. We will use the following result: if $X$ is a subset of $\mathrm{Spec}(R)$, its Zariski closure $\overline X$ is $\mathrm V(\cap [P \mid P\in X])$. A useful topology on $\mathrm{Spec}(R)$ is the opposite topology, introduced by M. Hochster, called the o-topology, also termed the flat topology. Its closed sets are the spectral images of flat ring morphisms $R \to S$ and a basis of its open subsets is the set of all $\mathrm V(I)$ where $I$ is a finitely generated ideal of $R$ \cite[Chapitre IV]{FLAT}. We will only need the following facts: the set $\mathrm V(I)$ is open in the flat topology if $I$ is an ideal of finite type and the flat topology is compact. 

There is another topology on $\mathrm{Spec}(R)$; that is, the constructible topology. Its closed sets are the pro-constructible subsets of $\mathrm{Spec}(R)$, that are the  spectral images of ring morphisms $R\to T$. This topology is compact and separated (see \cite[7.2.11, p.331]{EGA}) and finer than the Zariski topology and the flat topology, because if $I$ is an ideal of $R$, $\mathrm V(I) $ is the spectral image of $\mathrm{Spec}(R/I)$ and a closed set in the flat topology is a spectral image \cite[Chapitre IV]{FLAT}.
If $f: R \to T$ is a ring morphism, we denote by ${}^af$ the natural map $\mathrm{Spec}(T) \to \mathrm{Spec}(R)$. This map is continuous for all the above topologies and we set $\mathrm{Spec}(T,R):= {}^af(\mathrm{Spec}(T))$.
Note that if $Y\subseteq \mathrm{Spec}(T)$ is pro-constructible, so is ${}^af(Y)$.

(4) We denote by $\mathrm{Supp}_R(E):= \{P\in\mathrm {Spec}(R)\mid E_P\neq 0\}$, (also denoted by $\mathrm{Supp}(E)$) the support of an $R$-module $E$. 
 It is known that $\mathrm{Supp}_R(E)$ is stable under specialization; that is, if $P\subseteq Q$ are in $\mathrm{Spec}(R)$ and $P\in \mathrm{Supp}_R(E)$, then $Q\in \mathrm{Supp}_R(E)$.
  Let $X\subseteq \mathrm{Spec}(R)$. The specialization of $X$ is denoted by $X^{\uparrow}$ and we have $X^{\uparrow}:=\{Q\in\mathrm{Spec}(R)\mid P\subseteq Q$ for some $P\in X\}$. 
 
 If $E$ is an $R$-module of finite type, it is also well known that $\mathrm{Supp}_R(E)$
 
 \noindent $ = \mathrm V(0:_RE)$. Moreover, if $E$ is of finite presentation, 
 $\mathrm{Supp}_R(E)=\mathrm V (I)$ where $I$ is a finitely generated ideal of $R$. Let $\{x_1,\ldots,x_n\}$ be a finite set of generators of $I$. We get that $\mathrm{Spec}(R)\setminus\mathrm V(I)=\cup_{i=1}^n\mathrm D (x_i)$, so that $\mathrm{Spec}(R)\setminus \mathrm V(I)$ is compact and $\mathrm{Supp}_R(E)$ is open in the flat topology. 
 
(5) We will need the notion of base change. Let $R\to S$ and $R\to T$ be ring morphisms. The ring morphism $T \to T\otimes_R S$ is called the ring morphism deduced from $R\to S$ by the base change $R\to T$. Now a property of ring morphism is called universal if it is stable under any base change.

(6) The category of commutative unital rings has epimorphisms that are not necessarily surjective, for example localizations $R \to R_\Sigma$ of a ring $R$ with respect to a multiplicatively closed subset $\Sigma$. Actually, these epimorphisms are flat epimorphisms.

(7) We recall the Stokes formula \cite[4.3]{S}: if $E$ and $M$ are $R$-modules and $M$ is of finite type, then $M\otimes_R E = 0$ if and only if  $E = (0:M)E$.

(8) $\mathrm{Nil}(R)$ denotes the set of all nilpotents elements of a ring $R$.
\section{Oda ideals}
 
Let $E$ be an $R$-module. If $P$ is an element of $\mathrm{Spec}(R)$, then $P \in \mathrm{Supp}(E)$ if and only if there is some $x \in E$; such that $0:x \subseteq P$. It follows that $\mathrm{Supp}(E) =\cup [\mathrm V(0:x) \mid x \in E]$, whence $\mathrm{Supp}(E)$ is a union of Zariski closed sets of $\mathrm{Spec}(R)$ and is not in general a closed set. For example, if $X$ is a non-closed subset of $\mathrm{Spec}(\mathbb Z)$ and $M = \oplus [\mathbb Z/P \mid P\in X]$, then the support of $M$ is $X$ 
 \cite[Corollaire, p. 133]{B1}.

We begin with the calculation of the Zariski closure of the support of a module. We recall that the (weak)assassinator $\mathrm{Ass}_R(E)$ of an $R$-module $E$ is the set of prime ideals (associated prime ideals) $P$ of $R$ that are minimal in the set of all prime ideals containing the annihilator of some $x\in E$ \cite[D\'efinition 1.1, p. 92]{L}.
  
   We will drop the suffix $R$ if no confusion can happen. 
\begin{lemma}\label{ass} Let $E$ be an $R$-module, then $\overline{\mathrm{Supp}_R(E)} =\overline{\mathrm{Ass}_R(E)}$ and $\mathrm{Supp}_R(E) =\cup [\mathrm V(P) \mid P\in \mathrm{Ass}_R(E)]\supseteq \mathrm{Ass}_R(E)$.
  \end{lemma}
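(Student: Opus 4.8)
The plan is to establish the set-theoretic equality $\mathrm{Supp}_R(E) = \cup[\mathrm V(P) \mid P \in \mathrm{Ass}_R(E)]$ first, and then to deduce the statement about closures as a formal consequence. The inclusion $\mathrm{Supp}_R(E) \supseteq \mathrm{Ass}_R(E)$ is then immediate, since each $P$ lies in $\mathrm V(P)$.

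For the main equality I would start from the identity $\mathrm{Supp}_R(E) = \cup[\mathrm V(0:x) \mid x \in E]$ recorded just before the statement. The key step is to decompose each closed set $\mathrm V(0:x)$ as the union of the sets $\mathrm V(P)$ as $P$ ranges over the primes minimal over $0:x$. The inclusion $\supseteq$ here is trivial. For $\subseteq$, given a prime $Q \supseteq 0:x$, one invokes the standard Zorn's lemma argument: the set of primes lying between $0:x$ and $Q$ has a minimal element $P$ (a descending chain of primes has prime intersection), and this $P$ is minimal over $0:x$ with $P \subseteq Q$, so $Q \in \mathrm V(P)$. Each such $P$ belongs to $\mathrm{Ass}_R(E)$ by the very definition of the weak assassinator. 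Taking the union over all $x \in E$ yields $\mathrm{Supp}_R(E) \subseteq \cup[\mathrm V(P) \mid P \in \mathrm{Ass}_R(E)]$. The reverse inclusion uses that each $P \in \mathrm{Ass}_R(E)$ contains some $0:x$, hence lies in $\mathrm{Supp}_R(E)$, and then $\mathrm V(P) \subseteq \mathrm{Supp}_R(E)$ by the stability of the support under specialization.

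For the closure statement, since $\mathrm{Ass}_R(E) \subseteq \mathrm{Supp}_R(E)$, the inclusion $\overline{\mathrm{Ass}_R(E)} \subseteq \overline{\mathrm{Supp}_R(E)}$ is automatic. For the reverse I would use the formula $\overline X = \mathrm V(\cap[P \mid P\in X])$ recalled in the introduction, together with the fact that the intersection of all primes in $\mathrm V(P)$ equals $P$ itself (as $P$ is prime). From $\mathrm{Supp}_R(E) = \cup[\mathrm V(P) \mid P \in \mathrm{Ass}_R(E)]$ one then computes $\cap[Q \mid Q \in \mathrm{Supp}_R(E)] = \cap[P \mid P \in \mathrm{Ass}_R(E)]$, so the two closures are defined by the same ideal and hence coincide. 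Equivalently, one may argue purely topologically: $\mathrm V(P) = \overline{\{P\}}$, and the closure of a union of closures equals the closure of the union, giving $\overline{\mathrm{Supp}_R(E)} = \overline{\cup_P \mathrm V(P)} = \overline{\mathrm{Ass}_R(E)}$ directly.

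I expect the only real obstacle to be the decomposition step for $\mathrm V(0:x)$, namely ensuring that every prime over the annihilator dominates a minimal one; everything else is bookkeeping with the already-cited support identity, specialization stability, and the closure formula.
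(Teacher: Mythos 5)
Your proof is correct, and its skeleton matches the paper's: both rest on the identity that $\mathrm{Supp}_R(E)$ is the specialization of $\mathrm{Ass}_R(E)$, i.e. $\mathrm{Supp}_R(E)=\cup[\mathrm V(P)\mid P\in\mathrm{Ass}_R(E)]$, and both then pass to closures via the formula $\overline X=\mathrm V(\cap[P\mid P\in X])$. The difference lies in how that identity is obtained: the paper simply cites Lazard (Propri\'et\'e 1.8, p.~93 of \cite{L}), whereas you prove it from scratch --- starting from $\mathrm{Supp}_R(E)=\cup[\mathrm V(0:x)\mid x\in E]$, decomposing each $\mathrm V(0:x)$ into the sets $\mathrm V(P)$ with $P$ minimal over $0:x$ via the Zorn's lemma fact that a descending chain of primes has prime intersection, and recognizing each such minimal $P$ as weakly associated by the very definition of the weak assassinator. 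This is exactly the content of Lazard's Propri\'et\'e 1.8, so your argument makes the lemma self-contained at the cost of a few extra lines, while the paper's version is shorter but reference-dependent; your reading of the (slightly ambiguously phrased) definition of $\mathrm{Ass}_R(E)$ as ``$P$ minimal over $0:x$ for some $x$'' is indeed the intended one (Lazard's D\'efinition 1.1). Your alternative purely topological route to the closure equality, using $\mathrm V(P)=\overline{\{P\}}$ and the fact that the closure of a union of closures is the closure of the union, is also valid and even bypasses the closure formula, which the paper relies on. One harmless point you glossed over: for $x=0$ one has $0:x=R$ and $\mathrm V(R)=\emptyset$, so that term contributes nothing to the union.
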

  
\begin{proof} Observe that $\mathrm{Supp}(E)$ is the specialization of $\mathrm{Ass}(E)$ \cite [Propri\'et\'e 1.8, p.93]{L}, {\it i.e.} a prime ideal $P$ of $R$ belongs to $\mathrm{Supp}(E)$ if and only if it contains some element of $\mathrm{Ass}(E)$. Now we know that for a subset $X$ of $\mathrm{Spec}(R)$, we have $\overline X =\mathrm{V}( \cap [P \mid P\in X])$. The conclusion follows.
  \end{proof}
  
 \begin{proposition}\label{clo} Let $E$ be an $R$-module, then :
 
\centerline{$\overline{\mathrm{Supp}_R(E)}=\mathrm V(\cap[\sqrt{0:_R x}\mid x\in E]) \subseteq \mathrm V(0:_RE)$.}
 \end{proposition}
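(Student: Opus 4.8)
The plan is to compute the closure directly from the two facts recalled just before the statement: the pointwise description $\mathrm{Supp}_R(E)=\cup[\mathrm V(0:_Rx)\mid x\in E]$ and the formula $\overline X=\mathrm V(\cap[P\mid P\in X])$, valid for any subset $X\subseteq\mathrm{Spec}(R)$. Applying the latter to $X=\mathrm{Supp}_R(E)$ reduces the equality to be proved to the identification of the radical ideal $\cap[P\mid P\in\mathrm{Supp}_R(E)]$ with $\cap[\sqrt{0:_Rx}\mid x\in E]$.

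The crux is a purely formal identity on intersections of primes. For each $x\in E$ one has $\sqrt{0:_Rx}=\cap[P\mid P\in\mathrm V(0:_Rx)]$, since the radical of an ideal is the intersection of the primes containing it. Intersecting over all $x\in E$ and interchanging the order of the two intersections gives
\[
\cap[\sqrt{0:_Rx}\mid x\in E]=\cap\bigl[\,\cap[P\mid P\in\mathrm V(0:_Rx)]\ \bigm|\ x\in E\,\bigr]=\cap[P\mid P\in\cup[\mathrm V(0:_Rx)\mid x\in E]],
\]
and by the pointwise description of the support the last term is exactly $\cap[P\mid P\in\mathrm{Supp}_R(E)]$. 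Feeding this into the closure formula yields $\overline{\mathrm{Supp}_R(E)}=\mathrm V(\cap[\sqrt{0:_Rx}\mid x\in E])$, the asserted equality. (Alternatively, one could route through Lemma \ref{ass} and intersect the minimal primes over each $0:_Rx$, but the direct computation avoids invoking the assassinator.)

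For the inclusion it remains to compare the two ideals defining the closed sets. Since an element annihilates $E$ if and only if it annihilates every $x\in E$, one has $0:_RE=\cap[0:_Rx\mid x\in E]$; as $0:_Rx\subseteq\sqrt{0:_Rx}$ for each $x$, intersecting over $x$ gives $0:_RE\subseteq\cap[\sqrt{0:_Rx}\mid x\in E]$, and because $\mathrm V$ reverses inclusions this forces $\mathrm V(\cap[\sqrt{0:_Rx}\mid x\in E])\subseteq\mathrm V(0:_RE)$. I do not expect a genuine obstacle: the only step demanding care is the union/intersection interchange in the key identity, where one must keep track of the fact that a single prime $P$ may arise from several $x$, and note that both ideals appearing in the displayed equality are already radical, so that no further saturation is needed.
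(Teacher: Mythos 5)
Your proof is correct, but it takes a genuinely different route from the paper. The paper goes through the (weak) assassinator: by Lemma \ref{ass}, $\overline{\mathrm{Supp}_R(E)}=\mathrm V(\cap[P\mid P\in\mathrm{Ass}_R(E)])$, and then it identifies $\cap[P\mid P\in\mathrm{Ass}_R(E)]$ with $\cap[\sqrt{0:_Rx}\mid x\in E]$ by citing Lazard's result that this intersection of associated primes consists exactly of the elements $a\in R$ acting pointwise nilpotently on $E$. You instead avoid the assassinator entirely (as you note yourself) and run a purely formal computation: $\mathrm{Supp}_R(E)=\cup[\mathrm V(0:_Rx)\mid x\in E]$, the closure formula $\overline X=\mathrm V(\cap[P\mid P\in X])$, the identity $\sqrt{I}=\cap[P\mid P\in\mathrm V(I)]$, and the interchange of an intersection over $x$ with an intersection over primes, which is the standard fact that intersecting over a union of index sets equals the iterated intersection --- your step here is sound. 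What your approach buys is self-containedness: every ingredient is either standard or already stated in the paper's introduction, with no appeal to the external reference on assassinators. What the paper's approach buys is conceptual continuity: routing through $\mathrm{Ass}_R(E)$ makes the closure of the support visibly controlled by the assassinator, a theme reused immediately afterwards (the identification $\mathcal O(E)=\mathrm{Nil}(E)=\cap[P\mid P\in\mathrm{Ass}_R(E)]$, Proposition \ref{FLATOPO}, and the results on direct images of $\mathrm{Ass}$). A minor point in your favor: you explicitly verify the final inclusion $\mathrm V(\cap[\sqrt{0:_Rx}\mid x\in E])\subseteq\mathrm V(0:_RE)$ via $0:_RE=\cap[0:_Rx\mid x\in E]$, which the paper's proof leaves implicit.
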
 
 
 \begin{proof}  
 Lemma~\ref{ass} implies that  $\overline{\mathrm{Supp}_R(E)} =\mathrm V (\cap[ P\mid P\in \mathrm{Ass}(E)])$. 
To show the  equation, we introduce the ideal $I := \cap [P\mid P\in \mathrm{Ass}(E)]$. To conclude, we need only to prove that $I= \cap [\sqrt{0:_R x} \mid x\in E]$. This is a consequence of \cite[Propri\'et\'e 1.4, p.92]{L}, which asserts that $a\in R$ belongs to $I$ if and only if the map $E\to E$ defined by $x \mapsto ax$,  is pointwise nilpotent. 
 \end{proof}
  
We now introduce an ideal linked to the annihilator of an $R$-module $M$, first defined by Oda for some ring extensions \cite{O}. We generalize Oda's definition to modules over  arbitrary rings, with a slight change.

\begin{definition} The Oda ideal $\mathcal O_R (E)$ (or $\mathcal O (E)$) of an $R$-module $E$ is the set of all $a\in R$ such that $E_a=~0$. 
Consequently, $E_a=0$ if and only if $\mathrm{Ass}_R(E)\subseteq \mathrm V(a)$ \cite[Propri\'et\'es 1.2 and 1.6, p.92]{L}.

It  is easy to check that 

\centerline{$\mathcal O(E) = \cap [\sqrt{0:_Rx} \mid x \in E] = \{a \in R \mid \forall x \in E \, \exists n \in \mathbb N  \,a^nx =0\}.$}
 It follows that  $\mathcal O (E)$ is a radical ideal containing  $0:_RE$ and $\mathrm{Nil}(R)$.  Moreover, if $a \in \mathcal O(E)$, we have $\cap [Ra^n \mid n \in \mathbb N] \subseteq 0:_R E$. 
  In fact, setting $\mathrm{Nil}(E):=\{a\in R\mid \forall x \in E \, \exists n \in \mathbb N \,a^nx =0\}$, \cite[Propri\'et\'e 1.4, p.92]{L} asserts that $\mathrm{Nil}(E)=\cap[P \mid P\in \mathrm{Ass}_R(E)]$, so that $\mathrm{Nil}(E)= \mathcal O(E)$.

If $R\subseteq S$ is an extension, we set $\mathcal O(R,S) := \mathcal O (S/R)$; so that $\mathcal O(R,S) = \cap [\sqrt{R:s} \mid s\in S]$.

Therefore, $(R:S) \subseteq \mathcal O(R,S) = \{a\in R \mid R_a= S_a\}$.  
 We note that $\mathcal O(R,S) = R$ if and only if $R=S$. 

If $f: R\to S$ is a ring morphism, we may also consider:
$\mathcal O(f) := \{a\in R \mid R_a \to S_a $ is an isomorphism$\}$.
\end{definition}
 The following result is obvious.
\begin{lemma}If $R\subseteq S$ is a ring extension, then $\mathcal O(R,S)/(R:S) = \mathcal O(R/(R:S),S/(R:S))$.
\end{lemma}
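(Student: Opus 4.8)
The plan is to reduce the stated identity to a purely module-theoretic one and then verify the latter directly from the explicit description of the Oda ideal. Write $C := (R:S)$ for the conductor, which by definition is the annihilator $0:_R(S/R)$ of the $R$-module $E := S/R$. Since $C$ is an ideal of both $R$ and $S$, the induced map $R/C \to S/C$ is an injective ring morphism, so $R/C \subseteq S/C$ is a genuine ring extension. The third isomorphism theorem furnishes a canonical isomorphism of $R/C$-modules $(S/C)/(R/C) \cong S/R = E$, under which the $R/C$-module structure on $E$ is exactly the one obtained from its $R$-module structure via the annihilation $CE = 0$. Consequently $\mathcal O(R/C, S/C) = \mathcal O_{R/C}\bigl((S/C)/(R/C)\bigr) = \mathcal O_{R/C}(E)$, while $\mathcal O(R,S) = \mathcal O_R(E)$, so the claim becomes the identity $\mathcal O_R(E)/C = \mathcal O_{R/C}(E)$ of ideals of $R/C$.

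To prove this identity I would work with the formula $\mathcal O_R(E) = \{a \in R \mid \forall x \in E\ \exists n \in \mathbb N\ a^n x = 0\}$ recorded just above the definition, rather than with the localization description. Since $C = 0:_R E$ annihilates $E$, the action of $R$ on $E$ factors through the quotient map $\pi: R \to R/C$; writing $\bar a = \pi(a)$, we have $\bar a^{\,n} x = a^n x$ for every $x \in E$ and every $n \in \mathbb N$. Hence $a$ is pointwise nilpotent on $E$ as an element of $R$ if and only if $\bar a$ is pointwise nilpotent on $E$ as an element of $R/C$; that is, $a \in \mathcal O_R(E)$ if and only if $\bar a \in \mathcal O_{R/C}(E)$. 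This single equivalence shows at once that $\pi^{-1}(\mathcal O_{R/C}(E)) = \mathcal O_R(E)$ and, by surjectivity of $\pi$, that $\pi$ maps $\mathcal O_R(E)$ onto $\mathcal O_{R/C}(E)$.

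It then remains only to identify the kernel of the restricted map. Because $\mathcal O_R(E) \supseteq 0:_R E = C$, as noted in the definition, the restriction of $\pi$ to $\mathcal O_R(E)$ has kernel precisely $C$ and image $\mathcal O_{R/C}(E)$, whence $\mathcal O_R(E)/C = \mathcal O_{R/C}(E)$; substituting back $E = S/R$ gives the assertion. There is no serious obstacle in this argument: the only point requiring a moment's care is the compatibility of the two module structures on $E = S/R$ under the isomorphism $(S/C)/(R/C) \cong S/R$, which guarantees that pointwise nilpotence is detected identically over $R$ and over $R/C$. This is exactly why the explicit pointwise-nilpotence formula for $\mathcal O$, rather than the localization criterion $\{a \mid R_a = S_a\}$, is the convenient tool, since it makes the transfer of the defining condition across $\pi$ immediate.
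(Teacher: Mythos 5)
Your proof is correct, and it is essentially the argument the paper has in mind: the paper states this lemma without proof (calling it obvious), and the intended justification is exactly what you spell out — the conductor $C=(R:S)$ annihilates $E=S/R$, the $R/C$-module $(S/C)/(R/C)$ is canonically $S/R$, and the pointwise-nilpotence description of $\mathcal O$ transfers verbatim across the quotient map $R\to R/C$. Your care with the injectivity of $R/C\to S/C$ and with identifying the kernel of the restricted quotient map is a fine elaboration of the same route.
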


\begin{example}\label{EXA} Suppose that $R$ is an absolutely flat ring (Von Neumann regular). Then each ideal of $R$ is semiprime, because each element $x\in R$ has a quasi-inverse $x'$, such that $x^2x'= x$ and in particular $R$ is a reduced ring. Now if $E$ is an $R$-module, whence flat over $R$, we have $\mathcal O(E) =\cap [0:x \mid x \in E] =0:E$.
\end{example}

\begin{remark}\label{EXA1}  Let $R\subseteq S$ be a ring extension. For $s\in S\setminus R$, we set $\mathrm d(s) := \{r\in R \mid rs\in R\} = R:_Rs$, the so-called ``denominator ideal'' of $s$. We then have $\mathcal O(R,S) = \cap [\sqrt {\mathrm d(s)} \mid s\in S \setminus R]$. Therefore, $a\in \mathcal O (R,S)$ if and only if for each $s\in S$, there is  some integer $n \geq 0$, such that $a^ns = r\in R$. 
\end{remark} 

If $E$ is an $R$-module, its (Nagata) idealization $R (+)E$ is defined as the set $R\times E$, endowed with the usual addition and the multiplication defined by $(r,x)(s,y) = (rs, ry + sx)$. Then $R (+)E$ is a commutative ring, and $R\to R (+)E$, defined by $r\mapsto (r,0)$ is a ring extension.

We observe that if $E$ is an $R$-module with idealization $R(+) E$, then $\mathcal O(E) = \mathcal O (R, R(+)E)$, because $E\simeq (R(+)E)/R$. 

\begin{theorem}\label{cars} Let $E$ be an $R$-module, then $\overline{\mathrm{Supp}_R(E)} = \mathrm V(\mathcal O_R(E)) \subseteq \mathrm V(0:_R E)$. 
\end{theorem}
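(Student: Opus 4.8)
The plan is to deduce this directly from Proposition~\ref{clo} together with the explicit description of the Oda ideal recorded in the Definition, so that the theorem becomes essentially a repackaging of the former in terms of the newly named ideal $\mathcal O_R(E)$. Proposition~\ref{clo} already establishes $\overline{\mathrm{Supp}_R(E)} = \mathrm V(\cap[\sqrt{0:_R x}\mid x\in E])$, so the only thing left to verify is that the ideal $\cap[\sqrt{0:_R x}\mid x\in E]$ appearing there coincides with $\mathcal O_R(E)$.

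First I would recall the identity $\mathcal O(E) = \cap [\sqrt{0:_Rx} \mid x \in E]$, which was checked when the Oda ideal was introduced: $a\in\mathcal O(E)$ means $a^n x = 0$ for some $n$ and every $x\in E$, i.e. $a\in\sqrt{0:x}$ for every $x$. Substituting this equality into the formula of Proposition~\ref{clo} yields at once $\overline{\mathrm{Supp}_R(E)} = \mathrm V(\mathcal O_R(E))$.

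For the displayed inclusion I would use $0:_R E \subseteq \mathcal O_R(E)$, observed right after the Definition (any $a$ annihilating all of $E$ lies in each $\sqrt{0:x}$), together with the fact that $\mathrm V$ is inclusion-reversing on ideals; this gives $\mathrm V(\mathcal O_R(E)) \subseteq \mathrm V(0:_R E)$. Alternatively, this inclusion is already contained in the statement of Proposition~\ref{clo}.

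There is essentially no obstacle here. The only point requiring any care is to invoke the radical-annihilator description of $\mathcal O_R(E)$ rather than its localization description via $E_a = 0$, since it is the former that literally matches the ideal occurring in Proposition~\ref{clo}; the equivalence of the two descriptions of $\mathcal O_R(E)$ has already been established in the Definition.
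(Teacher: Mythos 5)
Your proposal is correct and follows exactly the paper's own argument: the paper also deduces the theorem from Proposition~\ref{clo} combined with the identity $\mathcal O(E) = \cap [\sqrt{0:_R x} \mid x\in E]$ recorded in the Definition. The extra care you take to justify the inclusion $\mathrm V(\mathcal O_R(E)) \subseteq \mathrm V(0:_R E)$ is fine but already contained in Proposition~\ref{clo}, as you note.
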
  

\begin{proof} The statement holds because of Proposition \ref{clo} and $ \mathcal O(E) = \cap [\sqrt{0:_R x} \mid x\in E]$.
\end{proof}

\begin{corollary}If $\ \mathrm{Supp}_R(E)$ is Zariski closed, then $\mathrm{Supp}_R(E) = \mathrm V(\mathcal O_R(E))$; so that, $E_P= 0$ for $P\in \mathrm{Spec}(R)$ if and only if there  is some $a\in R\setminus P$ such that $E_a=0$. 
\end{corollary}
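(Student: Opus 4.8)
The plan is to reduce everything to Theorem~\ref{cars} together with the very definition of the Oda ideal. First I would invoke Theorem~\ref{cars}, which gives $\overline{\mathrm{Supp}_R(E)} = \mathrm V(\mathcal O_R(E))$ with no hypothesis on the support. The assumption that $\mathrm{Supp}_R(E)$ is Zariski closed means precisely that this subset coincides with its own Zariski closure, so that $\mathrm{Supp}_R(E) = \overline{\mathrm{Supp}_R(E)} = \mathrm V(\mathcal O_R(E))$. This settles the first assertion at once.

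For the second assertion, I would simply transport the membership conditions through the set-theoretic complement. Fix $P \in \mathrm{Spec}(R)$. By definition of the support, $E_P = 0$ is equivalent to $P \notin \mathrm{Supp}_R(E)$; using the equality just established, this is in turn equivalent to $P \notin \mathrm V(\mathcal O_R(E))$, that is, to $\mathcal O_R(E) \not\subseteq P$. The latter says exactly that there exists some $a \in \mathcal O_R(E)$ with $a \notin P$, i.e. $a \in R \setminus P$.

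It then remains to recall what membership in the Oda ideal means. By the defining property $\mathcal O_R(E) = \{a \in R \mid E_a = 0\}$, the condition $a \in \mathcal O_R(E)$ is nothing but $E_a = 0$. Substituting this into the previous line yields that $E_P = 0$ holds if and only if there is some $a \in R \setminus P$ with $E_a = 0$, which is the claimed equivalence.

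The argument is essentially a formal chain of equivalences, so I expect no genuine obstacle. The only points requiring care are to track the complements correctly, namely that a closed set $\mathrm V(J)$ omits $P$ exactly when $J \not\subseteq P$, and to unwind the literal definition of $\mathcal O_R(E)$ rather than any of its reformulations (such as $\cap[\sqrt{0:_R x}\mid x\in E]$), so that the concluding statement ``there exists $a \in R \setminus P$ with $E_a = 0$'' emerges verbatim.
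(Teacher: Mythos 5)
Your proof is correct and follows exactly the route the paper intends: the paper states this corollary without proof precisely because it is the immediate consequence of Theorem~\ref{cars} (the support, being closed, equals its closure $\mathrm V(\mathcal O_R(E))$) combined with unwinding the definition $\mathcal O_R(E)=\{a\in R\mid E_a=0\}$ through the complement of $\mathrm V(\mathcal O_R(E))$. Your chain of equivalences is exactly this argument, carefully written out.
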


\begin{remark}\label{cars1} Let $E$ be an $R$-module. 

(1) The above Theorem implies that $\mathcal O_R(E)=\mathrm{Nil}(R)$ if and only if $\mathrm{Supp}(E)$ is Zariski dense in $\mathrm{Spec}(R)$.

(2) We now examine when a support is the whole spectrum. Obviously, $\mathrm{Nil}(R)\subseteq\sqrt{0:_RE}$.    
  
Assume that $\mathrm{Supp}_R(E)=\mathrm{Spec}(R)$. If $\mathrm{Nil}(R)\subset\sqrt{0:_RE}$, there exists some $a\in\sqrt{0:_RE}\setminus\mathrm{Nil}(R)$, and then some $P\in\mathrm{Spec}(R)$ such that $a\not\in P$ and some integer $n$ such that $a^n\in 0:_RE$. It follows that $a^nx=0$ for any $x\in E$, so that $x/1=0$ in $E_P$, leading to $E_P=0$, that is $P\not\in\mathrm{Supp}_R(E)$, a contradiction. To conclude, $\mathrm{Nil}(R)=\sqrt{0:_RE}$.

The converse holds when $E$ is afg. In this case, there is a ring morphism $R\to S$ such that $E$ is an $S$-module with a finite set of generators $\{x_1,\ldots,x_n\}$. Then, $0:_RE=\cap [0:_R x_i\mid i\in\{1,\ldots,n\}]$ which implies $\sqrt{0:_RE}=\cap[\sqrt{0:_R x_i}\mid i\in\{1,\ldots,n\}]$. Assume that $\mathrm{Nil}(R)=\sqrt{0:_RE}$. It follows that any minimal prime ideal $P$ of $R$ contains $\sqrt{0:_R x_i}$ and also $0:_R x_i$ for some $i\in\{1,\ldots,n\}$. This means that $P\in\mathrm{Ass}_R(E)\subseteq\mathrm{Supp}_R(E)$ by Lemma \ref{ass}. Now, $\mathrm{Supp}_R(E)$ is the specialization of $\mathrm{Ass}_R(E)$ which impplies $\mathrm{Supp}_R(E)=\mathrm{Spec}(R)$.

As consequence, we get that, when $E$ is afg, then $\mathrm{Nil}(R)=\sqrt{0:_RE}$ if and only if $\mathrm{Min}(R)\subseteq \mathrm{Supp}_R(E)$. 

(3) The last section is devoted to the study of modules whose supports have only one element. 
\end{remark}

\begin{proposition}\label{FLATOPO} Let $E$ be an $R$-module, such that $\mathrm{Ass}(E)$ is compact in the flat topology. Suppose that there is a multiplicatively closed subset $S$ of $R$, such that $E_S= 0$. Then there is some $a\in S$ such that $E_a= 0$, {\it i.e.} $a \in \mathcal O (E)$. 
\end{proposition}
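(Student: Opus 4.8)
The plan is to turn the hypothesis $E_S=0$ into a covering condition on $\mathrm{Ass}(E)$ for the flat topology, and then to exploit the assumed compactness together with the fact that the multiplicatively closed set $S$ is closed under products.

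First I would note that $E_S=0$ forces every associated prime to meet $S$. Indeed, if $P\in\mathrm{Ass}(E)$ satisfied $P\cap S=\emptyset$, then $P$ would survive in $R_S$ and the localization $E_P$, being a further localization of $E_S=0$, would vanish; but $\mathrm{Ass}(E)\subseteq\mathrm{Supp}(E)$ by Lemma~\ref{ass}, so $E_P\neq 0$, a contradiction. Hence for each $P\in\mathrm{Ass}(E)$ there is some $s\in S$ with $s\in P$, that is, $P\in\mathrm V(s)$. This shows $\mathrm{Ass}(E)\subseteq\cup[\mathrm V(s)\mid s\in S]$. I would then recall from the preliminaries that $\mathrm V(s)=\mathrm V(Rs)$ is open in the flat topology, since $Rs$ is a finitely generated ideal; thus $\{\mathrm V(s)\mid s\in S\}$ is an open cover of $\mathrm{Ass}(E)$ in the flat topology. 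Since $\mathrm{Ass}(E)$ is assumed compact for this topology, I can extract a finite subcover: there are $s_1,\ldots,s_n\in S$ with $\mathrm{Ass}(E)\subseteq\mathrm V(s_1)\cup\cdots\cup\mathrm V(s_n)$.

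Finally I would set $a:=s_1\cdots s_n$. As $S$ is multiplicatively closed, $a\in S$; and since $\mathrm V(s_1)\cup\cdots\cup\mathrm V(s_n)=\mathrm V(s_1\cdots s_n)=\mathrm V(a)$, we obtain $\mathrm{Ass}(E)\subseteq\mathrm V(a)$. By the criterion recorded just after the Definition, the vanishing $E_a=0$ is equivalent to $\mathrm{Ass}_R(E)\subseteq\mathrm V(a)$, so $E_a=0$, that is, $a\in\mathcal O(E)$, as desired.

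The main obstacle, and really the only subtle point, is the passage from the module-theoretic hypothesis $E_S=0$ to a genuine open cover: one must observe both that every member of $\mathrm{Ass}(E)$ meets $S$ and that the principal closed sets $\mathrm V(s)$ are flat-open, which is precisely what makes the compactness hypothesis usable. Once the cover is in place, remaining inside $S$ is guaranteed for free by the product trick $\mathrm V(s_1)\cup\cdots\cup\mathrm V(s_n)=\mathrm V(s_1\cdots s_n)$, so no further work is needed.
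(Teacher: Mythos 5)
Your proof is correct and takes essentially the same route as the paper: cover $\mathrm{Ass}(E)$ by the flat-open sets $\mathrm V(s)$, $s\in S$, extract a finite subcover by compactness, and pass to the product $a=s_1\cdots s_n\in S$ via $\mathrm V(s_1)\cup\cdots\cup\mathrm V(s_n)=\mathrm V(s_1\cdots s_n)$. The only cosmetic difference is in justifying the covering: the paper invokes Lazard's description of $\mathrm{Ass}_{R_S}(E_S)$ together with the fact that a module is zero if and only if its assassinator is empty, whereas you argue via $\mathrm{Ass}(E)\subseteq\mathrm{Supp}(E)$ and the observation that $E_P$ is a further localization of $E_S$ when $P\cap S=\emptyset$; both justifications are valid.
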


\begin{proof}To prove this, recall that for an $R$-module $M$, we have $M=0$ if and only if $\mathrm{Ass}(M)= \emptyset$. Moreover, $\mathrm{Ass}_{R_S}(E_S)$ is the set of all $P_S$ such that $P\in \mathrm{Ass}(E)$ and $P\cap S = \emptyset$, so that $E_S= 0$ if and only if $\mathrm{Ass}(E) \subseteq \cup [\mathrm V(s) \mid s \in S]$. Since each $\mathrm V(s)$ is open in the flat topology, there are finitely many elements $s_1,\cdots ,s_n \in S$, such that $\mathrm{Ass}(E) \subseteq \mathrm V (s_1\cdots s_n)$. Setting $a= s_1\cdots s_n$, we get that $E_a=0$ and $a\in S$. 
\end{proof}

\begin{proposition} \label{PROCO}$\mathrm{Supp}_R(E)$ is Zariski closed if and only if $\mathrm{Supp}_R(E)$ is pro-constructible. This holds in the following cases:
\begin{enumerate}
\item  $\mathrm{Supp}_R(E)$ is finite; 
\item $\mathrm{Ass}(E)$ is compact in the flat topology, for example if it is either pro-constructible or finite or closed;

\item $E$ has a finite length; 

\item $E$ is afg over $R$, in which case $\mathrm{V}(\mathcal O(E)) =\mathrm{Supp}_R(E)= \mathrm V(0:_RE)$ and then $\mathcal O(E) = \sqrt{0:_RE}$.
\end{enumerate} 
\end{proposition}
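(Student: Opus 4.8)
The plan is to separate the main equivalence from the verification of the four sufficient conditions. For the equivalence I would invoke the fact recalled in item (4) of the introduction that $\mathrm{Supp}_R(E)$ is always stable under specialization, together with the standard characterization of Zariski-closed subsets of a spectral space: a subset of $\mathrm{Spec}(R)$ is Zariski closed if and only if it is pro-constructible and stable under specialization (see \cite{EGA}). One implication is then immediate, since any Zariski closed set $\mathrm V(I)$ is the spectral image of $\mathrm{Spec}(R/I)$, hence pro-constructible, as already noted in the introduction. For the converse, because $\mathrm{Supp}_R(E)$ is unconditionally specialization-stable, being pro-constructible forces it to be Zariski closed. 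This disposes of the ``if and only if'' with no computation.

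Next I would treat case (2), which I expect to be the substantive one, using Proposition \ref{FLATOPO}. The nontrivial inclusion $\mathrm V(\mathcal O(E)) \subseteq \mathrm{Supp}_R(E)$ I would prove by contraposition: if $P \notin \mathrm{Supp}_R(E)$ then $E_P = 0$, so applying Proposition \ref{FLATOPO} to $S = R \setminus P$ yields $a \in R \setminus P$ with $E_a = 0$, i.e. $a \in \mathcal O(E)$, whence $\mathcal O(E) \not\subseteq P$ and $P \notin \mathrm V(\mathcal O(E))$. Combined with $\mathrm{Supp}_R(E) \subseteq \overline{\mathrm{Supp}_R(E)} = \mathrm V(\mathcal O(E))$ from Theorem \ref{cars}, this gives $\mathrm{Supp}_R(E) = \mathrm V(\mathcal O(E))$, which is closed. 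For the listed examples I would note that each forces $\mathrm{Ass}(E)$ to be flat-compact: a pro-constructible set is closed in the compact constructible topology hence constructible-compact, and since that topology is finer than the flat topology it is a fortiori flat-compact; a finite set is compact in any topology; and a (Zariski or flat) closed set is pro-constructible, reducing to the first case.

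For the remaining cases I would argue as follows. Case (1): a finite subset of $\mathrm{Spec}(R)$ is a finite union of points, which are closed in the separated constructible topology, hence pro-constructible, so the equivalence applies. Case (3): a module of finite length admits a finite filtration with quotients $R/\mathfrak m$, so $\mathrm{Supp}_R(E)$ is a finite set of maximal ideals and we reduce to case (1). Case (4) I would do directly: writing $x_1, \dots, x_n$ for generators of $E$ over $S$, one has $0:_R E = \cap_{i=1}^n (0:_R x_i)$, and if $P \in \mathrm V(0:_R E)$ with $E_P = 0$, choosing $t_i \in R \setminus P$ with $t_i x_i = 0$ and setting $t = t_1 \cdots t_n \in R \setminus P$ gives $tE = 0$, so $t \in (0:_R E) \subseteq P$, a contradiction; hence $\mathrm V(0:_R E) \subseteq \mathrm{Supp}_R(E)$, and with Theorem \ref{cars} the three sets coincide. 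Since $\mathcal O(E)$ is radical and $\mathrm V(\mathcal O(E)) = \mathrm V(0:_R E)$, it follows that $\mathcal O(E) = \sqrt{0:_R E}$.

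The main obstacle, conceptually, is the equivalence itself: everything hinges on the topological theorem that pro-constructible plus specialization-stable equals Zariski closed, without which the four cases would need ad hoc closure arguments. Computationally, the delicate point is case (2), where one must pass from the vanishing of a localization $E_P$ to a single witness $a \notin P$ with $E_a = 0$; this finiteness is exactly what Proposition \ref{FLATOPO} supplies, and it is the flat-compactness of $\mathrm{Ass}(E)$ that makes it work.
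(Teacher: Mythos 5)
Your proposal is correct and follows essentially the same route as the paper: the equivalence via specialization-stability plus the pro-constructible criterion from \cite{EGA}, case (2) by applying Proposition \ref{FLATOPO} to $R\setminus P$, case (3) by reduction to a finite set of maximal ideals, and case (4) by the same product-of-annihilators contradiction combined with Theorem \ref{cars}. The only (harmless) variations are that you spell out why finite and pro-constructible sets are flat-compact, and in (4) you deduce $\mathcal O(E)=\sqrt{0:_RE}$ from radicality of $\mathcal O(E)$ rather than from the identity $\sqrt{I\cap J}=\sqrt I\cap\sqrt J$.
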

\begin{proof}
 
Since a support is stable under specialization, it is closed when pro-constructible \cite[Corollaire 7.3.2, p.339]{EGA}. 
 
 (1) Obvious.

(2) Suppose that $\mathrm{Ass}(E)$ is compact in the flat topology and $P$ is a prime ideal of $R$. We can apply Proposition \ref{FLATOPO} to the mutiplicatively closed subset $R\setminus P$. It follows that $P$ does not belong to $\mathrm{Supp}(E)$ if and only if  there is some $a \in R\setminus P$, such that $a\in \mathcal O(E)$. Therefore, $\mathrm{Supp}(E) =\mathrm V(\mathcal O(E))$ is closed.  
 
(3) Suppose that $E$ has a finite length. There is a sequence of submodules $0 =E_0 \subset E_1 \subset \cdots \subset E_i \subset \cdots  E_{n-1} \subset E_n = E$, for some positive integer $n$, where each $E_i/E_{i-1}$ is a simple module. Since we have $\mathrm{Supp}(N) = \mathrm{Supp}(M)\cup \mathrm{Supp}(N/M)$, when $M$ is a submodule of $N$, it follows that $\mathrm{Supp} (E) =\cup[ \mathrm{Supp}(E_i/E_{i-1})\mid i= 1,\dots ,n]$ is a finite set of maximal ideals of $R$, because each module $E_i/E_{i-1}$ is  simple.
 
(4) Assume that there is a ring morphism $R\to S$, such that $E$ is an $S$-module with a finite set of generators $\{x_1,\ldots,x_n\}$. Suppose that there is a prime ideal $P$ of $R$, such that $\mathcal O(E) \subseteq P$ and $E_P= 0$. There are elements $r_1,\ldots, r_n \in R\setminus P$, such that $r_ix_i =0$. Setting $r =r_1\cdots r_n\in R \setminus P$, we get that $r \in 0:E \subseteq \mathcal O(E)\subseteq P$, a contradiction. We therefore have  $\mathrm V(\mathcal O(E)) \subseteq \mathrm{Supp}_R(E)$. The reverse inclusion follows from  Theorem~\ref{cars}. 
 Considering the  above equation $\overline{\mathrm{Supp}_R(E)} = \mathrm V (\cap [\sqrt{0:_R x} \mid x\in E])$, we can replace $\cap [\sqrt{0:_Rx} \mid x\in E]$ with $\cap [\sqrt{0:_R x_i}\mid i\in \{1,\ldots, n \}]$, which in turn is equal to $\sqrt{0:_RE}$, because $\sqrt{I\cap  J} = \sqrt I  \cap \sqrt J$ holds for two ideals $I$ and $J$ of $R$.
\end{proof}

\begin{remark} (1) Let $R$ be a ring such that $\mathrm{Ass}(R)$ is finite and $E$ a flat $R$-module. Then $\mathrm{Ass}(E)$ is finite. This follows from \cite[Proposition 2.2, p.94]{L}, because an element $P$ of $\mathrm{Ass}(E)$ is a union of elements of $\mathrm{Ass}(R)$. Therefore,  $\mathrm{Supp}_R(E)$ is Zariski closed

(2) Suppose that $\mathrm{Spec}(R)$ is Noetherian for the flat topology, then so is $\mathrm{Ass}(E)$ for any $R$-module $E$, from which we infer that $\mathrm{Ass}(E)$ is compact for the flat topology and therefore $\mathrm{Supp}(E)$ is closed 
 by Proposition \ref{PROCO}.

 We characterized the rings $R$ whose flat topology is Noetherian. They are the $g$-rings (such that for each $P\in \mathrm{Spec}(R)$, we have $\{Q \in \mathrm{Spec}(R) \mid Q \subseteq P\} = \mathrm D(f)$ for some $f \in R$ \cite[V, Proposition 4]{PG}). These rings are semi-local.
\end{remark}

 For an extension $R\subseteq S$ where $S$ is an algebra of finite type, $\mathrm{Supp}(S/R)$ is closed:
\begin{proposition}\label{algfin} Let $ R\subseteq S$ be a   ring extension, which is
an algebra of finite type, generated by $\{x_1,\ldots , x_n\}$ and $I =Sx_1+\cdots +Sx_n$.   Then:  
\centerline{$\mathrm{Supp}_R(S/R) = \mathrm{V}(\mathcal O(R,S))= \mathrm V((R:S))=\mathrm{V}(R:_R I)$}
It follows that for $P\in \mathrm{Spec}(R)$, $R_P = S_P$ if and only if there is some $a\in R\setminus P$, such that $R_a=S_a$.
 \end{proposition}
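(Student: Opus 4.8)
The plan is to prove first the purely topological identity $\mathrm{Supp}_R(S/R)=\mathrm V(\mathcal O(R,S))$ — which already yields the last assertion, since $P\notin\mathrm{Supp}_R(S/R)$ means $R_P=S_P$, while $\mathcal O(R,S)\not\subseteq P$ means $R_a=S_a$ for some $a\in R\setminus P$ — and only afterwards to match $\mathcal O(R,S)$ with the conductor. For the inclusion $\mathrm{Supp}_R(S/R)\subseteq\mathrm V(\mathcal O(R,S))$ I would simply invoke Theorem~\ref{cars}, which gives $\mathrm{Supp}_R(S/R)\subseteq\overline{\mathrm{Supp}_R(S/R)}=\mathrm V(\mathcal O(R,S))$. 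The substance is the reverse inclusion, and here is where the finite generation of the $R$-algebra $S$ is used: given $P\supseteq\mathcal O(R,S)$, I would argue by contradiction, assuming $R_P=S_P$. Each generator then satisfies $x_i/1\in S_P=R_P$, so after clearing a denominator there is $s_i\in R\setminus P$ with $s_ix_i\in R$; setting $a=s_1\cdots s_n\in R\setminus P$ gives $ax_i\in R$ for every $i$, whence $x_i=(ax_i)/a\in R_a$. Since $S=R[x_1,\ldots,x_n]$, this forces $S_a=R_a$, i.e.\ $a\in\mathcal O(R,S)\setminus P$, contradicting $\mathcal O(R,S)\subseteq P$. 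Thus $\mathrm V(\mathcal O(R,S))\subseteq\mathrm{Supp}_R(S/R)$ and equality holds; in particular the support is closed.

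Next I would record the elementary algebraic identities feeding the conductor. Because every element of $S=R[x_1,\ldots,x_n]$ is a polynomial in the $x_i$ whose constant term lies in $R$ and whose remaining monomials lie in $I=Sx_1+\cdots+Sx_n$, we have $S=R+I$, and therefore $S/R\cong I/(I\cap R)$ as $R$-modules. From $S=R+I$ one also gets, for $a\in R$, that $aS\subseteq R$ if and only if $aI\subseteq R$ (as $aR\subseteq R$ automatically), i.e.\ $(R:S)=R:_R I$; hence $\mathrm V((R:S))=\mathrm V(R:_R I)$. Combined with the always-valid inclusion $(R:S)\subseteq\mathcal O(R,S)$ coming from the definition of the Oda ideal, this yields the chain $\mathrm{Supp}_R(S/R)=\mathrm V(\mathcal O(R,S))\subseteq\mathrm V((R:S))=\mathrm V(R:_R I)$.

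The main obstacle is the remaining reverse inclusion $\mathrm V((R:S))\subseteq\mathrm V(\mathcal O(R,S))$, equivalently the identification $\mathcal O(R,S)=\sqrt{(R:S)}$. My plan here is to deduce it from Proposition~\ref{PROCO}(4) applied to the $R$-module $S/R$: if $S/R$ is afg, that result gives $\mathrm{Supp}_R(S/R)=\mathrm V(0:_R(S/R))=\mathrm V((R:S))$ together with $\mathcal O(S/R)=\sqrt{0:_R(S/R)}=\sqrt{(R:S)}$, closing all four equalities at once. The delicate point is precisely whether $S/R$ is afg. This is immediate when $S$ is a finite $R$-module — then $S/R$ is a finite $R$-module — and I expect this (module-)finiteness to be the genuine hypothesis at work in this last identification: for a merely algebra-finite extension the ideal $I$ is afg over $R$, being an $S$-module of finite type, but the quotient $I/(I\cap R)\cong S/R$ need not be, since $I\cap R$ is not an $S$-submodule of $I$; so the passage from $\mathcal O(R,S)$ down to $\sqrt{(R:S)}$ is the step I would scrutinize most carefully. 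By contrast, the first two equalities $\mathrm{Supp}_R(S/R)=\mathrm V(\mathcal O(R,S))$ and $\mathrm V((R:S))=\mathrm V(R:_R I)$, as well as the final displayed equivalence, hold for any finite-type algebra extension by the arguments above.
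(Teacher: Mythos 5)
Your treatment of the equality $\mathrm{Supp}_R(S/R)=\mathrm V(\mathcal O(R,S))$ and of the final localization statement is exactly the paper's proof: the paper likewise takes $P\supseteq\mathcal O(R,S)$ with $R_P=S_P$, clears denominators to get $r_i\in R\setminus P$ with $r_ix_i\in R$, sets $r=r_1\cdots r_n$ to force $R_r=S_r$ and the contradiction $r\in\mathcal O(R,S)\subseteq P$, and invokes Theorem~\ref{cars} for the reverse inclusion. Be aware, however, that this first equality is the \emph{only} part of the displayed chain the paper actually proves; the two equalities involving the conductor are asserted without argument. Your supplementary identity $(R:S)=R:_RI$, via $S=R+I$, is correct and fills one of those omissions.

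Your hesitation over the remaining step $\mathrm V(\mathcal O(R,S))=\mathrm V((R:S))$ is not a defect of your proof but a flaw in the statement: that equality is false for general finite-type algebra extensions, so no argument could have closed it. Take $R=\mathbb Z\subset S=\mathbb Z[1/2]$, an $R$-algebra of finite type generated by $x_1=1/2$ (a flat epimorphism of finite type). Then $R_a=S_a$ if and only if $2\mid a$, so $\mathcal O(R,S)=2\mathbb Z$ and $\mathrm{Supp}_R(S/R)=\mathrm V(\mathcal O(R,S))=\{2\mathbb Z\}$; but $(R:S)=0$, and since $1/2$ is a unit of $S$ we get $I=S$ and $R:_RI=0$ as well, whence $\mathrm V((R:S))=\mathrm V(R:_RI)=\mathrm{Spec}(\mathbb Z)$. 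The same example confirms your diagnosis of where the argument must break: here $S/R$ has annihilator $0$ but support $\{2\mathbb Z\}$, so by Proposition~\ref{PROCO}(4) it cannot be afg, even though $I$ is. In short, the proposition as stated should either be truncated after $\mathrm V(\mathcal O(R,S))$ (which is what its proof establishes and what the concluding equivalence uses), or its hypothesis strengthened from ``algebra of finite type'' to ``(module) finite,'' in which case all four sets coincide by applying Proposition~\ref{PROCO}(4) to the finitely generated $R$-module $S/R$, exactly as you proposed.
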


\begin{proof}  
Suppose that there exists $P\in \mathrm{Spec}(R)$, such that $\mathcal O(R,S) \subseteq P$ and $R_P= S_P$. There are elements $r_1,\ldots, r_n \in R\setminus P$, such that $r_ix_i \in R$. Setting $r =r_1\cdots r_n\in R \setminus P$, we get that $R_r = S_r$, so that $r \in \mathcal O(R,S)\subseteq P$, a contradiction. We therefore have $\mathrm V(\mathcal O(R,S)) \subseteq \mathrm{Supp}_R(S/R)$.
The reverse  inclusion follows from Theorem~\ref{cars}. 
\end{proof} 
 \begin{remark}A ring extension $R\subseteq S$ is called an FCP extension if the poset $([R,S], \subseteq)$ of all $R$-subalgebras of $S$ is both Artinian and Noetherian, or equivalently, each  chain of $[R,S]$ is finite  \cite{QP}. Let $ R\subseteq S$ be an FCP    extension. Then, $\mathrm{Supp}_R(S/R)=\overline{\mathrm{Ass}_R (S/R)}$ is a finite closed subset of $ \mathrm{Spec}(R)$ \cite[Proposition 4.1 (a)]{DPP3}.
\end{remark}  

Recall that if $E$ is a $T$-module and if $f:R \to T$ is a ring morphism, then $E$ is an $R$-module denoted by $E_{[R]}$. Now note that for $x\in E$, we have $0:_Rx =f^{-1}(0:_Tx)$. The next result is clear by \cite[Proposition 1.2.2, p. 196]{EGA}.

\begin{proposition} Let $E$ be a $T$-module and  $f:R \to T$  a ring morphism, then $\mathcal O_R(E) =f^{-1}(\mathcal O_T(E))$; so that,
 $\overline{ \mathrm{Supp}_R(E)} = \overline{ {}^ af(\overline{\mathrm{Supp}_T(E)})}$.
  It follows that if ${}^af$ is closed, for example if $f$ is integral, we have
  $\overline{ \mathrm{Supp}_R(E)} =  {}^ af(\overline{\mathrm{Supp}_T(E)})$.
\end{proposition}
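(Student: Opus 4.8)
The plan is to prove the ideal-theoretic identity $\mathcal O_R(E) = f^{-1}(\mathcal O_T(E))$ first, and then to deduce the two topological statements by feeding it into Theorem~\ref{cars}. For the identity, I would start from the closed-form expression $\mathcal O_R(E) = \cap [\sqrt{0:_R x} \mid x \in E]$ recorded in the definition of the Oda ideal, together with the analogous formula for $\mathcal O_T(E)$. The key elementary inputs are that the preimage $f^{-1}$ commutes with arbitrary intersections of ideals and with the radical operation, that is, $f^{-1}(\sqrt{J}) = \sqrt{f^{-1}(J)}$ for any ideal $J$ of $T$ (immediate, since $f(a)^n \in J$ if and only if $a^n \in f^{-1}(J)$), along with the observation already noted in the text that $0:_R x = f^{-1}(0:_T x)$ for every $x \in E$. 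Combining these gives $f^{-1}(\sqrt{0:_T x}) = \sqrt{f^{-1}(0:_T x)} = \sqrt{0:_R x}$, and intersecting over all $x \in E$ yields $f^{-1}(\mathcal O_T(E)) = \cap [\sqrt{0:_R x} \mid x \in E] = \mathcal O_R(E)$.

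For the first closure identity, I would use Theorem~\ref{cars} to write $\overline{\mathrm{Supp}_R(E)} = \mathrm V(\mathcal O_R(E))$ and $\overline{\mathrm{Supp}_T(E)} = \mathrm V(\mathcal O_T(E))$, and then invoke the general description of the closure of the spectral image of a closed set: for any ideal $J$ of $T$ one has $\overline{{}^af(\mathrm V(J))} = \mathrm V(f^{-1}(\sqrt{J}))$. This follows from the closure formula $\overline X = \mathrm V(\cap [P \mid P \in X])$ recalled in the introduction, applied to $X = {}^af(\mathrm V(J)) = \{f^{-1}(Q) \mid Q \in \mathrm V(J)\}$, because $\cap [f^{-1}(Q) \mid Q \supseteq J] = f^{-1}(\cap [Q \mid Q \supseteq J]) = f^{-1}(\sqrt{J})$. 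Applying this with $J = \mathcal O_T(E)$, which is already radical, gives $\overline{{}^af(\overline{\mathrm{Supp}_T(E)})} = \mathrm V(f^{-1}(\mathcal O_T(E))) = \mathrm V(\mathcal O_R(E)) = \overline{\mathrm{Supp}_R(E)}$, using the ideal identity just established.

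For the last assertion, if ${}^af$ is closed then the image ${}^af(\overline{\mathrm{Supp}_T(E)})$ of the Zariski closed set $\overline{\mathrm{Supp}_T(E)} = \mathrm V(\mathcal O_T(E))$ is itself closed, hence coincides with its closure, and the previous equality collapses to $\overline{\mathrm{Supp}_R(E)} = {}^af(\overline{\mathrm{Supp}_T(E)})$. The integral case is then covered by the classical fact that an integral ring morphism induces a closed map on spectra (via lying over and going up). None of the steps presents a genuine obstacle; the only points requiring a little care are verifying that $f^{-1}$ commutes with the radical and that $\mathcal O_T(E)$ is radical, so that $f^{-1}(\sqrt{\mathcal O_T(E)}) = f^{-1}(\mathcal O_T(E))$, both of which are already built into the framework set up earlier in this section.
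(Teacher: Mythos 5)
Your proposal is correct and follows essentially the same route as the paper: the paper's (very terse) proof consists precisely of the annihilator observation $0:_Rx = f^{-1}(0:_Tx)$ combined with the cited fact $\overline{{}^af(\mathrm V(J))} = \mathrm V(f^{-1}(J))$, which you re-derive from the closure formula $\overline X = \mathrm V(\cap [P \mid P\in X])$. You have simply written out in full the details the paper declares ``clear.''
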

 
We now examine the behavior of the associated prime ideals, with respect to the previous context.
\begin{lemma} Let $E$ be a $T$-module and  $f:R \to T$  a  flat ring epimorphism. Then we have $\mathrm{Ass}_R (E) ={}^af(\mathrm{Ass}_T(E))$.
\end{lemma}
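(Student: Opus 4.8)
The plan is to reduce the statement to a purely local criterion and then settle the two inclusions by localization. Three tools are available and suffice: the elementary identity $0:_R x = f^{-1}(0:_T x)$ for $x \in E$ (valid for any ring morphism); the localization behaviour of the weak assassinator already exploited in the proof of Proposition~\ref{FLATOPO}, namely $\mathrm{Ass}_{R_S}(E_S)=\{PR_S \mid P\in\mathrm{Ass}_R(E),\ P\cap S=\emptyset\}$; and the defining feature of a flat epimorphism that for $Q\in\mathrm{Spec}(T)$ with $P:=f^{-1}(Q)$ the induced map $R_P\to T_Q$ is an isomorphism (\cite{FLAT}), necessarily sending $PR_P$ onto $QT_Q$.

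First I would establish the local criterion: for $Q\in\mathrm{Spec}(T)$ and $P=f^{-1}(Q)$, one has $Q\in\mathrm{Ass}_T(E)$ if and only if $P\in\mathrm{Ass}_R(E)$. Applying the localization formula with $S=T\setminus Q$ and then with $S=R\setminus P$ reduces both memberships to the closed points, giving $Q\in\mathrm{Ass}_T(E)\Leftrightarrow QT_Q\in\mathrm{Ass}_{T_Q}(E\otimes_T T_Q)$ and $P\in\mathrm{Ass}_R(E)\Leftrightarrow PR_P\in\mathrm{Ass}_{R_P}(E\otimes_R R_P)$. It then remains to transport one membership to the other along $R_P\cong T_Q$. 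For this I would identify the two modules: since $E$ is a $T$-module, $E\otimes_R R_P\cong E\otimes_T(T\otimes_R R_P)$, and because $f$ is an epimorphism $T\otimes_R T\cong T$, whence $T\otimes_R R_P\cong T\otimes_R T_Q\cong T_Q$; thus $E\otimes_R R_P\cong E\otimes_T T_Q$ compatibly with $R_P\cong T_Q$. As an isomorphism of rings together with the induced isomorphism of modules carries the weak assassinator onto the weak assassinator and $PR_P$ onto $QT_Q$, the criterion follows.

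The inclusion ${}^af(\mathrm{Ass}_T(E))\subseteq\mathrm{Ass}_R(E)$ is then immediate from the criterion. For the reverse inclusion, given $P\in\mathrm{Ass}_R(E)$ I must first show that $P$ lies in the image $\mathrm{Spec}(T,R)$, so that $P=f^{-1}(Q)$ for some $Q$; the criterion then yields $Q\in\mathrm{Ass}_T(E)$ and hence $P\in{}^af(\mathrm{Ass}_T(E))$. To place $P$ in the image I would localize at $P$: there is $\xi\neq 0$ in $E_P$ with $PR_P$ minimal over $0:_{R_P}\xi$, so $\sqrt{0:_{R_P}\xi}=PR_P$. Viewing $E_P$ as a module over $T_P:=T\otimes_R R_P$, the ideal $0:_{T_P}\xi$ is proper because $\xi\neq 0$, so it is contained in some prime $Q''$ of $T_P$; by $0:_{R_P}\xi=f^{-1}(0:_{T_P}\xi)$ the contraction of $Q''$ is a prime of $R_P$ containing $0:_{R_P}\xi$, hence equal to $PR_P$. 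Thus $PR_P$ is in the image of $\mathrm{Spec}(T_P)\to\mathrm{Spec}(R_P)$, and pulling back produces $Q\in\mathrm{Spec}(T)$ with $f^{-1}(Q)=P$.

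I expect the reverse inclusion to be the main obstacle, precisely at the step forcing $P$ into $\mathrm{Spec}(T,R)$: a priori one only knows $P\in\mathrm{Supp}_R(E)$, which gives $T\otimes_R R_P\neq 0$ but not that $P$ is itself the contraction of a prime of $T$. It is the minimality built into the weak assassinator — equivalently, the fact that $\sqrt{0:_{R_P}\xi}$ becomes the maximal ideal after localizing — that pins $P$ down as a contraction. The other step to carry out with care is the identification $E\otimes_R R_P\cong E\otimes_T T_Q$ through the epimorphism property $T\otimes_R T\cong T$, since the entire transport of the weak assassinator across $R_P\cong T_Q$ rests on it.
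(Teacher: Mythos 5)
Your proof is correct, but it follows a genuinely different route from the paper's. The paper argues element by element: for each $x\in E$, the identity $0:_Rx=f^{-1}(0:_Tx)$ together with Lazard's theorem that every ideal of $T$ is extended from its contraction along a flat epimorphism gives $0:_Tx=(0:_Rx)T$, so that $R/(0:_Rx)\to T/(0:_Tx)=T\otimes_RR/(0:_Rx)$ is an injective flat epimorphism; injectivity then makes every minimal prime over $0:_Rx$ the contraction of a minimal prime over $0:_Tx$, going-down (flatness) makes every such contraction minimal, and the lemma follows by taking the union over $x\in E$. You argue prime by prime instead, resting on the other characteristic property of flat epimorphisms, the local isomorphisms $R_P\cong T_Q$, combined with Lazard's localization formula for the weak assassinator; you then add a separate argument --- whose necessity you correctly identify as the crux --- that every $P\in\mathrm{Ass}_R(E)$ lies in $\mathrm{Spec}(T,R)$. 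Notably, that membership argument uses neither flatness nor the epimorphism property (it works for any morphism $f$ and any $T$-module), much as the inclusion $\mathrm{Ass}_R(E)\subseteq{}^af(\mathrm{Ass}_T(E))$ in the paper's proof needs only injectivity of the quotient map; in both proofs the flat-epimorphism hypothesis carries the remaining direction. What each approach buys: the paper's proof is shorter and never leaves the quotient rings $R/(0:_Rx)$; yours isolates a reusable pointwise criterion, $Q\in\mathrm{Ass}_T(E)\Leftrightarrow f^{-1}(Q)\in\mathrm{Ass}_R(E)$ for flat epimorphisms, and makes explicit that the only global obstruction is membership in the spectral image. Two details to tidy: the isomorphism $R_P\to T_Q$ for flat epimorphisms should be credited to \cite[Chapitre IV]{L} rather than \cite{FLAT}; and before choosing a prime $Q''$ containing the proper ideal $0:_{T_P}\xi$, you should note that $T_P\neq 0$, which indeed follows because $E_P$ is a $T_P$-module containing $\xi\neq 0$.
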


\begin{proof} We know that for $x\in E$, we have $0:_Rx =f^{-1}(0:_Tx)$, from which we deduce that $0:_Tx = (0:_Rx)T$ by \cite[Proposition 2.1, p. 111]{L}. It follows that $R/(0:_Rx) \to T/(0:_Tx) =T\otimes_R R/(0:_Rx)$ is an injective flat epimorphism. Now it is known that if $g:A \to B$ is an injective ring morphism then $\mathrm{Min}(A) \subseteq {}^ag(\mathrm{Min}(B))$. Moreover, a flat ring morphism has Going-Down, whence minimal prime ideals are lying over minimal prime ideals.
\end{proof}

We observe the following result \cite[Lemme 3.4.4, Section I]{R}:

\begin{lemma} Let $E$ be a $T$-module and $f:R \to T$ a finite ring morphism. Then we have $\mathrm{Ass}_R (E) ={}^af(\mathrm{Ass}_T(E))$.
\end{lemma}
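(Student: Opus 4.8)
The plan is to prove the two inclusions separately, the reverse one being where the finiteness of $f$ is decisive. The tools I would lean on are all available above: for $x\in E$ one has $0:_Rx=f^{-1}(0:_Tx)$; for an injective ring morphism $g\colon A\to B$ one has $\mathrm{Min}(A)\subseteq{}^ag(\mathrm{Min}(B))$ (used in the previous lemma); and the weak assassinator commutes with localization, $\mathrm{Ass}_{R_\Sigma}(E_\Sigma)=\{PR_\Sigma\mid P\in\mathrm{Ass}_R(E),\ P\cap\Sigma=\emptyset\}$ (used in the proof of Proposition~\ref{FLATOPO}), so that $P\in\mathrm{Ass}_R(E)$ iff $PR_P\in\mathrm{Ass}_{R_P}(E_P)$. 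For the inclusion $\mathrm{Ass}_R(E)\subseteq{}^af(\mathrm{Ass}_T(E))$, which needs no finiteness, I would fix $P\in\mathrm{Ass}_R(E)$, minimal over $0:_Rx$ for some $x\in E$, and pass to the quotients $A:=R/(0:_Rx)$ and $B:=T/(0:_Tx)$. Since $0:_Rx=f^{-1}(0:_Tx)$, the induced $\bar f\colon A\to B$ is injective and module finite and $\bar P:=P/(0:_Rx)\in\mathrm{Min}(A)$; the injective-morphism fact yields $\bar Q\in\mathrm{Min}(B)$ with $\bar Q\cap A=\bar P$, and lifting $\bar Q$ to $Q\in\mathrm{Spec}(T)$ gives a prime minimal over $0:_Tx$, hence $Q\in\mathrm{Ass}_T(E)$ with $f^{-1}(Q)=P$.

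The reverse inclusion ${}^af(\mathrm{Ass}_T(E))\subseteq\mathrm{Ass}_R(E)$ is the heart of the matter. Given $Q\in\mathrm{Ass}_T(E)$, minimal over $0:_Tx$, I would use the same $A\hookrightarrow B$, noting $B\cong Tx$ as a $T$-module and $\bar Q:=Q/(0:_Tx)\in\mathrm{Min}(B)$. Because $\mathrm{Ass}_R(Tx)\subseteq\mathrm{Ass}_R(E)$ and $\mathrm{Ass}_R(Tx)$ is the image in $\mathrm{Spec}(R)$ of $\mathrm{Ass}_A(B)$, the whole inclusion reduces to the ring-theoretic claim $(\star)$: \emph{for a module-finite injective extension $A\hookrightarrow B$ and $\mathfrak q\in\mathrm{Min}(B)$, the contracted prime $\mathfrak p:=\mathfrak q\cap A$ lies in $\mathrm{Ass}_A(B)$}. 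One must resist witnessing $\mathfrak p$ with the image of $1$: the contraction of a minimal prime of $B$ need not be minimal in $A$ (already $k[t]\hookrightarrow k[t]\times k$ shows this), so a cleverly chosen element is required.

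To prove $(\star)$ I would localize at $\mathfrak p$. Since $\mathrm{Ass}$ commutes with localization, it suffices to treat $A$ local with maximal ideal $\mathfrak m$ and $B$ integral over $A$ with $\mathfrak q$ over $\mathfrak m$; after localizing, $\mathfrak q$ is simultaneously minimal (localization preserves minimality) and maximal (it lies over the maximal ideal of the local ring $A$ along an integral extension). Over a local ring a prime belongs to the weak assassinator exactly when some cyclic submodule is supported only at it, so it is enough to produce $0\neq b\in B$ with $\mathrm{Supp}_A(Ab)=\{\mathfrak m\}$. The irreducible component of $\mathrm{Spec}(B)$ attached to the minimal-and-maximal prime $\mathfrak q$ is the single point $\{\mathfrak q\}$, and it meets no other component (a minimal prime contained in $\mathfrak q$ must equal $\mathfrak q$); splitting off the associated idempotent gives $B\cong B_1\times B_2$ with $\mathrm{Spec}(B_1)=\{\mathfrak q\}$, and then any nonzero $b\in B_1$ satisfies $\mathrm{Supp}_A(Ab)={}^af(\{\mathfrak q\})=\{\mathfrak m\}$, whence $\mathfrak p\in\mathrm{Ass}_A(B)$.

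The step I expect to be the main obstacle is precisely this splitting of $\{\mathfrak q\}$ as a clopen point. That $\mathfrak q$ is both minimal and maximal makes its component a single point disjoint from all others, but clopenness — equivalently the separating idempotent — amounts to finding $g\notin\mathfrak q$ lying in every other minimal prime, i.e.\ $\bigcap\{\mathfrak p'\in\mathrm{Min}(B)\mid\mathfrak p'\neq\mathfrak q\}\not\subseteq\mathfrak q$. This is immediate when $B$ has only finitely many minimal primes (in particular in the Noetherian case) by prime avoidance, and it is here that the finiteness of $f$ must be fully exploited: through the semilocality of the fibre $B/\mathfrak m B$ and the finiteness of the residue extension $\kappa(\mathfrak q)/\kappa(\mathfrak m)$, one controls the minimal primes sitting under the finitely many maximal ideals of $B$ and secures the required separation in the non-Noetherian generality of the paper.
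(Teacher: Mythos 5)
Your overall architecture is sound, and better organized than what the paper offers (the paper does not prove this lemma at all: it simply cites \cite[Lemme 3.4.4, Section I]{R}). The easy inclusion $\mathrm{Ass}_R(E)\subseteq{}^af(\mathrm{Ass}_T(E))$ is correct as written, and your reduction of the hard inclusion to the claim $(\star)$ --- pass to $A=R/(0:_Rx)\hookrightarrow B=T/(0:_Tx)$, localize at $\mathfrak p$ so that $A$ is local with maximal ideal $\mathfrak m$ and $\mathfrak q$ is simultaneously minimal and maximal, then produce $b\neq 0$ with $\mathrm{Supp}_A(Ab)=\{\mathfrak m\}$ --- is also correct, as is the observation that an idempotent splitting $B\cong B_1\times B_2$ with $\mathrm{Spec}(B_1)=\{\mathfrak q\}$ would finish. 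The genuine gap is the last step, exactly the one you flag as the main obstacle: the separation statement $\bigcap\{\mathfrak p'\in\mathrm{Min}(B)\mid\mathfrak p'\neq\mathfrak q\}\not\subseteq\mathfrak q$ is asserted, not proved. The justification you offer cannot work as stated: semilocality of $B$ and the Artinian fibre $B/\mathfrak m B$ only control the \emph{maximal} ideals of $B$; they let you find $g\notin\mathfrak q$ lying in every other maximal ideal, but $D(g)$ may still contain non-maximal primes, in particular minimal primes of $B$ hiding inside the other maximal ideals. Nothing about the finiteness of $\kappa(\mathfrak q)/\kappa(\mathfrak m)$ bounds $\mathrm{Min}(B)$: since $A$ is not assumed Noetherian, $B$ need not be Noetherian, a finite module over a non-Noetherian local ring can have infinitely many minimal primes, and prime avoidance --- the only tool you invoke --- is then unavailable. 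So the step ``one controls the minimal primes \dots and secures the required separation'' is precisely the content of the lemma, and it is missing.

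The separation claim is in fact true, but every argument I know requires substantive input beyond semilocality. One way: base-change to the henselization $A\to A^h$. Then $B^h:=B\otimes_AA^h$ is finite over the henselian local ring $A^h$, hence a finite product of local rings; since $A^h$ has the same residue field as $A$, the fibre of $B^h$ over $\mathfrak q$ is $\kappa(\mathfrak q)$, so there is a unique prime $\mathfrak q^h$ over $\mathfrak q$, and going-down for the flat map $B\to B^h$ shows $\mathfrak q^h$ is again minimal and maximal, hence a clopen point of $\mathrm{Spec}(B^h)$ (it is the spectrum of one local factor, which is zero-dimensional). Finally $B\to B^h$ is faithfully flat, so $\mathrm{Spec}(B^h)\to\mathrm{Spec}(B)$ is submersive, and openness of $\{\mathfrak q^h\}$, the preimage of $\{\mathfrak q\}$, descends to openness of $\{\mathfrak q\}$. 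Alternatively one descends the idempotent of $B^h$ to an \'etale neighborhood and uses openness of flat maps of finite presentation. Either way, the difficulty concentrated in your final paragraph is real and is why the paper delegates the whole lemma to Raynaud; as it stands, your write-up proves strictly less than that citation.
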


 The two preceding results combine to yield.

\begin{proposition} Let $E$ be a $T$-module and $f:R \to T$ a quasi-finite ring morphism or an FCP   extension. Then we have $\mathrm{Ass}_R (E) ={}^af(\mathrm{Ass}_T(E))$.
\end{proposition}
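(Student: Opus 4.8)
The plan is to reduce both hypotheses to a single mechanism: exhibit a factorization $R \xrightarrow{g} R' \xrightarrow{h} T$ of $f$ in which $g$ is finite and $h$ is a flat epimorphism, and then simply stack the two preceding lemmas. Indeed, regarding $E$ as an $R'$-module through $h$, the flat-epimorphism lemma gives $\mathrm{Ass}_{R'}(E) = {}^ah(\mathrm{Ass}_T(E))$, while the finite-morphism lemma applied to $g$ gives $\mathrm{Ass}_R(E) = {}^ag(\mathrm{Ass}_{R'}(E))$. Since $f = h \circ g$ entails ${}^af = {}^ag \circ {}^ah$, substituting the first equality into the second yields $\mathrm{Ass}_R(E) = {}^ag({}^ah(\mathrm{Ass}_T(E))) = {}^af(\mathrm{Ass}_T(E))$, which is exactly the claim. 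Thus everything comes down to producing such a factorization in each of the two cases.

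For a quasi-finite morphism $f$, I would invoke Zariski's Main Theorem in its algebraic form: $f$ factors as $R \to R' \to T$ with $R \to R'$ finite and $\mathrm{Spec}(T) \to \mathrm{Spec}(R')$ an open immersion. An open immersion is a flat monomorphism at the level of schemes, so $R' \to T$ is a flat epimorphism of rings. Hence the factorization has the required shape $(\text{finite})$-then-$(\text{flat epimorphism})$, and the composition argument of the first paragraph applies verbatim, with $R'$ playing the role of the intermediate ring.

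For an FCP extension $R \subseteq S$ (with $E$ an $S$-module), I would let $\bar R$ denote the integral closure of $R$ in $S$ and use the factorization $R \subseteq \bar R \subseteq S$. Here $R \subseteq \bar R$ is integral and inherits the FCP property, so it is module finite; and $\bar R \subseteq S$ is an integrally closed FCP extension, which is known to be a Prüfer extension and in particular a flat epimorphism. This is once more a $(\text{finite})$-then-$(\text{flat epimorphism})$ factorization, so the same two-lemma composition closes this case as well.

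The routine part is the bookkeeping of images under ${}^ag$ and ${}^ah$ and the contravariance ${}^af = {}^ag \circ {}^ah$. The genuine content, and the step I expect to be the main obstacle, is justifying the two factorizations cleanly: namely, that the open-immersion half of Zariski's Main Theorem really yields a flat epimorphism at the ring level, so that the flat-epimorphism lemma is applicable, and that an integrally closed FCP extension is indeed Prüfer, hence a flat epimorphism. Once these two structural inputs are secured, no computation on associated primes remains, since the two preceding lemmas already do all of that work.
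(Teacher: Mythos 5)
Your proposal is correct and is essentially the paper's own proof: the paper likewise factors the morphism as a tower $R\to S\to T$ with the first map finite and the second a flat epimorphism, invoking Zariski's Main Theorem in the quasi-finite case and the structure theory of FCP extensions (integral closure splits the extension into a finite part and a Pr\"ufer, hence flat-epimorphism, part) in the FCP case, and then composes the two preceding lemmas exactly as you do. The only difference is that the paper states this in three terse sentences, while you supply the justifications it delegates to the references.
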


\begin{proof} In each case $R\to T$ is a tower $R\to S \to T$, where the first morphism is finite and the second a flat epimorphism. When $R \to T$ is quasi-finite, it is enough to apply the main theorem of Zariski \cite[Chapitre IV]{R}. For an FCP extension look at \cite{QP}. 
\end{proof}

\section{Base changes and Oda ideals}

Let $g: R\to T $ be a ring morphism. We say that $g$ verifies the condition (O) if an $R$-module $E$ is zero as soon as $E\otimes_RT = 0$. An extension $M\subseteq N$ of $R$-modules is called pure if $M\subseteq N$ remains injective under a tensorization by an arbitrary module. An extension $R\subseteq T$ is called pure if $R$ is a pure $R$-submodule of $T$ \cite{OL3}. For example, a faithfully flat morphism is a pure extension. Pure ring extensions verify the condition (O).
A finite injective ring morphism has property (O) \cite[Section 2]{RG}.

In case there is a ring morphism $R\to S$ and $E$ and $F$ are $R$-modules, there is an isomorphism $(E\otimes_RF)\otimes_RS \simeq (E\otimes_RS)\otimes_S(F\otimes_RS)$. As we are only interested in the zero property, we will not precise the base ring. The same remark holds for the associativity of the tensor product. Moreover, if $f: R\to T$ is a ring morphism and $\Sigma$ a multiplicatively closed subset of $R$, we can identify $T_{f(\Sigma)}$ with $T_\Sigma = T\otimes_RR_\Sigma$.

\begin{proposition} Let $M$ be an $R$-module,  $f : R \to T$ be a ring   base change and let the $T$-module $N: =M\otimes_RT$.

\begin{enumerate}
\item  $\mathcal O_R(M) \subseteq f^{-1}(\mathcal O_T(N))$. 

\item If in addition  $f$ is injective,  $\mathcal O_R(M) = f^{-1}(\mathcal O_T(N))$  holds if $M$ is either flat  or if $R\subseteq T$ is either pure or finite.
\end{enumerate}
\end{proposition}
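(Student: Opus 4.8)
The plan is to reduce both inclusions to a single computation of the localized module $N_{f(a)}$ for $a\in R$. First I would establish the isomorphism $N_{f(a)}\simeq M_a\otimes_R T$. Indeed, since $N=M\otimes_R T$ is a $T$-module, localizing at $f(a)$ gives $N_{f(a)}=N\otimes_T T_{f(a)}=M\otimes_R T_{f(a)}$; using the identification $T_{f(a)}=T\otimes_R R_a$ recalled just before the statement, together with the associativity and commutativity of the tensor product over $R$, this rewrites as $M\otimes_R R_a\otimes_R T=M_a\otimes_R T$. By the very definition of the Oda ideal, $a\in f^{-1}(\mathcal O_T(N))$ is equivalent to $N_{f(a)}=0$, hence to $M_a\otimes_R T=0$, while $a\in\mathcal O_R(M)$ is equivalent to $M_a=0$.

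With this translation in hand, part (1) is immediate: if $a\in\mathcal O_R(M)$ then $M_a=0$, so $M_a\otimes_R T=0$, that is $N_{f(a)}=0$ and $a\in f^{-1}(\mathcal O_T(N))$.

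For part (2) it remains to prove the reverse inclusion $f^{-1}(\mathcal O_T(N))\subseteq\mathcal O_R(M)$, which by the above amounts to the implication $M_a\otimes_R T=0\Rightarrow M_a=0$ for every $a\in R$. When $R\subseteq T$ is pure or finite (and injective), this is exactly the content of condition (O) applied to the $R$-module $E=M_a$, since pure extensions and finite injective morphisms verify (O), as recalled at the start of Section 3. When $M$ is flat, I would argue directly: $M_a=M\otimes_R R_a$ is then flat over $R$, being a tensor product of flat $R$-modules, so tensoring the injection $f:R\hookrightarrow T$ by $M_a$ yields an injection $M_a=M_a\otimes_R R\hookrightarrow M_a\otimes_R T$; the hypothesis $M_a\otimes_R T=0$ then forces $M_a=0$.

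The computation $N_{f(a)}\simeq M_a\otimes_R T$ carries essentially all the weight, and once it is in place the three cases follow from results already available; the only case needing a genuinely separate argument is the flat one, where condition (O) is not directly invoked and one instead exploits that flatness of $M_a$ preserves the injectivity of $f$. The main point to watch is the bookkeeping of base rings in the tensor and localization identities, but the paper's convention of suppressing the base ring when only the zero property is at stake makes this step routine.
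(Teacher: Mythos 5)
Your proof is correct, and it shares with the paper the same central computation: identifying the localization $N_{f(a)}$ with a tensor product involving $M_a$, so that membership in either Oda ideal becomes a vanishing statement. The difference lies in how part (2) is finished. The paper writes the isomorphism over the localized ring, $N_{f(a)}\cong M_a\otimes_{R_a}T_a$, and then disposes of the pure and finite cases in one line by remarking that purity and finiteness are \emph{universal} properties: the morphism $R_a\to T_a$ deduced from $f$ by the base change $R\to R_a$ is again pure, respectively finite, so property (O) applies over $R_a$ (the flat case, and the point that $R_a\to T_a$ stays injective because localization is exact, are left implicit). You instead keep everything over $R$, via $N_{f(a)}\simeq M_a\otimes_R T$, and apply property (O) of the original morphism $R\subseteq T$ directly to the $R$-module $E=M_a$; for the flat case you tensor the injection $R\hookrightarrow T$ with the flat $R$-module $M_a$. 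Your route buys something real: it needs no universality of purity or finiteness and no verification that injectivity survives localization, since the morphism to which (O) is applied is the one given in the hypothesis. The paper's route is more in the spirit of its base-change theme and adapts to situations where one must genuinely work over the localized ring, but as written it is terser and omits the flat case that you treat explicitly.
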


\begin{proof}
We first observe that for $a\in R$, we have $N_{f(a)} \cong M_a \otimes_{R_a} T_a$ and (1) is  clear. Now, purity and finiteness are universal properties, that is stable under any base change.
\end{proof}

D. Ferrand (in his thesis \cite{FER}) and J.P. Olivier (in \cite{OL2}) defined and studied absolutely flat ring morphisms as flat ring morphisms $R \to T$, whose co-diagonal   morphisms $T\otimes_RT \to T$ are flat. The reader may find a summary of the properties of absolutely flat morphism at the beginning of \cite{ARAB}. Flat epimorphisms are absolutely flat, because the co-diagonal morphism of an epimorphism is an isomorphism \cite[Lemme 1.10,  p.108]{L}. Etale morphisms and (strict) Henselizations of a local ring are absolutely flat. Flat separable ring extensions are absolutely flat since their co-diagonal morphisms define  projective modules. Note that absolute flatness is a universal property.

In the proof of the two following results, we use that if $f: R\to T$ is a ring morphism with spectral map ${}^af: \mathrm{Spec}(T) \to \mathrm{Spec}(R)$, $I$ is an ideal of $R$ and $J$ an ideal of $T$, then ${}^af^{-1}(\mathrm V(I)) = \mathrm V (IT)$ and $\overline{{}^af(\mathrm V(J))} = \mathrm V(f^{-1}(J))$ \cite[Proposition 1.2.2, p.196]{EGA}.
 
\begin{proposition} \label{flat}If $E$ is an $R$-module and $g: R\to T$ is a flat ring morphism, then ${}^ag^{-1}(\mathrm{Supp}_R(E)) = \mathrm{Supp}_T(E\otimes_RT)$.  In particular, if $\mathrm{Supp}_R(E)$ is closed, so is $\mathrm{Supp}_T(E\otimes_RT)$. In this case, $\mathcal O_T(E\otimes_RT)= \sqrt{\mathcal O_R(E)T}$. If in addition, $g$ is absolutely flat, then $\mathcal O_T(E\otimes_RT)= \mathcal O_R(E)T$. 
 For example, it holds when $f$ is a flat epimorphism.
\end{proposition}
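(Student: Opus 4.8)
The plan is to handle the four assertions in order, reducing everything to the interplay between flat base change, the identity $\overline{\mathrm{Supp}_R(E)} = \mathrm V(\mathcal O_R(E))$ of Theorem~\ref{cars}, and the ideal-theoretic facts ${}^ag^{-1}(\mathrm V(I)) = \mathrm V(IT)$ recalled just before the statement. First I would establish the support equality by a pointwise localization argument. Fixing $Q\in\mathrm{Spec}(T)$ and setting $P := g^{-1}(Q) = {}^ag(Q)$, flatness of $g$ makes the induced map $R_P\to T_Q$ flat and local, hence faithfully flat. Using $(E\otimes_RT)_Q \cong E\otimes_R T_Q \cong E_P\otimes_{R_P}T_Q$ together with faithful flatness, I get $(E\otimes_RT)_Q = 0$ if and only if $E_P = 0$; that is, $Q\in\mathrm{Supp}_T(E\otimes_RT)$ if and only if $P\in\mathrm{Supp}_R(E)$, which is precisely ${}^ag^{-1}(\mathrm{Supp}_R(E)) = \mathrm{Supp}_T(E\otimes_RT)$.

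Next, assuming $\mathrm{Supp}_R(E)$ is Zariski closed, it equals its own closure, so Theorem~\ref{cars} gives $\mathrm{Supp}_R(E) = \mathrm V(\mathcal O_R(E))$. The recalled identity then yields ${}^ag^{-1}(\mathrm V(\mathcal O_R(E))) = \mathrm V(\mathcal O_R(E)T)$, and combining this with the first part shows $\mathrm{Supp}_T(E\otimes_RT) = \mathrm V(\mathcal O_R(E)T)$, which is closed. For the radical computation I would invoke Theorem~\ref{cars} once more on the $T$-side: since $\mathrm{Supp}_T(E\otimes_RT)$ is closed, it equals $\mathrm V(\mathcal O_T(E\otimes_RT))$, so $\mathrm V(\mathcal O_T(E\otimes_RT)) = \mathrm V(\mathcal O_R(E)T)$. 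Because $\mathcal O_T(E\otimes_RT)$ is a radical ideal and $\mathrm V(A)=\mathrm V(B)$ forces $\sqrt A=\sqrt B$, this gives $\mathcal O_T(E\otimes_RT) = \sqrt{\mathcal O_R(E)T}$.

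Finally, for the absolutely flat case it remains only to show $\sqrt{\mathcal O_R(E)T} = \mathcal O_R(E)T$, that is, that $\mathcal O_R(E)T$ is already radical, and this is the single place where absolute flatness is used. The hard part will be this reducedness step: the key property I would draw on is that absolutely flat morphisms preserve reducedness (a fact recorded among the properties in \cite{ARAB}). Since $\mathcal O_R(E)$ is radical, $R/\mathcal O_R(E)$ is reduced, and because absolute flatness is universal the base-changed morphism $R/\mathcal O_R(E)\to (R/\mathcal O_R(E))\otimes_R T = T/\mathcal O_R(E)T$ is again absolutely flat; hence $T/\mathcal O_R(E)T$ is reduced, i.e. $\mathcal O_R(E)T$ is radical, giving $\mathcal O_T(E\otimes_RT) = \mathcal O_R(E)T$. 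The flat epimorphism case then follows immediately, since such morphisms were noted to be absolutely flat; for them one can even verify the reducedness directly from the isomorphisms $R_P\cong T_Q$ on local rings, which is reassuring as a sanity check on the general statement.
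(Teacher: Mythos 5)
Your proof is correct and takes essentially the same route as the paper's: the support equality via $(E\otimes_RT)_Q\cong E_P\otimes_{R_P}T_Q$ and faithful flatness of the local map $R_P\to T_Q$, the radical formula via Theorem~\ref{cars} and the identity ${}^ag^{-1}(\mathrm V(I))=\mathrm V(IT)$, and the absolutely flat case via Olivier's preservation of reducedness applied to $R/\mathcal O_R(E)\to T/\mathcal O_R(E)T$ using universality of absolute flatness. This matches the paper's argument step for step (the paper cites Olivier directly rather than \cite{ARAB}, but that is immaterial).
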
 
\begin{proof}

We are concerned with the $T$-module $E\otimes_RT$. The proof is a consequence of the following facts. Let $Q$ be a prime ideal of $T$, lying over $P$ in $R$. Then $R_P\to T_Q$ is faithfully flat, whence has Property (O), and $(E\otimes_RT)_Q\simeq  E\otimes_R T_Q \simeq E\otimes_R (R_P \otimes_{R_P}T_Q) \simeq E_P\otimes_{R_P}T_Q$ by the associativity of tensor products. For the last statement, when $g$ is absolutely flat, it is enough to use the following fact: if $R$ is a reduced ring, so is $T$ \cite[Corollary 2, p.51]{OL2}. It follows that if $I$ is a semi-prime ideal of $R$, then so is $IT$ in $T$, because $T\otimes_R (R/I)\simeq T/IT$ and $R/I \to T/IT$ is absolutely flat.
\end{proof}

\begin{corollary} Let $E$ be an $R$-module and $g: R\to T$ a flat ring morphism of finite presentation, as an $R$-algebra (for example, if $g$ is an injective flat epimorphism of finite type \cite[Theorem 1.1]{CR}), then ${}^ag^{-1}(\overline{\mathrm{Supp}_R(E)}) = \overline{\mathrm{Supp}_T(E\otimes_RT)}$. It follows that $\mathcal O_T(E\otimes_RT) =
\sqrt{\mathcal O_R(E)T}$. If in addition, $g$ is absolutely flat, then $\mathcal O_T(E\otimes_RT)= \mathcal O_R(E)T$.
\end{corollary}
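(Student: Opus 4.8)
The plan is to derive this Corollary from Proposition~\ref{flat} by upgrading the pointwise statement there to the level of Zariski closures, the extra input being that the finite presentation hypothesis forces the spectral map ${}^ag$ to be open. Since Proposition~\ref{flat} already provides the equality ${}^ag^{-1}(\mathrm{Supp}_R(E)) = \mathrm{Supp}_T(E\otimes_RT)$ for every flat $g$, the whole question is whether ${}^ag^{-1}$ commutes with the closure operator on the particular set $\mathrm{Supp}_R(E)$. One inclusion is free from continuity alone: the set ${}^ag^{-1}(\overline{\mathrm{Supp}_R(E)})$ is closed and contains ${}^ag^{-1}(\mathrm{Supp}_R(E)) = \mathrm{Supp}_T(E\otimes_RT)$, hence contains $\overline{\mathrm{Supp}_T(E\otimes_RT)}$. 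The reverse inclusion ${}^ag^{-1}(\overline{\mathrm{Supp}_R(E)}) \subseteq \overline{\mathrm{Supp}_T(E\otimes_RT)}$ is the substantive point and is where I expect the main obstacle to lie, since it can fail for a general flat morphism.

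To obtain it, I would first invoke the classical fact that a flat morphism of finite presentation has an open spectral map (see \cite{EGA}); this is the sole place the hypothesis on $g$ enters. I would then use the elementary topological lemma that for a continuous \emph{open} map $\phi\colon Y\to Z$ and any subset $X\subseteq Z$ one has $\phi^{-1}(\overline X) = \overline{\phi^{-1}(X)}$. Indeed, if $\phi(y)\in\overline X$ and $U$ is any open neighbourhood of $y$, then $\phi(U)$ is an open set containing $\phi(y)$, so $\phi(U)$ meets $X$ and consequently $U$ meets $\phi^{-1}(X)$; thus $y\in\overline{\phi^{-1}(X)}$. Applying this with $\phi={}^ag$ and $X=\mathrm{Supp}_R(E)$, and substituting the equality of Proposition~\ref{flat}, yields ${}^ag^{-1}(\overline{\mathrm{Supp}_R(E)}) = \overline{\mathrm{Supp}_T(E\otimes_RT)}$, which is the first assertion.

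It remains to translate this into the stated formula for the Oda ideal. By Theorem~\ref{cars} one has $\overline{\mathrm{Supp}_R(E)} = \mathrm V(\mathcal O_R(E))$ and $\overline{\mathrm{Supp}_T(E\otimes_RT)} = \mathrm V(\mathcal O_T(E\otimes_RT))$, while the identity ${}^ag^{-1}(\mathrm V(I)) = \mathrm V(IT)$ recalled before Proposition~\ref{flat} turns the left-hand side into $\mathrm V(\mathcal O_R(E)T)$. Thus the first assertion reads $\mathrm V(\mathcal O_R(E)T) = \mathrm V(\mathcal O_T(E\otimes_RT))$; since an Oda ideal is radical, taking radicals gives $\sqrt{\mathcal O_R(E)T} = \mathcal O_T(E\otimes_RT)$. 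For the last assertion, when $g$ is moreover absolutely flat I would repeat the argument closing the proof of Proposition~\ref{flat}: the ideal $\mathcal O_R(E)$ is semi-prime, the base change $R/\mathcal O_R(E)\to T/\mathcal O_R(E)T$ is again absolutely flat, and absolute flatness preserves reducedness by \cite[Corollary 2, p.51]{OL2}, so $\mathcal O_R(E)T$ is already a radical ideal and the radical becomes superfluous, giving $\mathcal O_T(E\otimes_RT) = \mathcal O_R(E)T$.
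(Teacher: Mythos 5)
Your proposal is correct and follows essentially the same route as the paper: openness of ${}^ag$ from flatness plus finite presentation (Chevalley's theorem, \cite[Proposition 7.3.10, p.341]{EGA}), the fact that preimage under an open continuous map commutes with closure (which the paper cites as \cite[2.10.1, p.70]{EGA} and you prove directly), combined with Proposition~\ref{flat}, Theorem~\ref{cars}, and the reducedness-descent argument for the absolutely flat case. The only difference is one of detail: you prove the topological lemma and spell out the passage to the Oda-ideal identities, which the paper leaves implicit.
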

\begin{proof} The spectral map of $g$ is open by the Chevalley theorem \cite[Proposition 7.3.10, p.341]{EGA}. It is enough to observe that, in this case, for any subset $X$ of $\mathrm{Spec}(R)$, we have ${}^ag^{-1}(\overline X) = \overline{{}^ag^{-1}(X)}$ \cite[2.10.1, p.70]{EGA}.
 \end{proof}

\begin{remark} In the two above results applied to $g:R \to R_\Sigma$, where $\Sigma$ is a multiplicatively closed subset of $R$, we have $\mathcal O_{R_\Sigma}(E_\Sigma) = \mathcal O_R (E)_\Sigma$.
\end{remark}

\begin{remark}\label{SUPG} The following remark may be useful. Let $f: R\to S$ be a ring morphism. We may define a ``support" $\mathrm{Supp}(f)$ as the set of all prime ideals $P$ of $R$, such that $R_P \to S_P$ is not an isomorphism. Note that if $f$ is an extension, we recover $\mathrm{Suppp}_R(S/R)$. Suppose now that $f$ is a flat epimorphism. In this case $\mathrm{Supp}(f)= \{P \in \mathrm{Spec}(R) \mid PS= S\}$ by \cite[Proposition 2.4, p.111-112]{L}. If we deal with with an extension, we get that $\mathrm{Supp}_R(S/R) = \{P \in \mathrm{Spec}(R) \mid PS= S\}$.
\end{remark}
\begin{lemma}\label{GEN} Let $f: R \to T$ be a universally generizing (going-down) ring morphism (for  example flat). Then $\mathrm{Spec}(T,R) =\{P\in \mathrm{Spec}(R) \mid PT\neq T\}$.
\end{lemma}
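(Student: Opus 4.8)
The plan is to prove the two inclusions of the asserted equality separately, noting that one of them holds for every ring morphism while the other is exactly where the going-down hypothesis enters.

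First I would dispose of the inclusion $\mathrm{Spec}(T,R) \subseteq \{P \in \mathrm{Spec}(R) \mid PT \neq T\}$, which needs no hypothesis on $f$. Indeed, if $P \in \mathrm{Spec}(T,R)$, then $P = f^{-1}(Q) = {}^af(Q)$ for some $Q \in \mathrm{Spec}(T)$. Since $f(P) \subseteq Q$, we get $PT = f(P)T \subseteq Q \subsetneq T$, so that $PT \neq T$.

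The reverse inclusion is where the generizing property is used. Let $P \in \mathrm{Spec}(R)$ with $PT \neq T$. Because $PT$ is then a proper ideal of $T$, the closed set $\mathrm V(PT) \subseteq \mathrm{Spec}(T)$ is nonempty; I would pick any $Q \in \mathrm V(PT)$, for instance a maximal ideal of $T$ containing $PT$. Setting $P' := f^{-1}(Q)$, the inclusion $PT \subseteq Q$ gives $P \subseteq P'$, so $Q$ lies over $P'$ and $P$ is a generization of $P'$ in $\mathrm{Spec}(R)$. Now I would invoke the generizing (going-down) hypothesis on $f$: it provides a prime $Q' \subseteq Q$ of $T$ with $f^{-1}(Q') = P$, whence $P = {}^af(Q') \in \mathrm{Spec}(T,R)$, completing the proof.

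The only real obstacle is conceptual rather than computational: one must recognize that the condition $PT \neq T$ merely guarantees some prime of $T$ lying over a \emph{specialization} $P' \supseteq P$, and that it is precisely the going-down (generizing) property that allows one to descend from such a $Q$ to a prime contracting to $P$ itself. It is worth noting that plain generizing already suffices here; the full strength of universal generizing (and hence flatness, the motivating example via Going-Down) is not needed for this particular statement, although it is the natural hypothesis ensuring the property survives the base changes used elsewhere.
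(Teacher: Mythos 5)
Your proof is correct. The paper handles the nontrivial inclusion by a different mechanism: it passes to the base change $R/P \to T/PT \cong T\otimes_R R/P$ (this is exactly where the hypothesis \emph{universally} generizing is invoked), and then notes that since $PT \neq T$ the ring $T/PT$ has a minimal prime, which by going-down of the base-changed morphism must lie over the unique minimal prime $(0)$ of the domain $R/P$, i.e.\ over $P$. You instead apply the going-down property of $f$ itself: pick any prime $Q \supseteq PT$, contract it to $P' = f^{-1}(Q) \supseteq P$, and descend along the generization $P \subseteq P'$ to a prime $Q' \subseteq Q$ lying over $P$. Both arguments produce a prime containing $PT$ and then descend, but yours avoids the base change entirely, and your closing remark is accurate: plain generizing (going-down) of $f$ suffices for this statement. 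In fact the paper's own argument could also be run under that weaker hypothesis, since going-down passes automatically to quotient base changes $R \to R/I$: any prime $Q_1$ produced by going-down contracts to a prime $P_1 \supseteq I$, whence $f(I) \subseteq Q_1$ and so $IT \subseteq Q_1$. So the ``universally'' in the statement is the convenient hypothesis (it is what flatness provides and what is used elsewhere in the paper) rather than a logical necessity. What the paper's route buys is the one-line slogan that under going-down minimal primes lie over minimal primes; what yours buys is elementarity and a visibly weaker hypothesis.
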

\begin{proof} If $P$ is a prime ideal of $R$ such that there is some $Q\in \mathrm{Spec}(T)$ lying over $P$, then clearly $PT\neq T$. Conversely, if $PT \neq T$, there is a minimal prime ideal in $T/PT$, which lies over a minimal prime ideal of $R/P$, because $R/P \to T/PT$ has going-down. This achieves the proof.
\end{proof}

\begin{proposition} If $f: R \subseteq T$ is an extension such that $f$ is a flat epimorphism and $\mathrm{Supp}(f)$ is closed (for example if $f$ is of finite type), then $\mathrm{Spec}(T,R) = \mathrm D (\mathcal O(R,T))$ is an open affine subset; so that, $\mathcal O(R,T)$ is the radical of a finitely generated ideal of $R$. Moreover, we have $\mathcal O(R,T)T = T$ and $T$ is isomorphic to the ring of sections $\Gamma (\mathrm D(\mathcal O(R,T)),R)$.
\end{proposition}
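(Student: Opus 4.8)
The plan is to identify $\mathrm{Spec}(T,R)$ with $\mathrm D(\mathcal O(R,T))$, to establish quasi-compactness of this open set in order to extract finite generation, and finally to reconstruct $T$ as a ring of sections by gluing. First I would combine the two complementary descriptions already available. Since a flat morphism is universally generizing, Lemma~\ref{GEN} gives $\mathrm{Spec}(T,R) = \{P\in\mathrm{Spec}(R)\mid PT\neq T\}$, while Remark~\ref{SUPG} (using that $f$ is a flat epimorphism) gives $\mathrm{Supp}(f)=\mathrm{Supp}_R(T/R)=\{P\in\mathrm{Spec}(R)\mid PT=T\}$. Hence $\mathrm{Spec}(T,R)=\mathrm{Spec}(R)\setminus\mathrm{Supp}(f)$. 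As $\mathrm{Supp}(f)$ is assumed closed, Theorem~\ref{cars} yields $\mathrm{Supp}(f)=\overline{\mathrm{Supp}_R(T/R)}=\mathrm V(\mathcal O(R,T))$, so that $\mathrm{Spec}(T,R)=\mathrm D(\mathcal O(R,T))$, which is Zariski-open.

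Next I would show this open set is Zariski quasi-compact. Since $\mathrm{Spec}(T)$ is pro-constructible and images of pro-constructible sets under spectral maps are pro-constructible (as recalled in the introduction), $\mathrm{Spec}(T,R)={}^af(\mathrm{Spec}(T))$ is pro-constructible, hence compact in the (finer) constructible topology; because the Zariski topology is coarser, $\mathrm{Spec}(T,R)$ is Zariski quasi-compact. Covering $\mathrm D(\mathcal O(R,T))$ by the basic opens $\mathrm D(a)$ with $a\in\mathcal O(R,T)$ and extracting a finite subcover produces $a_1,\ldots,a_n\in\mathcal O(R,T)$ with $\mathrm D(\mathcal O(R,T))=\mathrm D(J)$ for $J:=(a_1,\ldots,a_n)$; then $\mathrm V(J)=\mathrm V(\mathcal O(R,T))$ forces $\mathcal O(R,T)=\sqrt J$, since $\mathcal O(R,T)$ is radical. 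This proves the finite-generation claim.

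For the identity $\mathcal O(R,T)T=T$, I would argue by prime avoidance in $T$: any $Q\in\mathrm{Spec}(T)$ lies over the point $P={}^af(Q)$ of $\mathrm{Spec}(T,R)=\mathrm D(\mathcal O(R,T))$, so $\mathcal O(R,T)\not\subseteq f^{-1}(Q)$ and therefore $\mathcal O(R,T)T\not\subseteq Q$; as no prime of $T$ contains $\mathcal O(R,T)T$, we get $\mathcal O(R,T)T=T$. Pulling back $\mathrm D(J)=\mathrm D(\mathcal O(R,T))$ along ${}^af$ and using that the image of ${}^af$ is exactly $\mathrm D(\mathcal O(R,T))$ shows $\mathrm V(JT)={}^af^{-1}(\mathrm{Supp}(f))=\emptyset$, whence $JT=T$ and the opens $\mathrm{Spec}(T_{a_i})$ cover $\mathrm{Spec}(T)$.

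Finally, to obtain affineness and the sections formula, I would exploit the very definition of the Oda ideal: each $a_i\in\mathcal O(R,T)$ means precisely that $R_{a_i}\to T_{a_i}$ is an isomorphism. Thus ${}^af$ restricts to isomorphisms $\mathrm{Spec}(T_{a_i})\xrightarrow{\ \sim\ }\mathrm D(a_i)$ of affine schemes, and these glue to a scheme isomorphism $\mathrm{Spec}(T)\cong\mathrm D(\mathcal O(R,T))$. This exhibits $\mathrm D(\mathcal O(R,T))$ as an open affine subset and identifies $T=\Gamma(\mathrm{Spec}(T),\mathcal O)\cong\Gamma(\mathrm D(\mathcal O(R,T)),R)$. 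The main obstacle I anticipate is precisely this gluing step: one must check that the local identifications $R_{a_i}=T_{a_i}$ are compatible over the intersections $\mathrm D(a_ia_j)$ (which holds because $a_ia_j\in\mathcal O(R,T)$ as well, giving $R_{a_ia_j}=T_{a_ia_j}$) and that $\{\mathrm{Spec}(T_{a_i})\}$ really covers $\mathrm{Spec}(T)$ — this is exactly what $\mathcal O(R,T)T=T$ secures.
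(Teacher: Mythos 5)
Your proof is correct, and after the opening step it takes a genuinely different route from the paper's. Both arguments begin identically: you combine Lemma~\ref{GEN} (flat morphisms are universally generizing) with Remark~\ref{SUPG} and the closedness hypothesis (via Theorem~\ref{cars}) to identify $\mathrm{Spec}(T,R)$ with $\mathrm D(\mathcal O(R,T))$. From there, however, the paper outsources the substantive content to Lazard's structure theory of flat epimorphisms: it quotes \cite[Proposition 1.5, p.109]{L} (injectivity of ${}^af$) to obtain $\mathrm D(\mathcal O(R,T)T)=\mathrm{Spec}(T)$, i.e. $\mathcal O(R,T)T=T$, and \cite[Proposition 2.5, p.112]{L} for the affineness of $\mathrm D(\mathcal O(R,T))$ and the isomorphism $T\simeq \Gamma(\mathrm D(\mathcal O(R,T)),R)$; the finite-generation claim is then a consequence of affineness (affine schemes are quasi-compact). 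You instead prove all of this from scratch: quasi-compactness of $\mathrm{Spec}(T,R)$ comes first, directly from pro-constructibility and the compactness of the constructible topology, so finite generation is obtained independently of (indeed before) affineness; the equality $\mathcal O(R,T)T=T$ follows merely from the image of ${}^af$ avoiding $\mathrm V(\mathcal O(R,T))$, so you never need injectivity of the spectral map; and affineness plus the sections formula are obtained by gluing the isomorphisms $R_{a_i}\to T_{a_i}$ furnished by the very definition of the Oda ideal, i.e. by exhibiting ${}^af$ as an isomorphism onto the open subscheme $\mathrm D(\mathcal O(R,T))$. What the paper's route buys is brevity, by leaning on known results; what yours buys is self-containedness and sharper bookkeeping of hypotheses --- the flat-epimorphism assumption enters only through Remark~\ref{SUPG}, everything else flowing from the equality $\mathrm{Spec}(T,R)=\mathrm D(\mathcal O(R,T))$ --- and in effect your gluing step reproves, in this special case, the ``well known result'' on open immersions of affine schemes that the paper mentions just after the proposition.
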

\begin{proof} By Lemma \ref{GEN}, we have ${}^af(\mathrm{Spec}(T)) =\{P\in \mathrm{Spec}(R) \mid PT\neq T\}$. Moreover, $\{P\in \mathrm{Spec}(R) \mid PT\neq T\} =  \mathrm{Spec}(R) \setminus \mathrm{Supp}(T/R) = \mathrm D(\mathcal O(R,T))$, according to Remark \ref{SUPG}. 

Since ${}^af$ is an injection \cite[Proposition 1.5, p.109]{L}, $ \mathrm D (\mathcal O(R,T)T) =\mathrm{Spec}(T) $, which completes the proof of the first statement. By \cite[Proposition 2.5, p.112]{L}, $\mathrm D(\mathcal O(R,T))$ is an affine open subset of $\mathrm{Spec}(R)$, and $T\simeq \Gamma(\mathrm D(\mathcal O(R,T),R))$.
\end{proof}

We recover  part of  the well known result: an open immersion of affine schemes  arises from a flat epimorphism of finite presentation.

\begin{example}\label{example} A ring extension $R\subset S$ is called minimal if the only $R$-subalgebras of $S$ are $R$ and $S$. In that case, $\mathrm{Supp}(S/R) = \{M\}$, where $M$ is a maximal ideal of $R$ \cite[Th\'eor\`eme 2.2]{FO}, called the crucial ideal of the extension. 

 We recall the following result of \cite[Proposition 4.1]{DPP3} for a ring extension $R \subseteq S$ such that there is a finite maximal chain $R=R_0 \subset\cdots\subset R_i \subset\cdots \subset R_n=S$ in $[R,S]$, where each $R_i\subset R_{i+1}$ is a minimal extension with crucial maximal ideal $M_i$ of $R_i$ for $0\leq i \leq n-1$. $($For instance, such a chain exists if $R \subseteq S$ has $\mathrm{FCP}$.$)$ Then $S$ is finitely generated over $R$ and $\mathrm {Supp}(S/R)=  \overline{\mathrm{Ass}_f(S/R)}$ is a finite closed subset of $\mathrm{Spec}(R)$.
In fact, $\mathrm {Supp}(S/R)=\{M_i\cap R\mid i=0,\ldots,n-1\} = \mathrm{V}(R:_R (s_1,\ldots,s_g))$, where $\{s_1,\ldots,s_g\}$ is any finite set of generators of the $R$-algebra $S$.

If in addition $R\subseteq S$ is integral (whence  finite), then $\mathrm {Supp}(S/R) \subseteq \mathrm{Max}(R)$, so that $\mathrm {Supp}(S/R) =\mathrm V(M_1\cap \cdots \cap M_n \cap R)$ and $\sqrt{(R:S)}= \mathcal O(R,S)= M_1\cap R \cap \cdots M_n\cap R$ is an intersection of finitely many maximal ideals. 
\end{example}

\section{The Constructible support of a module}

If $P$ is a prime ideal of a ring $R$, we denote by $\kappa (P)$ the residue field $R_P/PR_P$ of $R$ at $P$. Now if $f:R\to S$ is a ring morphism, $Q$ is a prime ideal of $S$ and $P:= f^{-1}(Q)$, there is a residue field extension $\kappa (P) \to  \kappa (Q)$.

In order to introduce the notion of constructible support, we will make some observations. Let $f:R \to S$ be a ring morphism and $P$ a prime ideal of $R$. The fiber of $f$ at $P$ is ${}^af^{-1}(\{P\})$ also denoted by ${}^af^{-1}(P)$. The natural map $S \to S\otimes_R\kappa (P)$ defines an homeomorphism $\mathrm{Spec}(S\otimes_R\kappa (P)) \to {}^af^{-1}(P) \subseteq \mathrm{Spec}(S)$. Since the spectrum of a ring is empty if and only if the ring is zero, we get that ${}^af^{-1}(P) \neq \emptyset \Leftrightarrow S\otimes_R\kappa (P) \neq 0$. It follows that $\mathrm{Spec}(S,R) = \{P\in \mathrm{Spec}(R) \mid S\otimes_R \kappa (P) \neq 0\}$. 

Now if $E$ is an $R$-module, Olivier defined the constructible support of $E$ as $\mathrm{CSupp}_R(E):= \{P \in \mathrm{Spec}(R) \mid E\otimes_R\kappa(P) \neq 0\}$, \cite[Chapitre II]{OL3}. We remark that for a ring morphism $f: R \to S$, defining $S$ as an $R$-module, $\mathrm{CSupp}_R(S) = \mathrm{Spec}(S,R)$ is pro-constructible. This may not hold for an arbitrary module $E$. We begin by recalling the following results because \cite{OL3} is perhaps not easily available, some of them coming from the fact that a tensor product of two vector spaces over a field is zero if and only if one of them is zero.

\begin{proposition}\label{PROPER} \cite[1.2, ii, iii) vi), vii),1.3, p.24 and Corollaires 2.7 and 2.8, p.26]{OL3} The following statements hold for an $R$-module $E$:

\begin{enumerate}  
\item  If $E$ is finitely generated over $R$, then $\mathrm{CSupp}(E)=\mathrm{Supp}(E)$;

\item If $E$ is a flat $R$-module,  $\mathrm{CSupp}(E) = \{P \in \mathrm{Spec}(R) | PE \neq E\}$;

\item If $G$ is a pure $R$-submodule of $E$, then $\mathrm{CSupp}(E) = \mathrm{CSupp}(G) \cap \mathrm{CSupp}(E/G)$;

\item $\mathrm{CSupp}(E\otimes_R F) = \mathrm{CSupp}(E) \cap \mathrm{CSupp}(F)$, when $E$ and $F$ are two $R$-modules;

\item If $f:R \to S$ is a ring morphism and $E$ is an $R$-module, then 
${}^a f^{-1}(\mathrm{CSupp}_R(E)) = \mathrm{CSupp}_S(E\otimes_RS)$ and ${}^af(\mathrm{CSupp}_S(E\otimes_R S)) = \mathrm{CSupp}_R(E) \cap \mathrm{CSupp}_R(S)= \mathrm{CSupp}_R(E) \cap \mathrm{Spec}(S,R)$;

\item  $[\mathrm{CSupp}(E)]^{\uparrow}=\mathrm{Supp}(E)=\mathrm{V}(0:E)$;

\item If $E$ is an afg $R$-module, then $\mathrm{CSupp}_R(E)$ is pro-constructible.
\end{enumerate}
\end{proposition}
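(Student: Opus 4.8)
The plan is to reduce the claim to the finitely generated case over the intermediate ring furnished by the afg hypothesis, and then to push the resulting pro-constructible support forward along the spectral map. Since $E$ is afg, I fix a ring morphism $f : R \to S$ such that $E$ is an $S$-module of finite type inducing on $E$ its given $R$-structure. The key identity I would establish is
\[
\mathrm{CSupp}_R(E) = {}^af\big(\mathrm{CSupp}_S(E)\big),
\]
where on the right $E$ is regarded as an $S$-module. Granting this, the proof finishes quickly: by statement (1) applied over $S$ one has $\mathrm{CSupp}_S(E) = \mathrm{Supp}_S(E) = \mathrm V(0 :_S E)$, which is Zariski closed in $\mathrm{Spec}(S)$, hence pro-constructible; and the spectral image of a pro-constructible set is pro-constructible, as recalled in the introduction. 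Therefore $\mathrm{CSupp}_R(E)$ is pro-constructible.

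To prove the identity I would argue fiberwise. Fix $P \in \mathrm{Spec}(R)$ and set $A := S \otimes_R \kappa(P)$, the fiber ring, whose spectrum is homeomorphic to ${}^af^{-1}(P)$, under which the prime of $A$ corresponding to $Q \in \mathrm{Spec}(S)$ lying over $P$ has residue field $\kappa(Q)$. Using associativity of the tensor product one has $E \otimes_R \kappa(P) = E \otimes_S A$ as an $A$-module, and for such $Q$,
\[
(E \otimes_R \kappa(P)) \otimes_A \kappa(Q) = E \otimes_S \kappa(Q).
\]
Since $E$ is of finite type over $S$, the $A$-module $E \otimes_R \kappa(P)$ is of finite type; consequently it is nonzero precisely when it has nonzero fiber at some prime of $A$ (apply statement (1) over $A$, or Nakayama). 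Translating through the displayed formula, $E \otimes_R \kappa(P) \neq 0$ if and only if $E \otimes_S \kappa(Q) \neq 0$ for some $Q$ lying over $P$, which is exactly the assertion that $P \in {}^af(\mathrm{CSupp}_S(E))$.

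I expect the main obstacle to be the forward direction of this fiberwise equivalence, namely deducing from $E \otimes_R \kappa(P) \neq 0$ the existence of an actual prime $Q$ of $S$ over $P$ detecting the nonvanishing. This is precisely where the afg (finite type over $S$) hypothesis is essential: without it a nonzero module over the fiber ring $A$ could have empty constructible support over $A$, so that the spectral image would fail to recover $\mathrm{CSupp}_R(E)$. The reverse direction is formal, needing only the base-change identification $(E \otimes_R \kappa(P)) \otimes_A \kappa(Q) = E \otimes_S \kappa(Q)$ together with the fact that a quotient-localization of a zero module is zero. Some care is required in identifying the residue field of $A$ at the point corresponding to $Q$ with $\kappa(Q)$, but this is the standard description of the fibers of ${}^af$.
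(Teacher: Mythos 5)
Your argument is correct, but be aware of how it sits relative to the paper: the authors give no proof of this proposition at all --- it is a verbatim recollection of Olivier's results, cited from \cite{OL3} precisely because that reference ``is perhaps not easily available''. So there is no proof in the paper to compare against; what you have done is reconstruct the argument behind the part of the citation corresponding to item (7). Your reconstruction is sound: the reduction $E\otimes_R\kappa(P)\cong E\otimes_S A$ with $A:=S\otimes_R\kappa(P)$, the identification of the residue fields of $A$ with the fields $\kappa(Q)$ for $Q$ lying over $P$, and the use of Nakayama (i.e.\ item (1) over $A$) to see that the finite $A$-module $E\otimes_S A$ is nonzero exactly when some fiber of it is nonzero, together give $\mathrm{CSupp}_R(E)={}^af\bigl(\mathrm{CSupp}_S(E)\bigr)={}^af\bigl(\mathrm V(0:_SE)\bigr)$; this exhibits $\mathrm{CSupp}_R(E)$ as the spectral image of the composite $R\to S\to S/(0:_SE)$, which is pro-constructible by the very definition recalled in the paper's introduction. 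You also correctly isolate where finiteness is indispensable: without it the forward implication fails (e.g.\ $\mathbb Q/\mathbb Z$ is a nonzero $\mathbb Z$-module with empty constructible support), and it is worth noting that your key identity is a restriction-of-scalars statement, not a formal consequence of item (5), which concerns modules of the form $E\otimes_RS$; your direct fiberwise proof is exactly what is needed, and invoking item (1) inside it creates no circularity. The only caveat is one of scope: the statement as posed has seven parts, and you prove only (7), simply invoking (1)--(6); a self-contained treatment would have to dispose of those as well (they follow from Nakayama and from the fact, mentioned by the authors, that a tensor product of two vector spaces over a field vanishes if and only if one factor does), though since the paper itself treats all seven as recalled facts, this omission is defensible.
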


\begin{remark}\label{PROPER1} When $E$ is finitely generated over $R$, we deduce from (1) and (5) the known result: ${}^a f^{-1}(\mathrm{Supp}_R(E)) = \mathrm{Supp}_S(E\otimes_RS)$ (\cite[Proposition 19, p. 135]{B1}. 
\end{remark}

\begin{proposition}\label{afg} Let $E$ be an afg $R$-module and let $R\to T$ be a ring morphism. Then, $E':=E\otimes_R T$ is an afg $T$-module.
\end{proposition}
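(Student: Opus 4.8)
The plan is to transport the witness algebra for $E$ along the base change. Since $E$ is afg over $R$, unwinding the definition gives a ring morphism $\phi : R \to S$ together with an $S$-module structure on $E$ that restricts, via $\phi$, to the prescribed $R$-module structure and for which $E$ is finitely generated, say by $x_1, \ldots, x_n$. The natural candidate for a witness algebra over $T$ is $S' := S \otimes_R T$, which is a $T$-algebra through the canonical morphism $T \to S \otimes_R T$, $t \mapsto 1 \otimes t$. The goal is to exhibit $E' = E \otimes_R T$ as a finitely generated $S'$-module whose induced $T$-structure is the correct one.

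The heart of the argument is the identification $E' = E \otimes_R T \cong E \otimes_S (S \otimes_R T) = E \otimes_S S'$ of $S'$-modules. This is precisely the associativity of the tensor product already invoked in Section~3: the map $e \otimes (s \otimes t) \mapsto se \otimes t$ and its inverse $e \otimes t \mapsto e \otimes (1 \otimes t)$ realize the isomorphism, which is nothing but transitivity of extension of scalars along $S \to S \otimes_R T$. Under this identification, base change of a finitely generated module stays finitely generated: the images $x_1 \otimes 1, \ldots, x_n \otimes 1$ of the generators generate $E \otimes_S S'$ over $S'$, so $E'$ is an $S'$-module of finite type.

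The one point that requires care, and what I regard as the main (though minor) obstacle, is verifying that the $T$-module structure that $E'$ inherits from $S'$ by restriction along $T \to S'$ agrees with the original $T$-module structure on $E \otimes_R T$. This is a short diagram chase on pure tensors: the $S'$-action reads $(s \otimes t)\cdot(e \otimes t') = se \otimes t t'$, and restricting along $t \mapsto 1 \otimes t$ yields $t \cdot (e \otimes t') = e \otimes t t'$, which is exactly the canonical $T$-action on $E \otimes_R T$. Once this compatibility is confirmed, the morphism $T \to S'$ together with the finite generating set $\{x_i \otimes 1\}$ exhibits $E'$ as an afg $T$-module, completing the proof.
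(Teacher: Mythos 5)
Your proof is correct and follows essentially the same route as the paper's: both take the witness morphism $R \to S$, form $S' := S \otimes_R T$, and observe that the images of the generators of $E$ over $S$ generate $E' = E \otimes_R T$ over $S'$. Your additional verification of the isomorphism $E \otimes_R T \cong E \otimes_S S'$ and of the compatibility of the induced $T$-module structure simply makes explicit what the paper leaves implicit.
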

\begin{proof} Since $E$ is an afg $R$-module, there exists a ring morphism $R\to S$ such that $E$ is finitely generated over $S$. Let $\{f_i\}_{i=1}^n$ be a generating set of $E$ over $S$. Set $S':=S\otimes_R T$. We have the following commutative diagram

\centerline{$\begin{matrix} 
       R           & \to &        S         \\
\downarrow &  {}  & \downarrow \\
       T          &  \to &        S'  
    \end{matrix}$}
\noindent Then, $E'$ is finitely generated over $S'$ with $\{f_i\otimes 1\}_{i=1}^n$ as a generating set over $S'$, so that $E'$ is an afg $T$-module.
\end{proof}

\begin{proposition}\label{afg1} Let $E$ be an afg $R$-module and let $f:R\to S$ be a generizing (going-down) ring morphism. Then, ${}^a f^{-1}(\mathrm{Supp}_R(E)) = \mathrm{Supp}_S(E\otimes_RS)$.
\end{proposition}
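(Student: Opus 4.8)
The plan is to translate the desired equality of supports into an equality of specialization closures of the \emph{constructible} supports, arranged so that the two hypotheses (afg and going-down) enter in two clearly separated places. Write $C := \mathrm{CSupp}_R(E)$. By Proposition~\ref{PROPER}(6) we have $\mathrm{Supp}_R(E) = C^{\uparrow}$, so the left-hand side is ${}^af^{-1}(\mathrm{Supp}_R(E)) = {}^af^{-1}(C^{\uparrow})$. For the right-hand side I would first invoke Proposition~\ref{afg}, which guarantees that $E\otimes_RS$ is an afg $S$-module; this places the base-changed module in the range where the support/constructible-support dictionary applies, so that Proposition~\ref{PROPER}(6) gives $\mathrm{Supp}_S(E\otimes_RS) = [\mathrm{CSupp}_S(E\otimes_RS)]^{\uparrow}$. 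Combining this with the base-change identity ${}^af^{-1}(C) = \mathrm{CSupp}_S(E\otimes_RS)$ of Proposition~\ref{PROPER}(5) rewrites the right-hand side as $[{}^af^{-1}(C)]^{\uparrow}$. Thus the proposition reduces to the purely topological identity ${}^af^{-1}(C^{\uparrow}) = [{}^af^{-1}(C)]^{\uparrow}$ on $\mathrm{Spec}(S)$.

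One inclusion holds for every ring morphism: if $Q'\subseteq Q$ with $Q'\in {}^af^{-1}(C)$, then $f^{-1}(Q')\in C$ and $f^{-1}(Q')\subseteq f^{-1}(Q)$, so $f^{-1}(Q)\in C^{\uparrow}$, i.e. $Q\in {}^af^{-1}(C^{\uparrow})$. This gives $[{}^af^{-1}(C)]^{\uparrow}\subseteq {}^af^{-1}(C^{\uparrow})$, which is just the always-valid inclusion $\mathrm{Supp}_S(E\otimes_RS)\subseteq {}^af^{-1}(\mathrm{Supp}_R(E))$ coming from the isomorphism $(E\otimes_RS)_Q\simeq E_{f^{-1}(Q)}\otimes_{R_{f^{-1}(Q)}}S_Q$.

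For the reverse inclusion I would use the generizing (going-down) hypothesis. Take $Q\in {}^af^{-1}(C^{\uparrow})$ and set $P:=f^{-1}(Q)\in C^{\uparrow}$, so there is a generization $P_0\subseteq P$ with $P_0\in C$. Since $Q$ lies over $P$ and $f$ is going-down, there exists $Q_0\in\mathrm{Spec}(S)$ with $Q_0\subseteq Q$ and $f^{-1}(Q_0)=P_0$; then $Q_0\in {}^af^{-1}(C)$ and $Q_0\subseteq Q$, whence $Q\in[{}^af^{-1}(C)]^{\uparrow}$. This establishes ${}^af^{-1}(C^{\uparrow})\subseteq[{}^af^{-1}(C)]^{\uparrow}$ and hence the identity, completing the proof.

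The single delicate point is this last step: the inclusion ${}^af^{-1}(C^{\uparrow})\subseteq[{}^af^{-1}(C)]^{\uparrow}$ is exactly where going-down is indispensable, since lifting the generization $P_0$ of $P$ to a generization $Q_0$ of $Q$ is precisely that property. The afg hypothesis is equally essential but plays a complementary, structural role: through Proposition~\ref{afg} it keeps $E\otimes_RS$ afg over $S$, so that both sides can legitimately be read as specialization closures of the (pro-constructible) constructible supports; for a module outside this class the identification of the support with $(\mathrm{CSupp})^{\uparrow}$ would no longer be available and the argument would break down.
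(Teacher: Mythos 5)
Your proof is correct, and it shares the paper's reduction skeleton: both arguments use Proposition~\ref{afg} together with Proposition~\ref{PROPER}(5) and (6) to rewrite the two sides of the equality in terms of $C:=\mathrm{CSupp}_R(E)$ and its preimage under ${}^af$. The genuine difference lies in the step where going-down enters. The paper converts specializations into Zariski closures: it needs $C$ and $\mathrm{CSupp}_S(E\otimes_RS)$ to be pro-constructible (Proposition~\ref{PROPER}(7), available because $E$ and $E\otimes_RS$ are afg), so that closure and specialization coincide by \cite[Corollaire 7.3.2, p. 339]{EGA}, and it then cites \cite[Proposition 7.3.3, p. 339]{EGA} to get ${}^af^{-1}(\overline{C})=\overline{{}^af^{-1}(C)}$ for the generizing map ${}^af$. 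You instead stay at the level of specializations and verify the identity ${}^af^{-1}(C^{\uparrow})=[{}^af^{-1}(C)]^{\uparrow}$ by hand: the inclusion $\supseteq$ holds for every ring morphism, and $\subseteq$ is exactly one application of going-down, lifting a generization $P_0\subseteq P=f^{-1}(Q)$ with $P_0\in C$ to a generization $Q_0\subseteq Q$. What your route buys is elementarity and economy of hypotheses: the two EGA citations disappear, and the pro-constructibility of the constructible supports (the first assertion of the paper's proof) is never used, since your identity holds for an \emph{arbitrary} subset $C$ of $\mathrm{Spec}(R)$. What remains essential in both arguments, exactly as you note, is the afg hypothesis, which enters only through Proposition~\ref{PROPER}(6) applied to $E$ and, via Proposition~\ref{afg}, to $E\otimes_RS$; that identification of the support with $(\mathrm{CSupp})^{\uparrow}$ can fail for general modules (for instance $\mathbb Q/\mathbb Z_{(p)}$ over $\mathbb Z_{(p)}$ has empty constructible support but nonzero support). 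In short, the paper's proof is shorter as a chain of citations, while yours is self-contained and isolates precisely where each of the two hypotheses is used.
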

\begin{proof} Since $E$ is an afg $R$-module, $X:=\mathrm{CSupp}_R(E)$ is pro-constructible. Moreover, $E':=E\otimes_R S$ is an afg $S$-module. Now, using \cite[Corollaire 7.3.2 and Proposition 7.3.3, p. 339]{EGA}, we get $\mathrm{Supp}(E')=[\mathrm{CSupp}(E')]^{\uparrow}=\overline{\mathrm{CSupp}(E')}=\overline{{}^a f^{-1}(\mathrm{CSupp}(E))}=\overline{{}^af^{-1}(X)}={}^af^{-1}(\overline X)={}^af^{-1}( X^{\uparrow})={}^af^{-1}(\mathrm{CSupp}_R(E)^{\uparrow})={}^af^{-1}(\mathrm{Supp}_R(E))$ because $\overline X=X^{\uparrow}$ since $X$ is pro-constructible. 
\end{proof}

Each ring $R$ admits a universal absolutely flat ring $T(R)$, such that there is a ring epimorphism $t: R \to T(R),$ verifying: for each ring morphism $R \to S$, where $S$ is absolutely flat, there is a (unique) ring morphism $T(R) \to S$, such that $R \to S = R\to T(R) \to S$ \cite{OL1}. The Zariski topology and the constructible topology on $\mathrm{Spec}(T(R))$ coincide. The spectral map ${}^at :\mathrm{Spec}(T(R)) \to \mathrm{Spec}(R)$ is an homeomorphism when $\mathrm{Spec}(R)$ is endowed with the constructible topology. Moreover, for each prime ideal $Q$ of $T(R)$ lying over $P$ in $R$, the residual extension $\kappa (P) \to \kappa (Q)$ is an isomorphism of fields and $ \kappa (Q) \cong  \mathrm T (R)_Q$. 

Let $E$ be an $R$-module and the flat $T(R)$-module $F:= E\otimes_R T(R)$. Because $\mathrm T(R)$ is absolutely flat, $\mathrm{CSupp}_{T(R)}(F)= \mathrm{Supp}_{T(R)}(F)$. Since $F$ is flat over $T(R)$, $ \mathrm{CSupp}_{T(R)}(F) = \{ M\in \mathrm{Spec}(T(R)) \mid MF \neq F\}$ holds according to Proposition~\ref{PROPER}(2). Now by Proposition~\ref{PROPER}(5) we have $ {}^at^{-1}(\mathrm{CSupp}_R(E)) = \mathrm{Supp}_{T(R)}(F)$ and ${}^at(\mathrm{Supp}_{T(R)}(F)) =$

\noindent$ \mathrm{CSupp}_R(E) $, because ${}^at$ is surjective. Hence, ${}^at$ induces an homeomorphism  $\mathrm{Supp}_{T(R)}(F) \to\mathrm{CSupp}_R(E)$ for the constructible topology. Example~\ref{EXA} tells us that  $\mathcal O_{T(R)}(F) = 0:_{T(R)}F$ if $\mathrm{Supp}_{T(R)}(F)$ is closed.
 It follows that $\overline{\mathrm{CSupp}_R (E)} =\overline{{}^at(\mathrm V(0:F))}  = \mathrm V (t^{-1}(0:F))$. 

\section{ Oda ideals and algebraic  constructions}

We now examine the properties  of Oda ideals with respect to some constructions.

\begin{proposition}\label{pro} Let $E$ be an $R$-module of finite type, then 
$\mathcal O(E) =\mathcal O_R(R/(0:E))$. 
\end{proposition}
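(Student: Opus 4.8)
The plan is to reduce everything to the formula for the Oda ideal of an almost finitely generated module established in Proposition~\ref{PROCO}(4), and then to observe that the two modules under consideration have the same annihilator.

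First I would record that $E$, being of finite type, is in particular afg (take the identity morphism $R\to R$). Hence Proposition~\ref{PROCO}(4) applies and gives directly
\[\mathcal O(E) = \sqrt{0:_R E}.\]
Alternatively one may argue by hand: if $\{x_1,\dots,x_n\}$ generates $E$, then $E_a = 0$ if and only if $a^{m_i}x_i = 0$ for suitable integers $m_i$ and each $i$, if and only if $a \in \cap_i \sqrt{0:_R x_i} = \sqrt{\cap_i (0:_R x_i)} = \sqrt{0:_R E}$, using $0:_R E = \cap_i (0:_R x_i)$.

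Next I would treat the module $R/(0:_R E)$. It is cyclic, hence of finite type, hence afg, so Proposition~\ref{PROCO}(4) applies to it as well. The only computation needed is its annihilator: for any ideal $I$ of $R$ one has $0:_R(R/I) = I$, since $a\cdot(r+I)=0$ for all $r$ forces $a = a\cdot 1 \in I$, while conversely any $a\in I$ clearly annihilates $R/I$. Taking $I = 0:_R E$ gives $0:_R\big(R/(0:_R E)\big) = 0:_R E$, whence
\[\mathcal O_R\big(R/(0:_R E)\big) = \sqrt{0:_R\big(R/(0:_R E)\big)} = \sqrt{0:_R E}.\]

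Combining the two displays yields $\mathcal O(E) = \sqrt{0:_R E} = \mathcal O_R(R/(0:_R E))$, as desired. There is essentially no serious obstacle: the content is absorbed entirely into Proposition~\ref{PROCO}(4), and the statement is really just the observation that the Oda ideal of an afg module depends only on its annihilator (through the radical). The one point to check with care is that $R/(0:_R E)$ qualifies as afg — which is immediate, since it is cyclic — so that the clean formula $\mathcal O(\,\cdot\,) = \sqrt{0:_R(\,\cdot\,)}$ is legitimately available for both modules; once that is in place, the identification of the two annihilators is routine.
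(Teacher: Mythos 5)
Your proof is correct and non-circular: Proposition~\ref{PROCO}(4) is established well before this statement and does not depend on it, so citing it is legitimate, and your annihilator computation $0:_R(R/I)=I$ is routine. However, your route is genuinely different from the paper's. The paper's proof is a one-line, element-wise equivalence via the Stokes formula recalled in the introduction: for $a\in R$, $a\in\mathcal O(E)$ means $E\otimes_R R_a=0$, which, since $E$ is of finite type, is equivalent by Stokes to $R_a=(0:E)R_a$, i.e.\ to $(R/(0:E))_a=0$, i.e.\ to $a\in\mathcal O_R(R/(0:E))$. This never computes either ideal and does not pass through supports or radicals. Your argument instead evaluates both sides explicitly: $\mathcal O(E)=\sqrt{0:_RE}$ by Proposition~\ref{PROCO}(4) (or by your direct generator argument), and $\mathcal O_R(R/(0:E))=\sqrt{0:_R(R/(0:E))}=\sqrt{0:_RE}$. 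What your approach buys is the explicit identification of the common value as $\sqrt{0:_RE}$, making transparent that the Oda ideal of a finite type module depends only on the radical of its annihilator, and exhibiting the proposition as an immediate corollary of the earlier support computation. What the paper's approach buys is brevity and independence from Proposition~\ref{PROCO}: it isolates exactly where finite generation enters (as the hypothesis of the Stokes formula) and proves the equality of the two ideals directly, element by element, without any intermediate formula.
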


\begin{proof} 
Observe that $a\in \mathcal O(E)$ if and only if $R_a = (0:E)R_a$ by the Stokes formula.
\end{proof}

The statements of the next Proposition are easy to prove.

\begin{proposition}\label{con} The following statements hold:
\begin{enumerate}
\item  Let $N$ be a submodule of an $R$-module $M$, then;

\centerline{$\mathcal O(M) = \mathcal O (N) \cap \mathcal O(M/N)$.}

\item If $R\subseteq S \subseteq T$ is a tower of extensions, then:

\centerline{$\mathcal O_R(R,T) = \mathcal O_R(R,S) \cap \mathcal O_S(S,T)$.} 
\noindent  This equation is related to the  well known equation:
 
 \centerline{$\sqrt{(R:_RT)} = \sqrt{(R:_RS)}\cap \sqrt{(S:_ST)}$.}

\item Let $E$ be an $R$-module which is an upward directed union of finitely generated submodules, $\{E_i \mid i\in  I\}$, then :

\centerline{$\mathcal O(E) = \cap [\mathcal O(E_i) \mid i \in I] = \cap [\sqrt{0:_R E_i} \mid i\in I]$;}

\centerline{$\mathrm{Supp}(E) = \cup [\mathrm{Supp}(E_i) \mid i\in I]= \cup [\mathrm V(0:E_i) \mid i\in I]$.} 

\item Similarly, let $R \subseteq S$  be a ring extension, then $S$ is the upward directed  union of all its $R$-subalgebras of finite type $S_i$ for $i\in I$ and then $\mathcal O(R,S) =  \cap [\mathcal O(R,S_i) \mid i \in I]$.
\end{enumerate}
\end{proposition}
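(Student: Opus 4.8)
The plan is to reduce every statement to the elementwise description $\mathcal O(E) = \{a \in R \mid \forall x \in E\ \exists n \in \mathbb N,\ a^n x = 0\}$ given in the Definition, together with the exactness of localization. For (1) I would tensor the short exact sequence $0 \to N \to M \to M/N \to 0$ with the flat $R$-algebra $R_a$; since localization is exact, the sequence $0 \to N_a \to M_a \to (M/N)_a \to 0$ stays exact, so $M_a = 0$ if and only if both $N_a = 0$ and $(M/N)_a = 0$. Reading this off for every $a \in R$ yields $\mathcal O(M) = \mathcal O(N) \cap \mathcal O(M/N)$. Alternatively one argues directly on annihilation of powers: if $a^k x \in N$ and $a^l$ annihilates $a^k x$, then $a^{k+l} x = 0$.

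For (2) the natural tool is the characterization $\mathcal O(R,S) = \{a \in R \mid R_a = S_a\}$ recorded after the Definition. I would first note that the intersection on the right is formed inside $S$, since $\mathcal O_R(R,S) \subseteq R \subseteq S$ and $\mathcal O_S(S,T) \subseteq S$, and that it automatically lands in $R$. Localizing the tower $R \subseteq S \subseteq T$ at the powers of a fixed $a \in R$ gives $R_a \subseteq S_a \subseteq T_a$, so $R_a = T_a$ forces $R_a = S_a = T_a$; conversely $R_a = S_a$ together with $S_a = T_a$ gives $R_a = T_a$. This is exactly the statement that $a \in \mathcal O_R(R,T)$ if and only if $a \in \mathcal O_R(R,S)$ and $a \in \mathcal O_S(S,T)$. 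The quoted conductor identity is proved by the same bookkeeping of exponents: $(R:T)$ is contained in both $(R:S)$ and $(S:T)$, whence $\sqrt{(R:T)} \subseteq \sqrt{(R:S)} \cap \sqrt{(S:T)}$, and if $a^m S \subseteq R$ and $a^k T \subseteq S$, then $a^{m+k} T = a^m(a^k T) \subseteq a^m S \subseteq R$.

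Statements (3) and (4) rest on the fact that $\mathcal O$ turns filtered unions of submodules into intersections. Writing $E = \cup_i E_i$, an element $a$ has the property that every $x \in E$ is annihilated by some power $a^n$ precisely when this holds on each $E_i$, because each $x$ lies in some $E_i$; hence $\mathcal O(E) = \cap_i \mathcal O(E_i)$, directedness being needed only to guarantee that $E$ is the genuine union of the submodules. For (3) each $E_i$ is finitely generated, so Proposition \ref{PROCO}(4) gives $\mathcal O(E_i) = \sqrt{0:_R E_i}$, yielding the second equality; the support identity then follows from $E_P = \cup_i (E_i)_P$ and from $\mathrm{Supp}(E_i) = \mathrm V(0:_R E_i)$ for finitely generated $E_i$. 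For (4) I would apply the same union principle to $S/R = \cup_i (S_i/R)$, which is filtered because the subalgebra generated by two finite-type subalgebras is again of finite type; via Remark \ref{EXA1} this reads $\mathcal O(R,S) = \cap_i \mathcal O(R,S_i)$.

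The only genuine care-points, which I expect to be the main (mild) obstacle, are keeping track of where each intersection is taken when ideals of $R$ and of $S$ are mixed in (2) and in the conductor identity, and noticing that the second equality of (3) is unavailable in (4) since $S_i/R$ need not be a finitely generated $R$-module; only the first, purely set-theoretic, equality transfers.
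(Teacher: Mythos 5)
Your proposal is correct, and there is nothing in the paper to compare it against: the authors preface this proposition with ``The statements of the next Proposition are easy to prove'' and give no proof at all, so your write-up simply supplies the intended routine verifications. The ingredients you use --- the elementwise description $\mathcal O(E)=\{a\in R\mid \forall x\in E\ \exists n,\ a^nx=0\}$, exactness of localization applied to $0\to N\to M\to M/N\to 0$, the identification $\mathcal O(R,S)=\{a\in R\mid R_a=S_a\}$, and the fact that a filtered union is turned into an intersection --- are exactly the tools the paper has set up beforehand, and your two ``care-points'' (that the intersections in (2) mix ideals of $R$ and of $S$ but land in $R$, and that in (4) the modules $S_i/R$ need not be finitely generated over $R$, so only the first equality of (3) transfers) are the only places where anything could go wrong; you handle both correctly.
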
 

\section{Critical ideals and crucial ideals}

We say that an $R$-module $E$ has a crucial ideal $\mathcal C (E)$ if $|\mathrm{Supp}(E)| =1$ and then  the only element $M$ of $\mathrm{Supp}(E)$ is a maximal ideal because a support is stable under specialization, so that $\mathcal C(E):= M$. It is then clear that $\mathcal O(E) = M$ is a maximal ideal and that $\mathrm{Supp}(E)$ is closed.

We also say that a ring extension $R\subset S$ has a crucial ideal if $\mathrm{Supp}(S/R)$ has a unique element, which is necessarily a maximal ideal $M$, called the crucial ideal of $R\subseteq S$ \cite[Definition 2.1]{Pic 15} and \cite[Definition 2.10]{POINT}. This means that $R_P=S_P$ for each prime ideal $P\neq M$. We also say that the extension is $M$-crucial and we set $\mathcal C(R,S) := M$.

 We observed that  a minimal extension has a crucial ideal (Example \ref{example}). More generally, a pointwise minimal extension has also a crucial ideal.
An extension $R\subseteq S$ is called pointwise minimal if $R\subset R[x]$ is a minimal extension for each $x\in S\setminus R$ \cite[Theorem 3.2]{POINT}.

\begin{proposition}\label{1.191}  Let $R \subset S$ be an extension, with conductor $C:= (R:S)$. The following statements hold:
 
 (1) If $R\subset S$ is $M$-crucial, then $C \subseteq M$.
 
 (2)  If  $R\subset S$ is integral, then $R\subset S$  has a crucial  ideal if and only if $\sqrt C \in \mathrm{Max}(R)$,  and then  $\mathcal C (R,S)= \sqrt C$.
\end{proposition}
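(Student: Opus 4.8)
The plan is to handle the three assertions separately, obtaining (1) and the "if" part of (2) essentially for free from general support theory, and concentrating the real work on the "only if" part of (2), where integrality is indispensable. For (1), I would use that $M$ lies in $\mathrm{Supp}_R(S/R)$ together with the universally valid inclusion $\mathrm{Supp}_R(E)\subseteq\mathrm V(0:_R E)$: if $(0:_R E)\not\subseteq P$, some $a\notin P$ annihilates $E$, forcing $E_P=0$. Applying this to $E=S/R$, whose annihilator is exactly $C=(R:S)$, gives $M\in\mathrm V(C)$, that is $C\subseteq M$. Equivalently one can note $C\subseteq\mathcal O(R,S)\subseteq M$, since $\overline{\mathrm{Supp}_R(S/R)}=\mathrm V(\mathcal O(R,S))$ by Theorem~\ref{cars} and $\mathcal O(R,S)$ is radical. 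Integrality plays no role here.

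For the "if" direction of (2), suppose $\sqrt C=N$ is maximal. Then $\mathrm V(C)=\mathrm V(\sqrt C)=\{N\}$, and since $\mathrm{Supp}_R(S/R)\subseteq\mathrm V(C)$ as above, while $\mathrm{Supp}_R(S/R)\neq\emptyset$ because $R\subsetneq S$ makes $S/R\neq 0$, I conclude $\mathrm{Supp}_R(S/R)=\{N\}$. Hence the extension has a crucial ideal and $\mathcal C(R,S)=N=\sqrt C$. This again holds for an arbitrary extension; the hypothesis on $\sqrt C$ carries the whole argument.

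The substance lies in the "only if" direction. Assume $R\subset S$ is integral and $M$-crucial. By the discussion preceding the Proposition, $|\mathrm{Supp}_R(S/R)|=1$ forces $\mathcal O(R,S)=\mathcal O_R(S/R)=M$, a maximal ideal, and by (1) we already know $\sqrt C\subseteq M$. It therefore suffices to prove $M\subseteq\sqrt C$; in fact I would establish the sharper identity $\mathcal O(R,S)=\sqrt C$ for integral extensions, which then delivers $\sqrt C=M\in\mathrm{Max}(R)$ and $\mathcal C(R,S)=\sqrt C$ simultaneously. The inclusion $\sqrt C\subseteq\mathcal O(R,S)$ is automatic, since $C\subseteq\mathcal O(R,S)$ and the latter is radical. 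For the reverse inclusion I would take $a\in\mathcal O(R,S)$, so $R_a=S_a$, whence by Remark~\ref{EXA1} each $s\in S$ satisfies $a^{n_s}s\in R$ for some $n_s$, and then try to manufacture a single exponent $N$ with $a^NS\subseteq R$, i.e. $a^N\in C$.

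Producing that uniform exponent is the main obstacle, and it is exactly where integrality must be used. When $R\subseteq S$ is module-finite, say $S=Rx_1+\cdots+Rx_m$, one takes $N=\max_j n_{x_j}$ and gets $a^NS\subseteq R$ at once; equivalently $S/R$ is then finitely generated, so $\mathrm{Supp}_R(S/R)=\mathrm V(0:_R(S/R))=\mathrm V(C)$ with no closure, and the crucial-ideal hypothesis gives $\mathrm V(C)=\{M\}$ and $\sqrt C=M$ directly. For a general integral extension I would pass to the directed union $S=\bigcup_i S_i$ of its finite-type $R$-subalgebras, each module-finite by integrality, and invoke Proposition~\ref{con}(4) to write $\mathcal O(R,S)=\bigcap_i\mathcal O(R,S_i)=\bigcap_i\sqrt{(R:S_i)}$. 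The delicate point is that $\sqrt C=\sqrt{\bigcap_i(R:S_i)}$ need not equal $\bigcap_i\sqrt{(R:S_i)}$ for an infinite family, so the uniform-exponent step does not automatically survive the passage to the limit; controlling this intersection is the crux on which the argument turns, and I would attack it by showing that the crucial-ideal hypothesis permits, at each prime $P\neq M$, a localization of the conductor yielding $C_P=R_P$ and hence $C\not\subseteq P$ (which is unproblematic in the module-finite case and forces $\mathrm V(C)=\{M\}$).
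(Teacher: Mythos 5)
Your proof of (1) and of the ``if'' half of (2) is correct and complete: $\mathrm{Supp}_R(S/R)\subseteq\mathrm V(C)$ together with the nonemptiness of the support settles both, and you rightly observe that integrality plays no role there (the paper instead runs both directions of (2) through a reduction to the directed family of module-finite subextensions $R_\alpha$, using $\mathrm{Supp}(R_\alpha/R)=\mathrm V(C_\alpha)$ and $C=\bigcap_\alpha C_\alpha$). The genuine gap is the ``only if'' half, which you explicitly leave open: you reduce it, via Proposition~\ref{con}(4) and Proposition~\ref{PROCO}(4), to the identity $\sqrt{\bigcap_\alpha C_\alpha}=\bigcap_\alpha\sqrt{C_\alpha}$, and you correctly note that radicals do not commute with infinite intersections. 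Your proposed repair also fails: from $R_P=S_P$ for $P\neq M$ you cannot conclude $C_P=R_P$, because conductor formation does not commute with localization unless $S$ is a finite $R$-module; in general one only has $C_P\subseteq (R_P:S_P)$, and this inclusion can be strict.

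You have, however, put your finger on exactly the weak point of the paper's own proof, which rests on the same true facts and then implicitly needs $\sqrt{\bigcap_\alpha C_\alpha}=\bigcap_\alpha\sqrt{C_\alpha}$ --- precisely the step you flagged. No argument can close this gap, because the ``only if'' assertion is false for general (non-finite) integral extensions. Take $R$ a discrete valuation ring with maximal ideal $M$, let $E:=\bigoplus_{n\geq 1}R/M^n$, and let $S:=R(+)E$ be the idealization. Then $R\subset S$ is integral (each $(r,e)$ is a root of $(X-r)^2$), and $S/R\cong E$ is a torsion module, so $\mathrm{Supp}_R(S/R)=\{M\}$ and the extension is $M$-crucial; yet $C=0:_RE=\bigcap_n M^n=0$, so $\sqrt C=0\notin\mathrm{Max}(R)$. (Consistently with Theorem~\ref{cars}, here $\mathcal O(R,S)=M$ strictly contains $\sqrt C$.) The equivalence does hold under a finiteness hypothesis --- by your module-finite argument, or more generally when $S/R$ is afg, since then $\mathcal O(R,S)=\sqrt C$ by Proposition~\ref{PROCO}(4) --- so some such hypothesis must be added to (2). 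In short, the step you could not carry out is not a defect of your write-up: it is the crux, and it defeats the paper's proof as well.
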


\begin{proof}   

(1) If the extension is $M$-crucial, suppose that there is some $x \in C\setminus M$, then it is easily seen that $R_M=S_M$, a contradiction.

 (2) We denote by   $ \{R_\alpha \mid \alpha \in I\}$   the family of all  finite subextensions 
$R \subset R_\alpha $  of $R\subset S$  and  set $C_\alpha:= (R:R_\alpha)$.
For  $M \in \mathrm{Spec}(R)$,  observe that $M$ is a crucial ideal of  $R \subset S$    if and only if  $M$ is a crucial ideal of  each $R\subset R_\alpha$. Then it is enough to use the following facts: $\mathrm{Supp}(R_\alpha /R) = \mathrm{V}(C_\alpha)$ and $C = \cap [C_\alpha \mid \alpha \in I]$.  
\end{proof}

We now introduce the notion of a critical ideal of an $R$-module $E$ that generalizes the critical ideal of an extension defined in \cite[p. 1093]{LC}.

\begin{definition} Let $E$ be an $R$-module. We say that an ideal $J$ is critical for $E$ if $J= \sqrt{0:_R x}$ for each $x \in E\setminus \{0\}$. If such an ideal exists, it is unique and is a prime ideal (it is enough to mimic the proof of \cite[Lemma 2.11]{LC}).
\end{definition}

A critical ideal of a ring extension $R \subseteq S$ is a critical ideal of the $R$-module $S/R$. 

The following result is clear.

\begin{lemma}\label{6.3} If an $R$-module $E$ has a critical ideal $J$, then $\mathcal O(E) = J$ is a prime ideal, 
 $\mathrm{Ass}(E)=\{J\}$ and $\mathrm{Supp}(E)=\mathrm{V}(J)$
  is closed. Moreover, if $J\in \mathrm{Max}(R)$, then $J$ is also the crucial ideal of $E$. 
\end{lemma}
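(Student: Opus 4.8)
The plan is to verify the four assertions one at a time, in each case reducing everything to the defining property that $\sqrt{0:_R x} = J$ for every nonzero $x \in E$ and then feeding this into the formulas already established in Section~2. A preliminary observation that I would make explicit is that the existence (and uniqueness) of a critical ideal forces $E \neq 0$, so that $E \setminus \{0\}$ is nonempty; this is the only place where one must guard against the degenerate module.

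First I would compute the Oda ideal. By definition $\mathcal O(E) = \cap[\sqrt{0:_R x} \mid x \in E]$. The element $x = 0$ contributes $\sqrt{0:_R 0} = \sqrt R = R$, which does not affect the intersection, while every $x \in E \setminus \{0\}$ contributes exactly $J$. Hence $\mathcal O(E) = J$, and since a critical ideal is a prime ideal by the definition, $\mathcal O(E)$ is prime.

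Next I would identify the support directly. Recall from Section~2 that $\mathrm{Supp}(E) = \cup[\mathrm V(0:_R x) \mid x \in E]$. Since $\mathrm V(0:_R x) = \mathrm V(\sqrt{0:_R x}) = \mathrm V(J)$ for each nonzero $x$, while $x = 0$ gives $\mathrm V(R) = \emptyset$, the union collapses to $\mathrm V(J)$. Thus $\mathrm{Supp}(E) = \mathrm V(J)$ is Zariski closed, consistently with Theorem~\ref{cars}, which already yields $\overline{\mathrm{Supp}(E)} = \mathrm V(\mathcal O(E)) = \mathrm V(J)$. For the assassinator I would use that $P \in \mathrm{Ass}(E)$ means $P$ is minimal among the primes containing $0:_R x$ for some $x \in E$. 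For $x \neq 0$ these are exactly the primes containing $\sqrt{0:_R x} = J$, and since $J$ is prime the unique minimal such prime is $J$ itself, while $x = 0$ contributes nothing; hence $\mathrm{Ass}(E) = \{J\}$. Finally, if $J \in \mathrm{Max}(R)$, then $\mathrm V(J) = \{J\}$ is a singleton, so $|\mathrm{Supp}(E)| = 1$ and $J$ is, by the definition recalled at the start of the section, the crucial ideal $\mathcal C(E)$.

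I expect no genuine obstacle here: every step is a direct substitution of the critical property into an already-proved formula. The only points requiring attention are the bookkeeping of the trivial element $x = 0$ in both the intersection and the union, and the harmless remark that a critical ideal cannot exist for the zero module, so that all relevant intersections and unions are genuinely indexed over a nonempty set of nonzero elements.
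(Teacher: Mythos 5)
Your proof is correct and is precisely the argument the paper has in mind: the paper states this lemma without proof (``The following result is clear''), and your direct substitution of $\sqrt{0:_R x}=J$ into the formulas $\mathcal O(E)=\cap[\sqrt{0:_Rx}\mid x\in E]$ and $\mathrm{Supp}(E)=\cup[\mathrm V(0:_Rx)\mid x\in E]$, together with the definition of the weak assassinator and of the crucial ideal, is exactly that clear verification. Your additional bookkeeping for $x=0$ and the observation that a critical ideal forces $E\neq 0$ are harmless and indeed welcome refinements of what the paper leaves implicit.
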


\begin{lemma}\label{6.4} Let $E$ be an $R$-module. Then $\mathrm{Ass}(E)$ is a chain if and only if $\sqrt{0:x}$ is a prime ideal for each $x\in E\setminus\{0\}$. If this statement holds, then $\mathrm{Ass}(E)=\{\sqrt{0:x}\mid x\in E\setminus\{0\}\}$. Moreover,  $\mathcal O(E)$ is a prime ideal and the least element of $\mathrm{Ass}(E)$.
\end{lemma}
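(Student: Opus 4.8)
The plan is to establish the equivalence first, extract the description of $\mathrm{Ass}(E)$ along the way, and then treat the assertions about $\mathcal O(E)$. For the easy implication, suppose $\mathrm{Ass}(E)$ is a chain and fix $x\in E\setminus\{0\}$. Since $0:_Rx\neq R$, the primes containing $0:_Rx$ form a nonempty set admitting minimal elements, and by the definition of the weak assassinator each such minimal prime belongs to $\mathrm{Ass}(E)$. Distinct minimal primes over a fixed ideal are incomparable, so a totally ordered $\mathrm{Ass}(E)$ can contain only one of them; as $\sqrt{0:_Rx}$ is the intersection of the minimal primes over $0:_Rx$, it coincides with that single prime and is therefore prime.

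The converse is the crux. Assuming $\sqrt{0:_Rx}$ is prime for every $x\neq 0$, I would show that $\sqrt{0:_Rx}$ and $\sqrt{0:_Ry}$ are comparable for all $x,y\in E\setminus\{0\}$. If $x+y=0$ the two radicals coincide, so assume $x+y\neq 0$; then $\sqrt{0:_R(x+y)}$ is prime. If the radicals were incomparable, choose $a\in\sqrt{0:_Rx}\setminus\sqrt{0:_Ry}$ and $b\in\sqrt{0:_Ry}\setminus\sqrt{0:_Rx}$, and $n,k$ with $a^nx=0=b^ky$. Then $a^nb^k(x+y)=0$, so $a^nb^k\in\sqrt{0:_R(x+y)}$, and primeness places $a$ or $b$ in $\sqrt{0:_R(x+y)}$. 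If $a\in\sqrt{0:_R(x+y)}$, a common power of $a$ annihilates $x+y$ and $x$, hence annihilates $y$, giving $a\in\sqrt{0:_Ry}$, a contradiction; the case of $b$ is symmetric. This sum trick is the key device.

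In parallel I would prove the stated description $\mathrm{Ass}(E)=\{\sqrt{0:_Rx}\mid x\in E\setminus\{0\}\}$: under the hypothesis each $\sqrt{0:_Rx}$ is the unique minimal prime over $0:_Rx$ and so lies in $\mathrm{Ass}(E)$, while conversely every $P\in\mathrm{Ass}(E)$ is minimal over some $0:_Rx$ with $x\neq 0$ and hence equals $\sqrt{0:_Rx}$. Combined with the comparability above, this shows $\mathrm{Ass}(E)$ is a chain, completing the equivalence. For the primeness of $\mathcal O(E)$, recall that $\mathcal O(E)=\cap[\sqrt{0:_Rx}\mid x\in E]=\cap[P\mid P\in\mathrm{Ass}(E)]$; the intersection of a chain of primes is prime, since if $ab$ lies in it while $a\notin P_i$ and $b\notin P_j$ for members $P_i,P_j$, then primeness forces $b\in P_i$ and $a\in P_j$, contradicting comparability of $P_i$ and $P_j$. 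Being an intersection, $\mathcal O(E)$ is contained in every member of $\mathrm{Ass}(E)$.

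The step I expect to require the most care is the word \emph{least}: being a lower bound does not automatically make $\mathcal O(E)$ an element of $\mathrm{Ass}(E)$, since a chain need not attain its infimum. To get a genuine least element it suffices to exhibit one $x_0\neq 0$ with $\sqrt{0:_Rx_0}=\mathcal O(E)$, whence $\mathcal O(E)\in\mathrm{Ass}(E)$ is its minimum; this is immediate whenever the chain $\mathrm{Ass}(E)$ possesses a smallest element, in particular when it is finite. I would therefore either record this as a standing hypothesis or, in the general case, phrase the conclusion as ``$\mathcal O(E)$ is the prime ideal contained in every member of $\mathrm{Ass}(E)$'', the attainment of the infimum being the delicate point.
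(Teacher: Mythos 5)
Your proof of the equivalence and of the description of $\mathrm{Ass}(E)$ is correct and is essentially the paper's own argument. For the direction ``each $\sqrt{0:x}$ prime $\Rightarrow$ chain'' the paper uses the same sum trick $z=x+y$: it first establishes the three inclusions $M\cap N\subseteq P$, $N\cap P\subseteq M$, $P\cap M\subseteq N$ (mimicking the proof of \cite[Proposition 2.14 (3)]{LC}) and then runs a case analysis using primeness; your variant, which picks witnesses $a\in\sqrt{0:x}\setminus\sqrt{0:y}$ and $b\in\sqrt{0:y}\setminus\sqrt{0:x}$ and derives a contradiction from $a^nb^k(x+y)=0$, is the same idea in a more streamlined, self-contained form. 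Your easy direction (a totally ordered $\mathrm{Ass}(E)$ contains at most one minimal prime over each $0:x$, which must then equal $\sqrt{0:x}$) matches the paper's, as does the primeness of $\mathcal O(E)$ as the intersection of a chain of primes.

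Your hesitation over the word \emph{least} is not excessive caution: it exposes a genuine gap in the paper, whose proof simply declares that $\mathrm{Ass}(E)$ is ``a chain whose least element is $\mathcal O(E)$'' with no justification. In fact that clause is false in general. Take $R=k[x_1,x_2,\ldots]$, a polynomial ring in countably many indeterminates, $P_i:=(x_i,x_{i+1},\ldots)$ and $E:=\bigoplus_{i\geq 1}R/P_i$. For $0\neq x\in E$ with finite support $S$ one computes $0:x=\bigcap_{i\in S}P_i=P_{\max S}$, so every $\sqrt{0:x}$ is prime and $\mathrm{Ass}(E)=\{P_i\mid i\geq 1\}$ is a chain, in accordance with the equivalence; but $\mathcal O(E)=\bigcap_{i\geq 1}P_i=0$, which is prime ($R$ is a domain) yet is not any $P_i$, so $\mathcal O(E)\notin\mathrm{Ass}(E)$ and the chain has no least element. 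The correct general conclusion is exactly the one you propose: $\mathcal O(E)$ is a prime ideal, equal to the intersection (infimum) of the chain $\mathrm{Ass}(E)$, and it is the least \emph{element} precisely when that infimum is attained --- for instance when $\mathrm{Ass}(E)$ is finite, which is the situation in the paper's subsequent applications to crucial and critical ideals of FCP extensions. So where your proposal diverges from the paper, it is the paper, not your proof, that needs repair.
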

\begin{proof} Assume first that $\sqrt{0:x}$ is a prime ideal for each $x\in E\setminus\{0\}$. Let $x,y\in E\setminus\{0\}$. Set $z:=x+y,\ M:=\sqrt{0:x},\ N:=\sqrt{0:y}$ and $P:=\sqrt{0:z}$.    If $z=0$, then $y=-x$ gives $M=N$. So we may assume that $z\neq 0$. We claim that $M,N$ and $P$ are not all distinct. Mimicking the proof of \cite[Proposition 2.14 (3)]{LC}, we get that each ideal contains the intersection of the two others. Then we have $M\cap N\subseteq P\ (1),\ N\cap P\subseteq M\ (2)$ and $P\cap M\subseteq N\ (3)$. In particular, (1) leads to either $M\subseteq P\ (*)$ or $N\subseteq P\ (**)$. In case $(*)$, we get $M\subseteq N$ by (3), so that (2) implies to either $N\subseteq M$ and then $M=N$ or $P\subseteq M$ so that $P=M$. Case $(**)$ gives a similar result. Assume  $M\neq N$ and choose $P=M$. By (3), we get $M=P\subseteq N$. If $P=N$, it follows by (2) that $N\subseteq M$. To conclude, $\{\sqrt{0:x}\mid x\in E\setminus\{0\}\}$ is a chain and $\mathcal O(E)$ is a prime ideal as an intersection of a chain of prime ideals. Now, let $Q\in  \mathrm{Spec}(R)$. Then, $Q\in\mathrm{Ass}(E)$ if and only if $Q$ is minimal in the set of prime ideals containing $0:x$ for some $x\in E\setminus\{0\}$, which is equivalent to $Q$ is minimal in the set of prime ideals containing $\sqrt{0:x}$ for some $x\in E\setminus\{0\}$. Since $\sqrt{0:x}$ is a prime ideal for each $x\in E\setminus\{0\}$, we get that $Q=\sqrt{0:x}$ for some $x\in E\setminus\{0\}$, so that $\mathrm{Ass}(E)=\{\sqrt{0:x}\mid x\in E\setminus\{0\}\}$ is a chain whose least element is $\mathcal O(E)$.

Conversely, assume that $\mathrm{Ass}(E)$ is a chain and let $x\in E\setminus\{0\}$. Then, $\sqrt{0:x}$ is the intersection of the elements of $X:=\{P\in\mathrm{Spec}(R)\mid \sqrt{0:x}\subseteq P\}$, and is also the intersection of the minimal elements of $X$.  But these minimal elements are in $\mathrm{Ass}(E)$, which is a chain. Then $\sqrt{0:x}\in \mathrm{Ass}(E)$ and is a prime ideal.
\end{proof}

\begin{corollary}\label{6.4C} Let $R \subset S$ be a ring extension. Then $\mathrm{Ass}(S/R)$ is a chain if and only if $\sqrt{R:x}$ is a prime ideal for each $x\in S\setminus R$, in which case $\mathrm{Ass}(S/R)=\{\sqrt{R:x}\mid x\in S\setminus R\}$. Moreover, $\mathcal O(R,S)$ is a prime ideal and the least element of $\mathrm{Ass}(S/R)$.
\end{corollary}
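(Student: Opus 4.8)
The plan is to obtain this statement as a direct specialization of Lemma~\ref{6.4} applied to the $R$-module $E := S/R$, so the whole task reduces to setting up a faithful dictionary between the module-theoretic statement and the extension-theoretic one. Since $R \subset S$ is a proper extension, $E = S/R$ is a nonzero $R$-module, and by the definition of the Oda ideal of an extension we have $\mathcal{O}(R,S) = \mathcal{O}(S/R)$, so the final ``moreover'' clause will transfer verbatim once the translation of the first part is in place.

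The key step is the identification of data. Every nonzero element of $E = S/R$ is a class $\overline{s} = s + R$ with $s \in S \setminus R$, and conversely each such $s$ gives a nonzero class; thus $s \mapsto \overline{s}$ surjects $S \setminus R$ onto $E \setminus \{0\}$. For such $s$ I compute $0 :_R \overline{s} = \{r \in R \mid r s \in R\} = R :_R s = \mathrm{d}(s)$, in the notation of Remark~\ref{EXA1}, whence $\sqrt{0 :_R \overline{s}} = \sqrt{R :_R s}$. I would also record the (trivial but necessary) well-definedness remark: if $s - s' \in R$ then $r s \in R \iff r s' \in R$, so $R :_R s = R :_R s'$ and the radical depends only on the class $\overline{s}$; this guarantees that the index set $\{\overline{s} \mid \overline{s} \in E \setminus \{0\}\}$ and the index set $\{s \mid s \in S \setminus R\}$ produce exactly the same collection of radical ideals $\sqrt{R :_R x}$.

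With this dictionary, each clause of Lemma~\ref{6.4} translates mechanically. The equivalence ``$\mathrm{Ass}(E)$ is a chain iff $\sqrt{0 :_R x}$ is prime for every $x \in E \setminus \{0\}$'' becomes ``$\mathrm{Ass}(S/R)$ is a chain iff $\sqrt{R :_R x}$ is prime for every $x \in S \setminus R$''; the description $\mathrm{Ass}(E) = \{\sqrt{0 :_R x} \mid x \in E \setminus \{0\}\}$ becomes $\mathrm{Ass}(S/R) = \{\sqrt{R :_R x} \mid x \in S \setminus R\}$; and the statement that $\mathcal{O}(E)$ is prime and the least element of $\mathrm{Ass}(E)$ becomes the corresponding statement for $\mathcal{O}(R,S)$. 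I do not expect a genuine obstacle here: all the real work (the chain argument via the three inclusions $M \cap N \subseteq P$, etc.) was already discharged in Lemma~\ref{6.4}. The only point deserving care is precisely the well-definedness of $\sqrt{R :_R x}$ on classes noted above, ensuring the two index sets coincide so that the translation is lossless rather than merely an inclusion of sets.
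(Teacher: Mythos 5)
Your proposal is correct and matches the paper's intended derivation: the paper states Corollary~\ref{6.4C} as an immediate consequence of Lemma~\ref{6.4} applied to $E = S/R$, exactly as you do, using $\mathcal O(R,S) = \mathcal O(S/R)$ and the identification $0:_R\overline{s} = R:_R s$ for $s \in S\setminus R$. Your explicit check that the radical $\sqrt{R:_R s}$ depends only on the class $\overline{s}$, so that the two index sets yield the same family of ideals, is a careful touch the paper leaves implicit.
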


\begin{proposition}\label{6.5} Let $E$ be an $R$-module. Each of the following statements implies that $E$ has a critical ideal.
\begin{enumerate}

\item $E$ has a crucial ideal $M$ ($M$ is the critical ideal).
\item   $\mathcal O(E)$ is a maximal ideal (the critical ideal is $\mathcal O(E)$).
\item For each $x\in E$, $x\neq 0$, $\sqrt{0:x}$ is a maximal ideal (the critical ideal is $\mathcal O(E)$, which is maximal).

\item For each $x\in E$, $x\neq 0$, $\sqrt{0:x}$ is a prime ideal and $\mathrm{Supp}(E)\subseteq \mathrm{Max}(R)$ (the critical ideal is $\mathcal O(E)$, which is maximal).
\end{enumerate}
\end{proposition}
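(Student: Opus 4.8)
The plan is to reduce all four implications to statement (2), whose proof is direct, and to treat (2) as the core. Throughout I assume $E\neq 0$, as is implicit in the existence of a critical ideal. The guiding principle is the definition itself: $E$ has critical ideal $J$ exactly when $\sqrt{0:_Rx}=J$ for every nonzero $x\in E$. A small but decisive auxiliary observation, used for (3) and (4), is that a chain of maximal ideals (a family of maximal ideals totally ordered by inclusion) collapses to a single ideal, since distinct maximal ideals are incomparable.

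First I would prove (2). Recall that $\mathcal O(E)=\cap[\sqrt{0:_Rx}\mid x\in E]$, so $\mathcal O(E)\subseteq\sqrt{0:_Rx}$ for every $x$. For nonzero $x$ we have $0:_Rx\neq R$, hence $\sqrt{0:_Rx}\neq R$; combined with the hypothesis that $\mathcal O(E)$ is maximal, this squeezes $\sqrt{0:_Rx}=\mathcal O(E)$. Thus $\mathcal O(E)$ is the critical ideal, which proves (2).

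Next, (1) follows immediately: by the discussion preceding the Proposition, possession of a crucial ideal $M$ already yields $\mathcal O(E)=M\in\mathrm{Max}(R)$, so (1) is a special case of (2). For (3) and (4) I would invoke Lemma~\ref{6.4}. In case (3) each $\sqrt{0:_Rx}$ is maximal, hence prime, so by Lemma~\ref{6.4} the set $\mathrm{Ass}(E)=\{\sqrt{0:_Rx}\mid x\in E\setminus\{0\}\}$ is a chain; being a chain of maximal ideals, it is a singleton, and its least element $\mathcal O(E)$ is therefore that maximal ideal. In case (4) each $\sqrt{0:_Rx}$ is prime, so again Lemma~\ref{6.4} makes $\mathrm{Ass}(E)$ a chain; since $\mathrm{Ass}(E)\subseteq\mathrm{Supp}(E)\subseteq\mathrm{Max}(R)$ by Lemma~\ref{ass} together with the hypothesis, this chain consists of maximal ideals and is once more a singleton, whence $\mathcal O(E)$ is maximal. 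In both cases $\mathcal O(E)$ is maximal, so (2) applies and delivers the critical ideal $\mathcal O(E)$.

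The genuine content sits in (2) and in the chain observation; everything else is bookkeeping. The hard part will be handling the edge cases cleanly, namely the contribution of $x=0$ (which gives $\sqrt{0:_R0}=R$ and so does not disturb the intersection defining $\mathcal O(E)$) and the tacit standing assumption $E\neq 0$, as well as correctly matching the hypotheses of Lemma~\ref{6.4}: it is precisely the primeness of every $\sqrt{0:_Rx}$ that is needed to conclude that $\mathrm{Ass}(E)$ is a chain and that $\mathcal O(E)$ is its least element.
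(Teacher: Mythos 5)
Your proof is correct and takes essentially the same route as the paper: the maximality squeeze $\mathcal O(E)\subseteq\sqrt{0:_Rx}\subsetneq R$ for the crucial/maximal cases, and Lemma~\ref{6.4} together with $\mathrm{Ass}(E)\subseteq\mathrm{Supp}(E)$ (Lemma~\ref{ass}) for (3) and (4). The only difference is organizational --- you make (2) the core and reduce (1), (3), (4) to it, whereas the paper proves (1) directly, calls (2) ``similar'', and reduces (4) to (3) --- but the underlying arguments coincide.
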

\begin{proof} (1) Let $M$ be  a crucial ideal of $E$. We have $\mathrm{Supp}(E) =\mathrm V(M)$, because $M$ is a maximal ideal. It follows that $\cap [\sqrt{0:x} \mid x \in E] = M$ and the result follows easily. 

(2) The proof of the second  statement is similar. 

 (3)  Use Lemma \ref {6.4}.

(4) Use (3) and Lemma \ref {6.4} because $\sqrt{0:x}$ is a maximal ideal for each $x\in E$, $x\neq 0$ since $\mathrm{Supp}_R(E) \supseteq \mathrm{Ass}_R(E)$ by Lemma \ref{ass}.
\end{proof}

\begin{corollary}\label{6.7} Let $E$ be an $R$-module and $M\in \mathrm{Max}(R)$. The following conditions are equivalent: 
\begin {enumerate}
\item $M=\mathcal C(E)$; 

\item $M$ is critical for $E$;

\item $M=\mathcal O(E)$.
\end{enumerate}
\end{corollary}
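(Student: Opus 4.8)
The plan is to establish the cyclic chain of implications $(1) \Rightarrow (2) \Rightarrow (3) \Rightarrow (1)$, drawing entirely on Lemma~\ref{6.3} and Proposition~\ref{6.5}. Since $M$ is assumed maximal throughout, I expect each arrow to reduce to a direct citation of one of these preceding results rather than to a fresh computation; the corollary is essentially a packaging of facts already proved.

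For $(1) \Rightarrow (2)$, I would invoke Proposition~\ref{6.5}(1): if $M = \mathcal C(E)$, that is, if $M$ is the crucial ideal of $E$, then $E$ has a critical ideal and that critical ideal is exactly $M$, so $M$ is critical for $E$. For $(2) \Rightarrow (3)$, I would use Lemma~\ref{6.3}: whenever $E$ admits a critical ideal $J$, one has $\mathcal O(E) = J$; applying this with $J = M$ yields $M = \mathcal O(E)$. Finally, for $(3) \Rightarrow (1)$, I would start from $M = \mathcal O(E)$ with $M$ maximal. By Proposition~\ref{6.5}(2), the maximality of $\mathcal O(E)$ forces $E$ to have a critical ideal equal to $\mathcal O(E) = M$. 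The last clause of Lemma~\ref{6.3} then applies: because $M \in \mathrm{Max}(R)$, the critical ideal $M$ is also the crucial ideal of $E$, whence $M = \mathcal C(E)$, closing the cycle.

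The only point requiring attention — and the nearest thing to an obstacle — is bookkeeping: one must check that the hypothesis \emph{$M$ maximal} is genuinely available wherever Proposition~\ref{6.5}(2) and the final clause of Lemma~\ref{6.3} are invoked, since both of those statements trade precisely on maximality to upgrade a critical ideal into a crucial one. No independent estimate or construction is needed beyond the cited results, so I anticipate the proof to be a short three-line deduction.
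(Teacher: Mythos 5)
Your proof is correct and takes essentially the same approach as the paper, whose entire proof is the one-line instruction ``Use Lemma \ref{6.3} and Proposition \ref{6.5}''; your cycle $(1)\Rightarrow(2)\Rightarrow(3)\Rightarrow(1)$ is exactly the intended unpacking of those two results, with Proposition \ref{6.5}(1) giving $(1)\Rightarrow(2)$, Lemma \ref{6.3} giving $(2)\Rightarrow(3)$, and Proposition \ref{6.5}(2) together with the final clause of Lemma \ref{6.3} giving $(3)\Rightarrow(1)$.
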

\begin{proof} Use Lemma \ref{6.3} and Proposition \ref{6.5}.
\end{proof}

\begin{proposition}\label{6.8} An FCP integral extension $R\subset S$ has a critical ideal if and only if $R\subset S$ is an $M$-crucial extension with $M\in\mathrm{Max}(R)$. In this case, $M$ is the critical ideal of the extension and $M=\sqrt{(R:R[x])}$ for each $x\in S\setminus R$.
\end{proposition}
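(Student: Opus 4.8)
The plan is to prove the equivalence by handling each implication with the tools already developed for the $R$-module $E := S/R$, and then to derive the conductor formula by reduction to the monogenic subextensions $R \subset R[x]$.

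For the direction showing that a critical ideal forces $M$-cruciality, I would assume that $E = S/R$ has a critical ideal $J$. By Lemma~\ref{6.3} this gives that $J = \mathcal O(R,S)$ is prime and that $\mathrm{Supp}(S/R) = \mathrm V(J)$. Because the extension is integral and FCP, Example~\ref{example} furnishes the inclusion $\mathrm{Supp}(S/R) \subseteq \mathrm{Max}(R)$. Since $J$ is prime and $J \in \mathrm V(J) = \mathrm{Supp}(S/R)$, the ideal $J$ must be maximal; hence $\mathrm V(J) = \{J\}$ and $R \subset S$ is $J$-crucial with $J \in \mathrm{Max}(R)$. The last assertion of Lemma~\ref{6.3} then confirms that $M := J$ is at once the critical and the crucial ideal. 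For the converse, I would invoke Proposition~\ref{6.5}(1) directly: an $M$-crucial extension with $M$ maximal already has $M$ as critical ideal of $S/R$. This half is formal and needs neither integrality nor FCP.

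It remains to establish $M = \sqrt{(R:R[x])}$ for each $x \in S \setminus R$, which is the step I expect to require the most care. The critical-ideal machinery, through Corollary~\ref{6.4C}, only delivers $M = \sqrt{R:_R x}$ in terms of the denominator ideal, whereas the statement concerns the conductor $(R:R[x])$; these ideals genuinely differ, and one cannot pass between their radicals by a naive termwise computation over the powers of $x$, since some $x^i$ may already lie in $R$. The clean route is to pass to the subextension $R \subset R[x]$. Indeed, $R[x]/R$ is an $R$-submodule of $S/R$, so $\mathrm{Supp}(R[x]/R) \subseteq \mathrm{Supp}(S/R) = \{M\}$, and it is nonempty because $x \notin R$; hence $R \subset R[x]$ is itself $M$-crucial. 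As $R \subset R[x]$ is integral, Proposition~\ref{1.191}(2) then applies and yields $M = \mathcal C(R, R[x]) = \sqrt{(R:R[x])}$, which finishes the argument.
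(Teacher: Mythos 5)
Your proof is correct, and while the equivalence half runs essentially parallel to the paper's (Lemma~\ref{6.3} plus the fact that an integral FCP extension has $\mathrm{Supp}(S/R)\subseteq\mathrm{Max}(R)$, then Corollary~\ref{6.7} or Proposition~\ref{6.5}(1); the paper cites an external lemma of Dobbs--Picavet--Picavet-L'Hermitte where you use Example~\ref{example}, but the content is the same), your treatment of the formula $M=\sqrt{(R:R[x])}$ is genuinely different. The paper works element-by-element: from criticality it has $M=\sqrt{R:x}$, and it then proves $\sqrt{R:x}=\sqrt{(R:R[x])}$ directly, using integrality of $x$ to promote $a^n x\in R$ to $a^p x^r\in R$ for all $r$ (since $R[x]$ is spanned over $R$ by boundedly many powers of $x$). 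You instead descend to the monogenic subextension: $R[x]/R$ is a nonzero submodule of $S/R$, so its support is a nonempty subset of $\{M\}$, whence $R\subset R[x]$ is $M$-crucial, and Proposition~\ref{1.191}(2) applied to this integral extension gives $M=\sqrt{(R:R[x])}$ at once. Your route is shorter and reuses earlier machinery, making transparent that the formula is really the statement that each $R\subset R[x]$ inherits cruciality; its cost is that it leans on Proposition~\ref{1.191}(2), whose proof in the paper is itself rather terse (a reduction to finite subextensions), whereas the paper's computation here is self-contained and in fact establishes the stronger, cruciality-free fact that $\sqrt{R:x}=\sqrt{(R:R[x])}$ for any element $x$ integral over $R$. (One quibble: your aside that the two radicals cannot be compared by a termwise computation over powers of $x$ is overstated --- that is precisely what the paper does, and the computation goes through because integrality bounds the powers needed; the identity just fails without integrality.)
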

\begin{proof} Assume that $R \subset S$ has a critical ideal $M$. According to Lemma \ref{6.3}, we have $\mathrm{Supp}_R(S/R):=\mathrm{V}(M)$ and $M$ is a prime ideal. But, $\mathrm{Supp}_R(S/R)\subseteq\mathrm{Max}(R)$ by \cite[Lemma 3.3]{DPP2}. Then $M\in\mathrm{Max}(R)$ and Corollary \ref{6.7} gives the equivalence. If these conditions hold, then $M=\sqrt{R:x}$ for each $x\in S\setminus R$. Let $x\in S\setminus R$. Clearly, $(R:R[x])\subseteq R:x$, which leads to $\sqrt{(R:R[x])}\subseteq\sqrt{R:x}$. For any $a\in \sqrt{R:x}$, there exists some $n\in\mathbb N$ such that $a^n\in R:x$, so that $a^nx\in R$. As $x$ is integral over $R$, there exists some $p\in\mathbb N$ such that $a^px^r\in R$ for any $r\in\mathbb N$. Then, $a^p\in(R:R[x])$, giving $a\in\sqrt{(R:R[x])}$ and $ \sqrt{R:x}\subseteq\sqrt{(R:R[x])}$. To conclude, $M=\sqrt{(R:R[x])}$ for each $x\in S\setminus R$. 
\end{proof} 

For a ring extension $R\subseteq S$, we denote by $[R,S]$ the set of subextensions of $R\subseteq S$ and by $\overline R$ the integral closure of $R$ in $S$. The length $\ell[R,S]$ of $[R,S]$ is the supremum of the lengths of chains of $R$-subalgebras of $S$. Note that if $R\subseteq S$ has FCP, then there does exist some maximal chain of $R$-subalgebras of $S$ with length $\ell[R,S]$ \cite[Theorem 4.11]{DPP3}.
Recall that an extension $R\subseteq S$ is Pr\"ufer if $R\subseteq T$ is a flat epimorphism for each $T\in[R,S]$.

\begin{corollary}\label{6.9} Let $R\subset S$ be an FCP extension. Then $R \subset S$ has a critical ideal $M$ if and only if $R\subset S$ is an $M$-crucial extension with $M\in\mathrm{Max}(R)$. If these conditions hold, then either $\overline R=S$ or $\overline R\subset S$ is a Pr\"ufer extension of length $|\mathrm{Supp}(S/\overline R)|\leq |\mathrm{V}_{\overline R}(M\overline R)|$. In this last case, $\overline R\subset S$ is locally minimal.
\end{corollary}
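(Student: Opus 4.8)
The plan is to treat the biconditional of the first assertion separately from the structural description, reducing each part to tools already in hand: Corollary~\ref{6.7}, Lemma~\ref{6.3}, Lemma~\ref{ass}, Example~\ref{example}, and the standard factorization of an FCP extension through its integral closure. For the equivalence, the implication from ``$M$-crucial with $M\in\mathrm{Max}(R)$'' to ``$M$ is critical'' is immediate from Corollary~\ref{6.7}, which for a maximal ideal identifies the crucial, critical and Oda conditions; so the real content is the reverse implication, and here I would deliberately avoid any appeal to Proposition~\ref{6.8}, since $R\subset S$ need not be integral.

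For that reverse implication I would argue uniformly as follows. Since $R\subset S$ has FCP and the existence of a critical ideal forces $S/R\neq 0$, there is a minimal subextension $R\subset R_1$. By Example~\ref{example} its crucial ideal $M_0$ is a maximal ideal of $R$ with $\mathrm{Supp}_R(R_1/R)=\{M_0\}$, whence $\mathrm{Ass}_R(R_1/R)=\{M_0\}$ by Lemma~\ref{ass}. Because $R_1/R$ is an $R$-submodule of $S/R$, the very definition of the weak assassinator gives $\mathrm{Ass}_R(R_1/R)\subseteq\mathrm{Ass}_R(S/R)$; and Lemma~\ref{6.3} yields $\mathrm{Ass}_R(S/R)=\{M\}$ because $M$ is critical. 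Hence $M=M_0\in\mathrm{Max}(R)$, and a second use of Lemma~\ref{6.3} (a maximal critical ideal is the crucial ideal) shows that $R\subset S$ is $M$-crucial. This settles the equivalence without any case split on integrality.

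For the structural statement I would invoke the factorization of an FCP extension through its integral closure: $R\subseteq\overline R$ is integral (hence finite) with FCP, and $\overline R\subseteq S$ is a Pr\"ufer extension with FCP. If $\overline R=S$ we are in the first alternative. Otherwise $\overline R\subset S$ is a nontrivial Pr\"ufer FCP extension, whose crucial ideals are the maximal ideals of $\overline R$ occurring in $\mathrm{Supp}_{\overline R}(S/\overline R)$. Since $S/\overline R$ is a quotient of $S/R$, we have $\mathrm{Supp}_R(S/\overline R)\subseteq\mathrm{Supp}_R(S/R)=\mathrm V(M)$ by Lemma~\ref{6.3}; contracting each crucial ideal $N$ to $R$ gives $M\subseteq N\cap R$, that is $N\in\mathrm V_{\overline R}(M\overline R)$. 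Thus $\mathrm{Supp}_{\overline R}(S/\overline R)\subseteq\mathrm V_{\overline R}(M\overline R)$, which already furnishes the cardinality bound $|\mathrm{Supp}(S/\overline R)|\leq|\mathrm V_{\overline R}(M\overline R)|$. To identify $\ell[\overline R,S]$ with $|\mathrm{Supp}(S/\overline R)|$ and to obtain local minimality, I would localize $\overline R\subset S$ at each of its finitely many crucial maximal ideals: away from $\mathrm V_{\overline R}(M\overline R)$ the localization is trivial by the support computation above, and at each support point one shows the localized extension is a minimal flat epimorphism, so each support point contributes exactly one step to a maximal chain.

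The main obstacle is precisely this last step. The passage from ``Pr\"ufer FCP with support concentrated over $M$'' to ``locally minimal of length equal to $|\mathrm{Supp}|$'' is where the real difficulty lies, because the critical-ideal hypothesis on the $R$-module $S/R$ does not transfer verbatim to an analogous hypothesis on the $\overline R$-module $S/\overline R$ (the annihilators $\overline R:_{\overline R}s$ live over a different base). One therefore cannot simply reapply the equivalence just proved to $\overline R\subset S$, and must instead draw on the structure theory of Pr\"ufer FCP extensions---their lattices of subextensions being governed locally by the crucial maximal ideals and distributive away from integrality---together with the fact that all these crucial ideals lie in $\mathrm V_{\overline R}(M\overline R)$. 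By contrast, the equivalence and the support/length inequality are comparatively routine once Example~\ref{example}, Lemma~\ref{ass}, Lemma~\ref{6.3}, Corollary~\ref{6.7} and the integral/Pr\"ufer factorization are available.
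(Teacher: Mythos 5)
Your proof of the equivalence is correct and takes a genuinely different route from the paper's. The paper restricts the critical ideal to the subextension $R\subseteq\overline R$ and invokes Proposition~\ref{6.8} (the integral FCP case) to conclude $M\in\mathrm{Max}(R)$; you instead pick a minimal subextension $R\subset R_1$ (which exists because $[R,S]$ is Artinian), use Example~\ref{example} and Lemma~\ref{ass} to get $\mathrm{Ass}_R(R_1/R)=\{M_0\}$ with $M_0$ maximal, and compare with $\mathrm{Ass}_R(S/R)=\{M\}$ coming from Lemma~\ref{6.3}. Your argument is uniform in a way the paper's is not: it also covers the case $\overline R=R$ (i.e. $R\subset S$ Pr\"ufer FCP), where the paper's reduction to $R\subseteq\overline R$ says nothing since $\overline R/R=0$ has no critical ideal. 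The paper's argument is shorter, but only because Proposition~\ref{6.8} has already done the work in the integral case; your minimal-subextension argument is a legitimate, and arguably more robust, alternative.

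For the structural statement, however, your proposal has a genuine gap. You correctly derive $\mathrm{Supp}_{\overline R}(S/\overline R)\subseteq\mathrm V_{\overline R}(M\overline R)$ (essentially as the paper does, via the quotient module $S/\overline R$ and contraction of support primes), and this gives the cardinality bound. But the two remaining claims --- the equality $\ell[\overline R,S]=|\mathrm{Supp}_{\overline R}(S/\overline R)|$ and the local minimality of $\overline R\subset S$ --- are exactly what you leave unproved, deferring to an unspecified ``structure theory of Pr\"ufer FCP extensions.'' You are right that one cannot reapply the equivalence just proved to $\overline R\subset S$ (its support generally has several elements, so there is no critical ideal), and your localization-at-support-primes strategy is the correct one in spirit; but as written you never show that $\overline R_N\subset S_N$ is minimal for each $N$ in the support, nor that these local contributions produce a maximal chain of length $|\mathrm{Supp}_{\overline R}(S/\overline R)|$. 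The paper closes precisely this step by citing \cite[Proposition 6.12]{DPP2}, which asserts, for an FCP Pr\"ufer extension, that the length equals the cardinality of the support and that the extension is minimal Pr\"ufer after localization at each support prime. So your outline matches the paper's for this half, but the decisive ingredient is a known result that you would need either to cite or to reprove, and the proposal does neither.
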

\begin{proof} Assume that $R\subset S$ has a critical ideal $M$. Then $M$ is also the critical ideal of $ R\subseteq\overline R $. By Proposition \ref{6.8}, we get that $M\in  \mathrm{Max}(R)$. Then, $R\subset S$ is an $M$-crucial extension by Corollary \ref{6.7}, which gives also the converse. 

Assume that these conditions hold and that $\overline R\neq S$. Since $R\subset S$ has FCP, $\mathrm{V}_{\overline R}(M\overline R)$ is a finite subset $\{M_1,\ldots,M_n\}$ of $\mathrm{Max}(\overline R)$. Moreover, $\mathrm{Supp}_R(S/R)=\mathrm V_R(M)$ implies that $\mathrm{Supp}_{\overline R}(S/\overline R)\subseteq\mathrm V_{\overline R}(M\overline R)\subseteq\mathrm{Max}(\overline R)$. We may assume that $\mathrm{Supp}_{\overline R}(S/\overline R)=\{M_1,\ldots,M_p\}$ with $p\leq n$, after a reordering. According to \cite[Proposition 6.12]{DPP2}, it follows that $\ell[\overline R,S]=|\mathrm{Supp}_{\overline R}(S/\overline R)|=p\leq |\mathrm{V}_{\overline R}(M\overline R)|$. In fact, $\overline R\subset S$ is locally minimal because $\overline R_{M_i}\subset S_{M_i}$ is minimal Pr\"ufer for any $i\in\{1,\ldots,p\}$ by the same reference.
\end{proof} 

\begin{remark}\label{6.10} A critical ideal needs not to be the crucial ideal of an extension. Let $R$ be an integral domain which is not a field and $S:=R[X]$ the polynomial ring in the indeterminate $X$. Of course, for any $P(X)\in S\setminus R$, we have $(R:P(X))=0=\sqrt{(R:P(X))}\in\mathrm{Spec}(R)$, so that $0$ is the critical and the Oda ideal of the extension while it is not the crucial ideal of the extension since $\mathrm{Supp}(S/R)=\mathrm{Spec}(R)\subset \mathrm{Max}(R)$.
\end{remark}

Given a ring $R$, its Nagata ring $R(X)$ is the localization $R(X):=T^{-1}R[X]$ of the ring of polynomials $R[X]$ with respect to the multiplicatively closed subset $T$ of all polynomials with content $R$.
We compute the Oda ideal and also the crucial ideal of a Nagata extension $R(X)\subset S(X)$ when $R\subset S $ is $M$-crucial. Recall that if $R\subset S$ is integral, then $S(X)\cong R(X)\otimes_R S$ \cite[Lemma 3.1]{DPP3}. The same property holds if $R\subset S$ is a flat epimorphism since the surjective map $R(X)\otimes_R S\to S(X)$ is injective. Indeed, $R(X)\to R(X)\otimes_R S$ is a flat epimorphism and $R(X)\to R(X)\otimes_R S\to S(X)$ is injective \cite[Scholium A(3)]{QP}.

\begin{lemma}\label{6.11} Let $R\subset S$ be an $M$-crucial extension such that $S(X) \cong R(X)\bigotimes_RS$ (for example, if $R\subset S$ is integral or a flat epimorphism). Then $R(X) \subset S(X)$ is $MR(X)$-crucial.  
\end{lemma}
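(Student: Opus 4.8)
The plan is to transport the crucial-ideal information from $R\subset S$ to $R(X)\subset S(X)$ through the flat base change $g\colon R\to R(X)$, and then to read off the conclusion from the well-understood prime spectrum of a Nagata ring. First I would record that $g\colon R\to R(X)$ is a (faithfully) flat ring morphism; in particular, applying $-\otimes_R R(X)$ to the exact sequence $0\to R\to S\to S/R\to 0$ yields an exact sequence $0\to R(X)\to R(X)\otimes_R S\to R(X)\otimes_R(S/R)\to 0$. Using the hypothesis $S(X)\cong R(X)\otimes_R S$ (and the fact, recalled before the statement, that the natural surjection $R(X)\otimes_R S\to S(X)$ restricts to the inclusion $R(X)\hookrightarrow S(X)$), this identifies the $R(X)$-module $S(X)/R(X)$ with $(S/R)\otimes_R R(X)$. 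Flatness also guarantees that $R(X)\subset S(X)$ is a genuine, proper extension, since $S/R\neq 0$ forces $(S/R)\otimes_R R(X)\neq 0$.

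With this identification in hand, I would apply Proposition~\ref{flat} to the $R$-module $E:=S/R$ and the flat morphism $g$. Since $R\subset S$ is $M$-crucial, $\mathrm{Supp}_R(S/R)=\{M\}=\mathrm V(M)$ is Zariski closed, so Proposition~\ref{flat} gives
\[
\mathrm{Supp}_{R(X)}\bigl(S(X)/R(X)\bigr)={}^ag^{-1}\bigl(\mathrm{Supp}_R(S/R)\bigr)={}^ag^{-1}(\mathrm V(M))=\mathrm V(MR(X)),
\]
the last equality being the formula ${}^af^{-1}(\mathrm V(I))=\mathrm V(IT)$ recalled just before Proposition~\ref{flat}. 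Thus it only remains to show that $\mathrm V(MR(X))$ consists of the single maximal ideal $MR(X)$.

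This last point is where the special structure of the Nagata ring enters, and it is the only step requiring input beyond the general machinery already developed. I would invoke the standard description of $\mathrm{Spec}(R(X))$: the assignment $P\mapsto PR(X)$ is an inclusion-preserving bijection $\mathrm{Spec}(R)\to\mathrm{Spec}(R(X))$ carrying maximal ideals to maximal ideals and satisfying $PR(X)\cap R=P$. Consequently $MR(X)$ is a maximal ideal of $R(X)$, and for any $Q=PR(X)\in\mathrm V(MR(X))$ one has $M=MR(X)\cap R\subseteq PR(X)\cap R=P$, forcing $P=M$ by maximality of $M$; hence $\mathrm V(MR(X))=\{MR(X)\}$. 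Combined with the displayed computation, this shows $\mathrm{Supp}_{R(X)}(S(X)/R(X))=\{MR(X)\}$ with $MR(X)\in\mathrm{Max}(R(X))$, i.e. $R(X)\subset S(X)$ is $MR(X)$-crucial. The main (and essentially only) obstacle is justifying the prime-ideal correspondence for Nagata rings; everything else is a direct application of Proposition~\ref{flat} through flat base change.
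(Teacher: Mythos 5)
Your overall strategy is the same as the paper's: transport the support along the faithfully flat base change $g\colon R\to R(X)$, and then conclude from the maximality of $MR(X)$ in $R(X)$. The only organizational difference is that you invoke the paper's own Proposition~\ref{flat} to get the equality $\mathrm{Supp}_{R(X)}(S(X)/R(X))={}^ag^{-1}(\mathrm{Supp}_R(S/R))=\mathrm V(MR(X))$, whereas the paper cites an external result of [DPP3] giving only the inclusion $\mathrm{Supp}_{R(X)}(S(X)/R(X))\subseteq({}^ag)^{-1}(\mathrm{Supp}_R(S/R))$ and then pins down the fiber by hand; your identification of $S(X)/R(X)$ with $(S/R)\otimes_R R(X)$ and your remark that faithful flatness forces this module to be nonzero (so the extension is proper) are correct and make explicit points the paper leaves implicit.

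However, the justification you give for the final step is false as stated. The assignment $P\mapsto PR(X)$ is injective, inclusion-preserving, satisfies $PR(X)\cap R=P$, and induces a bijection $\mathrm{Max}(R)\to\mathrm{Max}(R(X))$, but it is \emph{not} a bijection $\mathrm{Spec}(R)\to\mathrm{Spec}(R(X))$ in general: non-extended primes of $R[X]$ can survive in $R(X)$. For instance, take $R=k[[x,y]]$ with maximal ideal $M=(x,y)$ and quotient field $K$, and let $Q=(xX-y)K[X]\cap R[X]$. If $g=\sum_{i=0}^n c_iX^i\in R[X]$ satisfies $g(y/x)=0$, then $\sum_i c_iy^ix^{n-i}=0$, and comparing degree-$n$ homogeneous parts forces every $c_i\in M$; hence $Q$ contains no polynomial of unit content, so $QR(X)$ is a prime of $R(X)$ that contracts to $(0)$ in $R$ yet is nonzero, and therefore is not of the form $PR(X)$. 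Fortunately your argument does not need surjectivity of the correspondence: all that is required is that $MR(X)\in\mathrm{Max}(R(X))$, since then trivially $\mathrm V(MR(X))=\{MR(X)\}$. That weaker fact is standard (for example because $R(X)/MR(X)\cong (R/M)(X)$, and the Nagata ring of a field is its rational function field), and it is exactly the fact the paper's own proof invokes. With that substitution your proof is correct and essentially coincides with the paper's.
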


\begin{proof} The extension $g:R\to R(X)$ is faithfully flat and $\mathrm{Supp}(S/R)=\{M\}$. Let $Q\in\mathrm{Supp}_{R(X)}(S(X)/R(X))$. Applying \cite[Proposition 2.4 (b)]{DPP3}, we get that $Q\in ({}^{a}g)^{-1}(\mathrm{Supp}(S/R))$, so that ${}^{a}g(Q)=Q\cap R\in\mathrm{Supp}_R(S/R)=\{M\}$, giving $M=Q\cap R$. It follows that $M\subseteq Q$, which implies $MR(X)\subseteq Q$ and then $Q=MR(X)$ since $MR(X)\in\mathrm{Max}(R(X))$. Therefore,  $\mathrm{Supp}_{R(X)}(S(X)/R(X))=\{MR(X)\}$.
\end{proof}

\begin{corollary}\label{6.12} Let $R\subset S$ be an $M$-crucial extension such that $S(X) \cong R(X)\bigotimes_RS$. Then $MR(X)=\mathcal O(R(X),S(X))$ and is the critical ideal of $R(X) \subset S(X)$.
\end{corollary}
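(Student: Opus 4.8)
The plan is to recognize both equalities as formal consequences of the crucial-ideal statement already obtained in Lemma~\ref{6.11}, fed through the equivalences of Corollary~\ref{6.7}. Throughout I regard $S(X)/R(X)$ as an $R(X)$-module, so that $\mathcal O(R(X),S(X)) = \mathcal O_{R(X)}(S(X)/R(X))$ and the critical ideal of the extension $R(X)\subset S(X)$ is, by definition, a critical ideal of this $R(X)$-module.

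First I would invoke Lemma~\ref{6.11}: since $R\subset S$ is $M$-crucial and $S(X)\cong R(X)\otimes_RS$, the extension $R(X)\subset S(X)$ is $MR(X)$-crucial. In particular $\mathrm{Supp}_{R(X)}(S(X)/R(X)) = \{MR(X)\}$ with $MR(X)\in\mathrm{Max}(R(X))$, so that $MR(X)$ is the crucial ideal $\mathcal C(R(X),S(X))$ of the $R(X)$-module $S(X)/R(X)$.

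Next I would read off both assertions from Corollary~\ref{6.7} applied to the $R(X)$-module $E:=S(X)/R(X)$ and the maximal ideal $MR(X)$. Condition (1), namely $MR(X)=\mathcal C(E)$, holds by the previous step. The equivalence (1)$\Leftrightarrow$(3) then yields $MR(X)=\mathcal O_{R(X)}(E)=\mathcal O(R(X),S(X))$, which is the first desired equality; this can equally be obtained from the observation, made just after the definition of a crucial ideal, that a module with crucial ideal $M$ satisfies $\mathcal O(E)=M$. The equivalence (1)$\Leftrightarrow$(2) shows that $MR(X)$ is critical for $E$, that is, it is the critical ideal of $R(X)\subset S(X)$, which is the second assertion.

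Since everything reduces to Lemma~\ref{6.11} and Corollary~\ref{6.7}, there is no genuine obstacle beyond checking that the hypotheses of Lemma~\ref{6.11} are precisely the standing hypotheses of the corollary, in particular that $S(X)\cong R(X)\otimes_RS$ is assumed, and that $MR(X)$ is maximal in $R(X)$. The latter is the standard description of the maximal ideals of a Nagata ring and is already built into Lemma~\ref{6.11}, so the corollary is essentially a repackaging of that lemma together with the formal equivalences of Corollary~\ref{6.7}.
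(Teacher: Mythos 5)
Your proposal is correct and is exactly the paper's argument: the paper's proof simply says ``Use Corollary \ref{6.7}'', with Lemma \ref{6.11} (stated immediately before, under the same hypotheses) supplying the fact that $R(X)\subset S(X)$ is $MR(X)$-crucial, after which the equivalences of Corollary \ref{6.7} yield both the Oda-ideal equality and the critical-ideal statement. Your write-up just makes these implicit steps explicit.
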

\begin{proof} Use Corollary \ref{6.7}.
\end{proof}

\begin{lemma}\label{6.13} Let $R\subset S$ be a ring extension with conductor $C$. Then $(R(X):S(X))=CR(X)$.
\end{lemma}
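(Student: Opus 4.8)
The plan is to prove the two inclusions $CR(X)\subseteq (R(X):S(X))$ and $(R(X):S(X))\subseteq CR(X)$ separately, with the content of polynomials as the main tool. For $f\in S[X]$ let $\mathrm c_S(f)$ denote the ideal of $S$ generated by the coefficients of $f$, and similarly $\mathrm c_R(f)$ for $f\in R[X]$; recall that $T=\{g\in R[X]\mid \mathrm c_R(g)=R\}$ and $T_S=\{g\in S[X]\mid \mathrm c_S(g)=S\}$ are the multiplicative sets defining $R(X)$ and $S(X)$, that each $g\in T$ is a unit of $R(X)$, and that $T\subseteq T_S$ so $R(X)\subseteq S(X)$. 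I shall use freely that $fR(X)=\mathrm c_R(f)R(X)$ for $f\in R[X]$, that $IR(X)\cap R=I$ for every ideal $I$ of $R$, and that a polynomial of unit content is a non-zerodivisor on $N[X]$ for every $R$-module $N$ (a form of McCoy's theorem, a consequence of the Dedekind--Mertens content formula). Finally, since $C$ is the greatest ideal common to $R$ and $S$, one has $CS[X]=C[X]$ (the polynomials with coefficients in $C$) and the elementary identity $(R[X]:S[X])=C[X]$.

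A preliminary remark settles the inclusion $\subseteq$: I claim $R(X)\cap S[X]=R[X]$. Indeed, if $\theta\in S[X]$ is written $\theta=f/g$ with $f\in R[X]$, $g\in T$, then $g\theta=f\in R[X]$, so the image $\bar\theta$ of $\theta$ in $(S/R)[X]$ satisfies $g\bar\theta=0$; as $\mathrm c_R(g)=R$, McCoy's theorem forces $\bar\theta=0$, i.e. $\theta\in R[X]$. Now take $\theta\in(R(X):S(X))$ and write $\theta=f/g$ with $f\in R[X]$, $g\in T$. Since $g$ is a unit of $R(X)$, we get $fS(X)=g\theta\,S(X)\subseteq R(X)$; in particular $fs\in R(X)\cap S[X]=R[X]$ for every $s\in S$, so each coefficient $f_i$ of $f$ satisfies $f_is\in R$ for all $s\in S$, that is $f_i\in(R:S)=C$. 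Hence $f\in C[X]\subseteq CR(X)$ and $\theta=f/g\in CR(X)$.

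For the reverse inclusion, writing $CR(X)=\{h/g\mid h\in C[X],\ g\in T\}$ and using that $g$ is a unit, it suffices to show $hS(X)\subseteq R(X)$ for $h\in C[X]$; since $hS[X]\subseteq C[X]$, this comes down to the key point that $h/v\in R(X)$ for $h\in C[X]$ and $v\in T_S$. The heart of the matter is the identity $C\subseteq \mathrm c_R(v)$, valid whenever $v\in R[X]$ has $\mathrm c_S(v)=\mathrm c_R(v)S=S$: writing $1=\sum_k i_k\sigma_k$ with $i_k\in\mathrm c_R(v)$ and $\sigma_k\in S$, every $c\in C$ gives $c=\sum_k i_k(c\sigma_k)$ with $c\sigma_k\in cS\subseteq R$, whence $c\in\mathrm c_R(v)$. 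Granting $v\in R[X]$, this yields $\mathrm c_R(h)\subseteq C\subseteq\mathrm c_R(v)$, so $h\in\mathrm c_R(v)[X]\subseteq\mathrm c_R(v)R(X)=vR(X)$, and dividing by the unit $v$ of $S(X)$ gives $h/v\in R(X)$, as desired.

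The genuine obstacle is the passage from denominators $v\in R[X]$ to arbitrary $v\in T_S$, whose coefficients may lie in $S\setminus R$; this is exactly the discrepancy between $S(X)$ and $R(X)\otimes_R S$ recorded by the comparison map $R(X)\otimes_R S\to S(X)$, which in general is neither injective nor surjective. Whenever that map is an isomorphism --- notably when $R\subseteq S$ is integral or a flat epimorphism, the situations relevant to the surrounding results --- the inclusion $\supseteq$ is transparent, since then $cS(X)=c\,(R(X)\otimes_R S)\subseteq R(X)\otimes_R R=R(X)$ for $c\in C$. In the fully general case one must treat the $S$-coefficient denominators head on, exploiting that $C$ is an ideal of $S$ (so that $cv\in R[X]$ for $c\in C$ and $hS[X]\subseteq C[X]$) together with the content identity above; organizing this content bookkeeping, keeping $\mathrm c_R$ and $\mathrm c_S$ aligned, is where I expect the main difficulty to lie.
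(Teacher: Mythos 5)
Your proof of the inclusion $(R(X):S(X))\subseteq CR(X)$ is correct, and it is essentially the paper's own argument in a cleaner package: the paper fixes $\theta=P/Q$ in the conductor, writes $sUP=V\in R[X]$ with $c(U)=R$ for each $s\in S$, and deduces from the Dedekind--Mertens equality $c(UP)=c(P)$ that every coefficient of $P$ lies in $(R:S)$; your identity $R(X)\cap S[X]=R[X]$, proved by McCoy's theorem for the $R$-module $(S/R)[X]$, encodes exactly the same content computation. (One small correction to your final paragraph: the comparison map $R(X)\otimes_RS\to S(X)$ is \emph{always} injective, because $R(X)\otimes_RS\cong T^{-1}S[X]$ and elements of $T_S$ are non-zerodivisors on $S[X]$; only surjectivity is at issue.)

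As for the inclusion you could not establish in general, $CR(X)\subseteq(R(X):S(X))$, equivalently $CS(X)\subseteq R(X)$: the paper dismisses it with the word ``Obviously'', but your instinct that denominators $v\in T_S$ with coefficients in $S\setminus R$ are a genuine obstruction is correct in the strongest sense --- the inclusion is \emph{false} for arbitrary extensions, so the gap you flagged cannot be closed and is in fact a gap in the lemma itself. Take $k$ a field, $S=k[x,y]$, $R=k+yS$, so that $y\in C$, and let $v:=1+xX\in T_S$. If $y\in(R(X):S(X))$, then $y/v=f/g$ for some $f\in R[X]$ and $g\in R[X]$ with $c_R(g)=R$; since $S[X]$ is a domain, this forces $yg=(1+xX)f$ in $S[X]$, and evaluating at $X=-1/x$ in $k(x,y)$ gives $g(-1/x)=0$, i.e. $\sum_{j=0}^n(-1)^jg_jx^{n-j}=0$ after multiplying by $x^n$, where $g=\sum_{j=0}^ng_jX^j$. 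Writing $g_j=\alpha_j+yh_j$ with $\alpha_j\in k$, $h_j\in S$, and using $k[x]\cap yS=0$ to isolate the $y$-free part of this relation, we get $\sum_{j=0}^n(-1)^j\alpha_jx^{n-j}=0$, hence every $\alpha_j=0$, hence $c_R(g)\subseteq yS\subsetneq R$, a contradiction. Thus $y\in CR(X)$ while $yS(X)\not\subseteq R(X)$, so for this extension $(R(X):S(X))\subsetneq CR(X)$.

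The correct statement is therefore the one you actually proved: when the (automatically injective) map $R(X)\otimes_RS\to S(X)$ is surjective --- for instance when $R\subset S$ is integral or a flat epimorphism, which are exactly the hypotheses under which the neighbouring results (Lemma~\ref{6.11}, Corollary~\ref{6.12}) operate --- then $cS(X)\subseteq R(X)$ for $c\in C$ is immediate and the equality of the lemma holds; in full generality only the inclusion $(R(X):S(X))\subseteq CR(X)$ survives. Nothing downstream is affected, since Proposition~\ref{6.14} and Corollary~\ref{6.15} rest on Proposition~\ref{flat} and the results of \cite{DPP3} rather than on this lemma.
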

\begin{proof} Obviously, $CR(X)\subseteq (R(X):S(X))$. Conversely, let $P(X)/Q(X)$

\noindent $\in(R(X):S(X))$, with $P(X),Q(X)\in R[X]$ where the content of $Q(X)$ is $c(Q)=R$. Set $P(X):=\sum_{i=0}^na_iX^{i},\ a_i\in R$ for each $i\in\{0,\ldots,n\}$. For any $s\in S$, we have $(P(X)/1)(s/1)\in R(X)$, so that there exist $U(X),V(X)\in R[X]$ with $c(U)=R$ such that $sU(X)P(X)=V(X)\in R[X]$. In particular, $c(UP)=c(P)$ by the content formula. Set $U(X)P(X):=\sum_{j=0}^pb_jX^{j},\ b_i\in R$ for each $j\in\{0,\ldots,p\}$. Then, $c(P)=\sum_{i=0}^nRa_i=c(UP)=\sum_{j=0}^pRb_j$. But, for any $j\in\{0,\ldots,p\}$, we have $sb_j\in R$ leading to $sa_i\in\sum_{j=0}^pRsb_j\subseteq R$ for each $i\in\{0,\ldots,n\}$, so that $a_i\in (R:S)=C$ for each $i\in\{0,\ldots,n\}$ and $P(X)\in CR[X]$. At last $P(X)/Q(X)\in CR(X)$, giving the wanted equality. 
\end{proof}

\begin{proposition}\label{6.14}  Let $R\subset S$ be an FCP  extension. Then 

\centerline{$\mathcal O_{R(X)}(R(X),S(X))=\sqrt{\mathcal O_R(R,S)R(X))}$.}
\end{proposition}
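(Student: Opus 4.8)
The plan is to reduce the statement to the base-change result Proposition~\ref{flat}, applied to the flat (indeed faithfully flat) morphism $g\colon R\to R(X)$ and to the $R$-module $E:=S/R$. Since $R\subset S$ has FCP, the support $\mathrm{Supp}_R(S/R)$ is a finite closed subset of $\mathrm{Spec}(R)$ (as recalled after Proposition~\ref{algfin} and in Example~\ref{example}). Proposition~\ref{flat} then gives at once
$$\mathcal O_{R(X)}\big((S/R)\otimes_R R(X)\big)=\sqrt{\mathcal O_R(S/R)\,R(X)}=\sqrt{\mathcal O_R(R,S)\,R(X)}.$$
Thus everything comes down to identifying the $R(X)$-module $(S/R)\otimes_R R(X)$ with $S(X)/R(X)$; once this is done the proof is complete.

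The identification rests on the ring isomorphism $S(X)\cong R(X)\otimes_R S$, which I would establish by splitting the extension along its integral closure. Writing $\overline R$ for the integral closure of $R$ in $S$, the FCP hypothesis gives that $R\subseteq\overline R$ is finite (an integral FCP extension is module finite) while $\overline R\subseteq S$ is Pr\"ufer, in particular a flat epimorphism (see \cite{DPP3}). For the finite extension $R\subseteq\overline R$ one has $\overline R(X)\cong R(X)\otimes_R\overline R$, and for the flat epimorphism $\overline R\subseteq S$ one has $S(X)\cong\overline R(X)\otimes_{\overline R}S$, both recalled before Lemma~\ref{6.11}. Chaining these through associativity of the tensor product,
$$S(X)\cong\overline R(X)\otimes_{\overline R}S\cong\big(R(X)\otimes_R\overline R\big)\otimes_{\overline R}S\cong R(X)\otimes_R S,$$
compatibly with the structural maps out of $R(X)$.

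With the isomorphism $S(X)\cong R(X)\otimes_R S$ in hand, flatness of $R(X)$ over $R$ applied to $0\to R\to S\to S/R\to 0$ yields $0\to R(X)\to S\otimes_R R(X)\to (S/R)\otimes_R R(X)\to 0$, so that $(S/R)\otimes_R R(X)\cong\big(S\otimes_R R(X)\big)/R(X)\cong S(X)/R(X)$, which is exactly what is needed. Alternatively, one can finish through the conductor: the same isomorphism shows $S(X)$ is of finite type over $R(X)$, whence Proposition~\ref{algfin} gives $\mathcal O_{R(X)}(R(X),S(X))=\sqrt{(R(X):S(X))}$, while Lemma~\ref{6.13} evaluates $(R(X):S(X))=(R:S)R(X)$; since $\mathcal O_R(R,S)=\sqrt{(R:S)}$ by Proposition~\ref{algfin} and $\sqrt{\sqrt I\,R(X)}=\sqrt{I R(X)}$ for any ideal $I$, both members equal $\sqrt{(R:S)R(X)}$.

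The main obstacle is the isomorphism $S(X)\cong R(X)\otimes_R S$ of the second paragraph: the two available instances of the comparison map live over different base rings ($R$ for the integral step, $\overline R$ for the Pr\"ufer step), so the delicate point is to verify that the FCP structure decomposition $R\subseteq\overline R\subseteq S$ (finite, then Pr\"ufer) genuinely applies and that chaining the two isomorphisms via tensor associativity respects the canonical maps, so that the resulting quotient really is $S(X)/R(X)$ rather than merely an abstract isomorph.
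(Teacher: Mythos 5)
Your proof is correct, and its main line coincides with the paper's: both reduce the statement to Proposition~\ref{flat} applied to the (faithfully) flat morphism $R\to R(X)$ and the module $E=S/R$, using that FCP forces $\mathrm{Supp}_R(S/R)$ to be finite and Zariski closed, together with the isomorphism $S(X)\cong R(X)\otimes_R S$. The differences lie in the supporting steps. The paper gets the isomorphism by citing \cite[Corollary 3.5]{DPP3} directly for FCP extensions, and also invokes \cite[Theorem 3.9]{DPP3} and Proposition~\ref{PROCO} to get closedness of \emph{both} supports, whereas you correctly note that Proposition~\ref{flat} only needs closedness of the source support, and you rederive the isomorphism by hand: you factor the FCP extension through the integral closure as $R\subseteq\overline R\subseteq S$ (finite, then Pr\"ufer, hence a flat epimorphism) and chain the two special cases recalled before Lemma~\ref{6.11}; since all the maps involved are the canonical ones, the composite is indeed the canonical map $R(X)\otimes_R S\to S(X)$, so this chaining is sound (its only cost is importing the quasi-Pr\"ufer structure theorem for FCP extensions, which is really from \cite{DPP2} and \cite{QP} rather than \cite{DPP3}). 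Your explicit identification $(S/R)\otimes_R R(X)\cong S(X)/R(X)$ via flatness of $R(X)$ is a step the paper leaves implicit. Finally, your alternative ending is a genuinely different finish that the paper does not use for this proposition: since an FCP extension is an algebra of finite type, Proposition~\ref{algfin} gives $\mathcal O_R(R,S)=\sqrt{(R:S)}$ and $\mathcal O_{R(X)}(R(X),S(X))=\sqrt{(R(X):S(X))}$, and Lemma~\ref{6.13} then yields $\sqrt{(R(X):S(X))}=\sqrt{(R:S)R(X)}=\sqrt{\mathcal O_R(R,S)R(X)}$; this bypasses Proposition~\ref{flat} and the closedness discussion entirely, and is arguably the most economical route given that Lemma~\ref{6.13} immediately precedes the proposition.
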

\begin{proof} Since $R\subset S$ has FCP, so has $R(X)\subset S(X)$ by \cite[Theorem 3.9]{DPP3}. Then, $\mathrm{Supp}_R(S/R)$ and $\mathrm{Supp}_{R(X)}(S(X)/R(X))$ are finite, so that they are Zariski closed by Proposition \ref{PROCO}. Moreover, $R\to R(X)$ is flat and $S(X) \cong R(X)\bigotimes_RS$ by \cite[Corollary 3.5]{DPP3}. It follows from Proposition \ref{flat} that $\mathcal O_{R(X)}(R(X),S(X))=\sqrt{\mathcal O_R(R,S)R(X)}$.
\end{proof}

\begin{corollary}\label{6.15} Let $R\subset S$ be an integral FCP extension. Then 
\centerline{$\mathcal O_{R(X)}(R(X),S(X))=\mathcal O_R(R,S)R(X)$.}
\end{corollary}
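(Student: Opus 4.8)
The plan is to deduce the equality from Proposition \ref{6.14}, whose conclusion for the FCP extension $R\subset S$ already reads $\mathcal O_{R(X)}(R(X),S(X))=\sqrt{\mathcal O_R(R,S)R(X)}$. Everything therefore reduces to showing that, under the additional hypothesis that $R\subset S$ is integral, the extended ideal $\mathcal O_R(R,S)R(X)$ is \emph{already} a radical ideal of $R(X)$, so that the radical sign may be dropped.

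To exploit integrality I would invoke Example \ref{example}. Since $R\subset S$ is integral and has FCP, there is a finite maximal chain in $[R,S]$, and the example shows that $\sqrt{(R:S)}=\mathcal O_R(R,S)=\mathfrak m_1\cap\cdots\cap\mathfrak m_n$ is a finite intersection of maximal ideals $\mathfrak m_i$ of $R$. This is precisely where integrality (and not merely FCP) enters: for a non-integral FCP extension $\mathcal O_R(R,S)$ is still radical, but it need not be an intersection of \emph{maximal} ideals.

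Next I would transport this description to the Nagata ring. Because $R\to R(X)$ is (faithfully) flat, extension of ideals commutes with finite intersections, so $\mathcal O_R(R,S)R(X)=\mathfrak m_1R(X)\cap\cdots\cap\mathfrak m_nR(X)$. Each $\mathfrak m_iR(X)$ is a maximal ideal of $R(X)$ — the standard property of Nagata rings already used in Lemma \ref{6.11}, which follows from the isomorphism $R(X)/\mathfrak m_iR(X)\cong\kappa(\mathfrak m_i)(X)$ onto a rational function field over $\kappa(\mathfrak m_i)$. A finite intersection of maximal, hence radical, ideals is radical, so $\mathcal O_R(R,S)R(X)$ coincides with its own radical.

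Combining the two steps gives $\mathcal O_{R(X)}(R(X),S(X))=\sqrt{\mathcal O_R(R,S)R(X)}=\mathcal O_R(R,S)R(X)$, which is the claim. The only points needing a little care are the commutation of ideal extension with the finite intersection and the maximality of the ideals $\mathfrak m_iR(X)$; both are routine consequences of flatness and of the well-known structure of $\mathrm{Spec}(R(X))$, so I do not expect a genuine obstacle here.
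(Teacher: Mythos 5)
Your proof is correct, but it takes a genuinely different route from the paper's. You deduce the corollary from Proposition \ref{6.14} and then argue that the radical sign can be dropped because $\mathcal O_R(R,S)R(X)$ is already radical: by Example \ref{example} it is the extension of a finite intersection of maximal ideals $M_1\cap\cdots\cap M_n$, flatness of $R\to R(X)$ lets you commute extension with the finite intersection, and each $M_iR(X)$ is maximal in $R(X)$ (the fact already used in the proof of Lemma \ref{6.11}); all three steps are sound. The paper never invokes Proposition \ref{6.14}. Instead it applies Example \ref{example} \emph{twice}: once to $R\subset S$, and once to $R(X)\subset S(X)$, which is again an integral FCP extension by a transfer theorem of \cite{DPP3}; the support of the Nagata extension is identified as $\{M_1R(X),\ldots,M_nR(X)\}$, and the bridge between the two sides is the comaximality trick $\cap_{i=1}^n M_i=\prod_{i=1}^n M_i$: extension of ideals commutes with \emph{products} for any ring morphism, so $\bigl(\prod M_i\bigr)R(X)=\prod\bigl(M_iR(X)\bigr)=\cap\bigl(M_iR(X)\bigr)=\mathcal O_{R(X)}(R(X),S(X))$. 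The trade-off is this: your argument is a minimal increment on the immediately preceding proposition, needing only the flatness fact $(I\cap J)R(X)=IR(X)\cap JR(X)$ and the maximality of $MR(X)$, whereas the paper's argument is a self-contained computation that sidesteps flatness entirely (via the product trick) but pays for it by having to re-establish that $R(X)\subset S(X)$ is integral FCP and to identify its support explicitly, which requires two further citations to \cite{DPP3}. Both approaches share Example \ref{example} as the essential structural input.
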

\begin{proof} We use the results of Example \ref{example}. Since $R\subset S$ is an integral FCP extension, we get that $\mathrm{Supp}_R(S/R)=\{M_1,\ldots,M_n\}\subseteq \mathrm{Max}(R)$ and $\mathcal O_R(R,S)=\cap_{i=1}^nM_i=\prod_{i=1}^nM_i$. Since $R(X)\subset S(X)$ is also an integral FCP extension by \cite[Theorem 3.4]{DPP3}, it follows that $\mathrm{Supp}_{R(X)}(S(X)/R(X))=\{M_1R(X),\ldots,M_nR(X)\}\subseteq \mathrm{Max}(R(X))$ and $\mathcal O_{R(X)}(R(X),S(X))=\cap_{i=1}^n(M_iR(X))=\prod_{i=1}^n(M_iR(X))=$

\noindent $(\prod_{i=1}^nM_i)R(X)=\mathcal O_R(R,S)R(X)$ according to \cite[Lemma 3.3]{DPP3}. 
\end{proof}

\end{document}